\newtheorem{theorem}{Theorem}
\newtheorem{prop}{Proposition}
\newtheorem*{lemma}{Lemma}
\newtheorem{remark}{Remark}
\newtheorem{claim}{Claim}
\newtheorem*{definition}{Definition}
\newtheorem{cor}{Corollary}
\numberwithin{equation}{section}
\def\Xint#1{\mathchoice
  {\XXint\displaystyle\textstyle{#1}}%
  {\XXint\textstyle\scriptstyle{#1}}%
  {\XXint\scriptstyle\scriptscriptstyle{#1}}%
  {\XXint\scriptscriptstyle\scriptscriptstyle{#1}}%
  \!\int}
\def\XXint#1#2#3{{\setbox0=\hbox{$#1{#2#3}{\int}$}
  \vcenter{\hbox{$#2#3$}}\kern-.5\wd0}}
\def\dashint{\Xint-}
\author{Gang Liu}
\address{Department of Mathematics\\University of Minnesota\\Minneapolis, MN 55455}
\email{liuxx895@math.umn.edu}
\title[K\"ahler manifolds with Ricci curvature lower bound]{K\"ahler manifolds with Ricci curvature lower bound}
\date{}
\begin{document}
\maketitle
\begin{abstract}On K\"ahler manifolds with Ricci curvature bounded from below, we establish some theorems which are counterparts of some classical theorems in Riemannian geometry, for example, Bishop-Gromov's relative volume comparison, Bonnet-Meyers theorem, and Yau's gradient estimate for positive harmonic functions. The tool is a Bochner type formula reflecting the K\"ahler structure.

\end{abstract}

\bigskip
\bigskip

\section{\bf{Introduction}}

In this paper we study some geometric quantities on K\"ahler manifolds when the Ricci curvature has a lower bound. Our point of view is from Riemannian geometry. To distinguish from the Riemannian case, we derive a Bochner type formula reflecting the K\"ahler structure. One of the main results is the following:

\begin{theorem}\label{thm1}
{Let $M^m$($m >1$) be a complete K\"ahler manifold with $Ric \geq (2m-1)k$($k \neq 0$) and denote $B_x(r)$ to be the geodesic ball in $M$ centered at $x$ with radius $r$. Let $N$ be the $2m$ dimensional simply connected real space form with sectional curvature $k$ and denote $B_N(r)$ to be the geodesic ball in $N$ with radius $r$.  For any point $p \in M$ and constants $0<c<a<b$, there exists a constant $\epsilon = \epsilon(b, a, m, k)>0$ so that the area of the geodesic spheres satisfies $$\frac{A(\partial B_p(b))}{A(\partial B_p(a))} \leq \frac{A(\partial B_N(b)) }{A(\partial B_N(a)) }(1 - \epsilon).$$ Furthermore, if $k = -1$, then $\epsilon$ depends only on $c, b-a, m$.}
\end{theorem}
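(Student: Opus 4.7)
The strategy is to upgrade Bishop-Gromov by deriving a pointwise improvement of the radial Laplacian comparison for $r(\cdot) = d(p,\cdot)$ using the K\"ahler Bochner formula announced in the introduction. The Riemannian hypothesis $\mathrm{Ric} \geq (2m-1)k$ already gives the Riccati inequality $(\Delta r)'(s) \leq -(\Delta r)(s)^2/(2m-1) - (2m-1)k$, which integrates to the non-strict Bishop-Gromov ratio. My plan is to show that the K\"ahler structure forces an additional non-negative defect $\psi(s)$ on the right-hand side, and then to convert the integral of $\psi$ over $[a,b]$ into a multiplicative $(1-\epsilon)$ gap.

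To locate the defect, note that because $J$ is parallel, the unit vector $J\partial_r$ is parallel along every radial geodesic, so the single entry $\mathrm{Hess}(r)(J\partial_r, J\partial_r)$ satisfies its own scalar Riccati equation in which only the sectional curvature $K(\partial_r, J\partial_r)$ of the complex line appears, rather than an averaged quantity. Equality in the Riemannian Riccati step would require $\mathrm{Hess}(r)$ to be a scalar multiple of the identity on $\partial_r^\perp$, but the K\"ahler ODE locks the $J\partial_r$ eigenvalue to a value determined only by the single holomorphic sectional curvature along $\gamma$; combining this with the usual Riccati sum over the remaining $2m-2$ totally real directions should produce
\[
(\Delta r)'(s) \leq -\frac{(\Delta r)(s)^2}{2m-1} - (2m-1)k - \psi(s), \qquad \psi \geq 0,
\]
with $\psi$ bounded below by a quantitative positive constant once $s$ is bounded away from $0$, i.e., for $s \geq c$. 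A standard Riccati comparison with the model equation on the real space form $N$ then yields $\Delta r(q) \leq \Delta_N r(q) - \eta$ pointwise on the annulus $\{a \leq r \leq b\}$ (off the cut locus), with $\eta = \eta(a,b,m,k) > 0$.

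With the improved Laplacian comparison in hand, I would invoke the first variation of area
\[
\frac{d}{dr} A(\partial B_p(r)) = \int_{\partial B_p(r)} \Delta r \, dA,
\]
extended across the cut locus in the usual distributional way (which only strengthens the inequality), to deduce $\frac{d}{dr}\log\bigl(A(\partial B_p(r))/A(\partial B_N(r))\bigr) \leq -\tilde\eta$ on $[a,b]$, and then integrate and exponentiate. The main obstacle is the quantitative lower bound on $\psi$: the Ricci hypothesis alone does not prevent $\mathrm{Hess}(r)$ from being nearly scalar on $\partial_r^\perp$ at any single point, so the K\"ahler defect must be harvested by integrating the mismatch between the radial K\"ahler Riccati and the Riemannian one over a geodesic segment of definite length, which is exactly what the auxiliary parameter $c$ is for. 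In the case $k=-1$, the translation invariance of the hyperbolic model in the radial variable lets one rephrase the estimate in terms of $c$, $b-a$, and $m$ alone.
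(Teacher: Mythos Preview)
Your plan has a genuine gap at the step where you extract a quantitative K\"ahler defect $\psi$. The scalar you isolate, $h(s) := \mathrm{Hess}(r)(J\partial_r, J\partial_r)$, does \emph{not} satisfy a closed scalar Riccati equation: differentiating along $\gamma$ gives $h' = -|S(J\partial_r)|^2 - K(\partial_r, J\partial_r)$, which involves the full row $S(J\partial_r)$ and the \emph{holomorphic} sectional curvature of the complex line through $\gamma'$. Neither is controlled by the Ricci lower bound alone. Nothing prevents $\mathrm{Hess}(r)$ from being exactly $\coth(r)$ times the identity on $\partial_r^\perp$ along an entire geodesic segment of length $b-a$; one just needs the curvature operator $R(\cdot,\partial_r)\partial_r$ to be scalar there, and that is compatible with $\mathrm{Ric}\geq (2m-1)k$. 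So integrating along a single geodesic from $c$ cannot produce a uniform $\psi>0$, and the pointwise improvement $\Delta r \leq \Delta_N r - \eta$ you aim for is false in general.

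The paper does not attempt a pointwise or geodesic-wise Laplacian improvement. It argues by contradiction: if the area ratio were within $\epsilon$ of the model, then a Cheeger--Colding type argument (their Proposition~3) produces, on a sub-annulus $T$, a smooth harmonic-type function $w$ with $|\nabla w|$ bounded and $\dashint_T(|\nabla w - \nabla r|^2 + |\nabla^2 w - S|^2)$ small, where $S$ is the model Hessian $\coth(r)\cdot(\mathrm{id}-dr\otimes dr)$. One then applies the K\"ahler Bochner identity (Proposition~1)
\[
\tfrac12\langle \nabla w,\nabla(\textstyle\sum_{\gamma\neq 1} w_{\gamma\overline\gamma})\rangle = w_{1\overline1}\Delta w - |w_{\alpha\overline\beta}|^2 + \mathrm{Re}\,\mathrm{div}\,Y
\]
to $w$, multiplies by a cutoff, and integrates over $T$ so that the divergence term is handled. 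The point is purely algebraic: substituting the \emph{real} space-form values $S_{\alpha\overline\beta}$ into this identity leaves a residual of exactly $\tfrac{m-1}{2}$, not zero, because the real-hyperbolic Hessian is incompatible with the K\"ahler identity. That fixed algebraic gap, not any curvature splitting along $\gamma$, is what contradicts the assumed near-equality. The divergence term $\mathrm{div}\,Y$ is essential and is why the argument must be integral over the whole annulus rather than along one geodesic; this is the piece your outline is missing.
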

\begin{remark}{When the bisectional curvature is bounded from below, P. Li and J. Wang \cite{[LW]} proved the sharp version of theorem \ref{thm1} comparing with the complex space forms . However, if we only assume the Ricci curvature has a positive lower bound, one cannot expect a sharp estimate of theorem \ref{thm1} comparing with the complex space forms. The example will be given in section 5. }
\end{remark}

 Theorem \ref{thm1} has several corollaries:
\begin{cor}\label{cor1}
{ Using the same notation as in theorem \ref{thm1}, we have $$\frac{Vol(B_p(b))}{Vol(B_p(a))} \leq \frac{Vol(B_N(b)) }{Vol(B_N(a))}(1 - \epsilon)$$ where $\epsilon = \epsilon(b, a, m, k)>0$. If $k = -1$, $\epsilon$ depends only on $b-a , c, m$.}
\end{cor}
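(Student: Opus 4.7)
The plan is to deduce the volume comparison from the area comparison in Theorem~\ref{thm1} by integration in $r$, combined with the standard (Riemannian) Bishop--Gromov monotonicity. Set $f(r) = A(\partial B_p(r))$, $g(r) = A(\partial B_N(r))$, $F(r) = Vol(B_p(r))$, $G(r) = Vol(B_N(r))$, $\phi = f/g$, and $\psi = F/G$. Since $M$ has real dimension $2m$ with $Ric \geq (2m-1)k$, the classical comparison gives that $\phi$ and $\psi$ are non-increasing in $r$; in particular $\phi(r) \leq \psi(r)$, because $\psi(r)$ is the $g$-weighted average of $\phi$ over $[0,r]$.

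The key step is to apply Theorem~\ref{thm1} not to the pair $(a,b)$ but to the intermediate pair $(a,(a+b)/2)$. This produces an $\epsilon_1 > 0$ (depending on $(a,b,m,k)$, and in the case $k=-1$ on $(c,b-a,m)$ only) such that $\phi((a+b)/2) \leq (1-\epsilon_1)\phi(a)$. Combined with the monotonicity of $\phi$, one gets the pointwise pinch $\phi(s) \leq (1-\epsilon_1)\phi(a)$ for $s \in [(a+b)/2, b]$, while $\phi(s) \leq \phi(a)$ on $[a,(a+b)/2]$. Splitting $\int_a^b f(s)\,ds$ at the midpoint and using $f = \phi g$ then yields
\[
F(b) - F(a) \;\leq\; \phi(a)\,(1 - \epsilon_1 \eta)\,(G(b) - G(a)),
\]
where $\eta = (G(b) - G((a+b)/2))/(G(b) - G(a)) \in (0,1)$ is a purely model-space quantity. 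Bounding $\phi(a) \leq \psi(a) = F(a)/G(a)$ from the first paragraph and a short rearrangement give
\[
\frac{F(b)}{F(a)} \;\leq\; \frac{G(b)}{G(a)}(1-\epsilon), \qquad \epsilon = \epsilon_1\,\eta\,(1 - G(a)/G(b)) > 0,
\]
which is the desired inequality.

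The only point needing care is the parameter dependence of $\epsilon$. In general, $\epsilon_1$ depends on $(a,b,m,k)$ by Theorem~\ref{thm1}, while $\eta$ and $G(a)/G(b)$ depend on $(a,b,m,k)$ explicitly through the model. When $k=-1$, Theorem~\ref{thm1} gives that $\epsilon_1$ depends only on $(c,b-a,m)$; and a short compactness/monotonicity argument using the explicit formula $g(s) = c_m \sinh(s)^{2m-1}$, applied under the constraint $a \geq c$ with $b-a$ fixed, shows that both $\eta$ and $1 - G(a)/G(b)$ are bounded below by positive quantities depending only on $(c, b-a, m)$. This bookkeeping is the only anticipated obstacle; all the substantive geometric input is Theorem~\ref{thm1}.
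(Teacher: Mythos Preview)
Your proof is correct and follows essentially the same strategy as the paper: apply Theorem~\ref{thm1} to the intermediate pair $(a,(a+b)/2)$ and combine with Bishop--Gromov monotonicity to pass from area ratios to volume ratios. The paper carries this out by contradiction (assuming the volume ratio is nearly maximal and deriving that the area ratio at the midpoint is also nearly maximal, contradicting Theorem~\ref{thm1}), whereas you argue directly and obtain an explicit formula for $\epsilon$; your version is somewhat cleaner but the substantive content is the same.
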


\begin{definition}{ Let $(M^n, g)$ be a complete Riemannian manifold. Choose a point $p \in M$, define the volume entropy of $M$ to be $h(M, g) = \overline{\lim\limits_{r \to + \infty}} \frac{\ln Vol(B_p(r))}{r}$ where $B_p(r)$ is the geodesic ball in $M$ centered at $p$ with radius $r$. }
\end{definition}
\begin{cor}\label{cor2}
{ Let $M^m$($m >1$) be a complete K\"ahler manifold with $Ric \geq -(2m-1)$,  then the volume entropy $h(M)$ satisfies $$h(M) \leq 2m-1-\epsilon$$ where $\epsilon$ is a positive constant depending only on $m$.}
\end{cor}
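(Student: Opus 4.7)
The plan is to iterate the volume-ratio improvement of Corollary~\ref{cor1} on unit-length annuli at large radius, exploiting the uniformity of $\epsilon$ in the $k=-1$ case to promote the multiplicative factor $(1-\epsilon)$ into an exponentially small correction to the ambient entropy $2m-1$ of $\mathbb H^{2m}$.

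Concretely, I would fix $c=1$ and apply Corollary~\ref{cor1} with $k=-1$, $a=j$, $b=j+1$ for each integer $j\ge 2$. Since both $c$ and $b-a$ are fixed, the constant $\epsilon$ reduces to a single $\epsilon_0=\epsilon_0(m)>0$, and one obtains
\[
\frac{\operatorname{Vol}(B_p(j+1))}{\operatorname{Vol}(B_p(j))}\le\frac{\operatorname{Vol}(B_N(j+1))}{\operatorname{Vol}(B_N(j))}\,(1-\epsilon_0).
\]
Telescoping from $j=2$ up to $j=n-1$ gives
\[
\operatorname{Vol}(B_p(n))\le\operatorname{Vol}(B_p(2))\cdot\frac{\operatorname{Vol}(B_N(n))}{\operatorname{Vol}(B_N(2))}\cdot(1-\epsilon_0)^{n-2}.
\]
Combined with the standard estimate $\operatorname{Vol}(B_N(n))\le C_m e^{(2m-1)n}$ in $\mathbb H^{2m}$, taking logarithms and dividing by $n$ produces
\[
\frac{\ln\operatorname{Vol}(B_p(n))}{n}\le(2m-1)+\ln(1-\epsilon_0)+o(1).
\]

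To extend from integers to real $r$, I would use the trivial monotonicity $\operatorname{Vol}(B_p(r))\le\operatorname{Vol}(B_p(\lceil r\rceil))$ and pass to the $\limsup$, concluding $h(M)\le(2m-1)-\delta$ with $\delta:=-\ln(1-\epsilon_0)>0$ depending only on $m$.

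The only delicate point is that a single $\epsilon_0$ must work for every annulus $(j,j+1)$, which is precisely what the second clause of Corollary~\ref{cor1} guarantees when $k=-1$: with $c$ and $b-a$ fixed, $\epsilon$ depends only on $m$. Without this uniformity, the telescoped product $\prod_j(1-\epsilon_j)$ could fail to decay exponentially and one would recover only the classical Bishop--Gromov bound $h(M)\le 2m-1$. So the genuine obstacle is already absorbed into Theorem~\ref{thm1}, and the remainder of the argument is bookkeeping in the spirit of how the Riemannian volume entropy is read off from relative volume comparison.
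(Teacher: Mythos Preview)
Your proposal is correct and follows essentially the same approach as the paper: both apply Corollary~\ref{cor1} on unit-length annuli $(j,j+1)$, invoke the uniformity clause in the $k=-1$ case to obtain a single $\epsilon_0(m)$, telescope the resulting volume ratios, and take logarithms. The only cosmetic differences are that the paper starts the iteration at $i=1$ (with some $c<1$) rather than $j=2$, and does not spell out the passage from integer to real radii; your treatment of these points is slightly more explicit but the argument is the same.
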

\begin{cor}\label{cor3}
{ Let $M^m$($m >1$) be a complete K\"ahler manifold with $Ric \geq (2m-1)$,  then the diameter $d(M)$ satisfies $$d(M) \leq \pi-\epsilon$$ where $\epsilon$ is a positive constant  depending only on $m$.}
\end{cor}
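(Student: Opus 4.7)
The plan is to combine Corollary \ref{cor1} with the ``two disjoint balls'' argument behind the Bishop-Gromov proof of Bonnet-Myers. First, since $M$ has real dimension $2m$ and $Ric \geq 2m-1$, the classical Riemannian Bonnet-Myers theorem already gives $D := d(M) \leq \pi$, so it suffices to improve this to $D \leq \pi - \epsilon(m)$.

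Choose $p, q \in M$ with $d(p,q) = D$. By the triangle inequality, any point of $B_p(D/2) \cap B_q(D/2)$ would contradict $d(p,q) = D$, so these open balls are disjoint; on the other hand $B_p(D)$ exhausts $M$ up to a set of measure zero, and likewise for $q$. Hence
\[
Vol(B_p(D/2)) + Vol(B_q(D/2)) \leq Vol(M) = Vol(B_p(D)) = Vol(B_q(D)).
\]
Apply Corollary \ref{cor1} with $k = 1$, $a = D/2$, $b = D$ (any $c \in (0, D/2)$ works) at both $p$ and $q$, writing $N = S^{2m}$; this gives
\[
Vol(B_p(D/2)) \geq \frac{1}{1-\epsilon}\,\frac{Vol(B_N(D/2))}{Vol(B_N(D))}\,Vol(M),
\]
together with the analogous estimate at $q$. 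Summing and cancelling $Vol(M)$,
\[
\frac{Vol(B_N(D/2))}{Vol(B_N(D))} \leq \frac{1-\epsilon}{2}.
\]

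The left-hand side depends only on $D$ and $m$, is continuous in $D$, and equals $1/2$ at $D = \pi$ by the symmetry of the sphere. Hence $\epsilon > 0$ already forces $D < \pi$, and to promote this to a quantitative bound $D \leq \pi - \epsilon(m)$ it suffices to know that $\epsilon(D, D/2, m, 1)$ has a positive lower bound, uniform in $D$, on a neighborhood of $\pi$. This uniformity is the main obstacle: as stated, Theorem \ref{thm1} yields $\epsilon > 0$ only for each fixed quadruple $(a, b, m, k)$, whereas in the above argument both $a = D/2$ and $b = D$ vary with $D$. The expected resolution is that the $\epsilon$ constructed in the proof of Theorem \ref{thm1} depends continuously (or at least lower semicontinuously) on $(a,b)$, so that $\inf_{D \in [\pi/2, \pi]} \epsilon(D, D/2, m, 1) \geq \epsilon_0(m) > 0$; combined with the continuity of $D \mapsto Vol(B_N(D/2))/Vol(B_N(D))$ at $D = \pi$, this produces the required $c(m) > 0$ with $D \leq \pi - c(m)$. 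Failing that, one can argue by contradiction: a sequence of Kähler manifolds with $Ric \geq 2m-1$ and $d \to \pi$ would Gromov-Hausdorff subconverge to a maximal-diameter limit on which Corollary \ref{cor1}, applied at fixed radii $0 < a < b < \pi$ and passed to the limit, would yield the forbidden equality in the sphere comparison.
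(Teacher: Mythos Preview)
Your approach is essentially identical to the paper's: both pick $p,q$ realizing the diameter, use the disjoint halves $B_p(D/2)$, $B_q(D/2)$ together with Corollary~\ref{cor1} at $a=D/2$, $b=D$ to obtain $2(1+\epsilon)\dfrac{Vol(B_{S^{2m}}(D/2))}{Vol(B_{S^{2m}}(D))}\le 1$, and then conclude since the ratio on the sphere tends to $1/2$ as $D\to\pi$. The uniformity concern you raise is legitimate but is simply glossed over in the paper's own proof; your proposed resolution via continuity of $\epsilon(a,b,m,k)$ in $(a,b)$ is the intended reading.
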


\begin{cor}\label{cor4}
{ Under the same assumption as in theorem \ref{thm1}, let $\lambda_1$ be the first eigenvalue of the Laplacian with Dirichlet boundary condition, then we have $$\lambda_1(B_p(r)) \leq \lambda_1( B_{N}( r)) - \epsilon$$ where $\epsilon$ is a positive constant depending only on $m$, $k$ and $r$.}
\end{cor}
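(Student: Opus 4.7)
The approach is Cheng's Dirichlet eigenvalue comparison, sharpened by the strict area estimate of Theorem~\ref{thm1}.

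Let $u\colon [0, r] \to [0, \infty)$ be the radial first Dirichlet eigenfunction on $B_N(r)$ with eigenvalue $\lambda := \lambda_1(B_N(r))$, so $u(r) = 0$, $u'(0) = 0$, $u' \leq 0$ on $[0,r]$, and $(A_N u')' = -\lambda A_N u$ where $A_N(s) := A(\partial B_N(s))$. Using the Lipschitz test function $\phi(x) := u(d(p, x))$, which vanishes on $\partial B_p(r)$, the variational principle and the coarea formula give
\[
\lambda_1(B_p(r)) \leq \frac{\int_0^r u'(s)^2\, A(\partial B_p(s))\, ds}{\int_0^r u(s)^2\, A(\partial B_p(s))\, ds}.
\]

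Write $A(\partial B_p(s)) = \mu(s) A_N(s)$, so that by Bishop--Gromov $\mu$ is non-increasing with $\mu(0^+) = 1$ and $\mu \leq 1$. Multiplying the ODE $(A_N u')' = -\lambda A_N u$ by $u \mu$ and integrating by parts on $[0, r]$ (the boundary terms vanish because $u(r) = 0$ and $A_N(0) = 0$) yields the identity
\[
\lambda_1(B_p(r)) - \lambda \;\leq\; -\,\frac{\int_0^r A_N(s)\, (-u u'(s))\, (-\mu'(s))\, ds}{\int_0^r u(s)^2\, A_N(s)\, \mu(s)\, ds},
\]
whose right-hand side is non-positive since all of $-uu'$, $-\mu'$ (as a Stieltjes measure), $A_N$, and $u^2 \mu$ are non-negative. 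Fix $0 < a < b < r$ as explicit fractions of $r$. The function $A_N(s)(-u u'(s))$ is continuous and strictly positive on $(0, r)$, hence bounded below on $[a, b]$ by a constant $c_0 = c_0(m, k, r) > 0$, so the numerator above is at least
\[
c_0 \int_a^b (-\mu'(s))\, ds \;=\; c_0 (\mu(a) - \mu(b)) \;\geq\; c_0\, \epsilon_0\, \mu(a),
\]
where $\epsilon_0 = \epsilon_0(m, k, r) > 0$ is furnished by Theorem~\ref{thm1}.

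The main obstacle is converting this into a gap of the form $\lambda_1(B_p(r)) \leq \lambda - \epsilon$ with $\epsilon$ depending only on $m, k, r$, because a priori $\mu(a)$ could be small when the manifold is very thin at scale $a$. The key observation is that the Rayleigh quotient defining $R$ is invariant under multiplying $\mu$ by any positive constant, so after normalizing (say, so that $\sup_{[0,r]} \mu = 1$) a short case analysis, according to whether $\mu$ remains comparable to $1$ throughout $[0, r]$ (in which case the denominator is controlled above by an explicit constant and the above estimate closes directly) or drops substantially already inside $(0, r)$ (in which case Theorem~\ref{thm1} applied to a suitable subinterval furnishes an even larger drop), trades the factor $\mu(a)$ against the typical scale of $\mu$ appearing in the denominator and yields the desired $\epsilon = \epsilon(m, k, r) > 0$.
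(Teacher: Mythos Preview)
Your approach via Cheng's transplanted eigenfunction and the explicit Rayleigh-quotient identity
\[
\lambda_1(B_p(r)) \le \lambda - \frac{\int_0^r A_N(s)\,u(s)(-u'(s))\,(-d\mu(s))}{\int_0^r u(s)^2 A_N(s)\,\mu(s)\,ds}
\]
is correct and elegant; the paper also starts from Cheng's test function but organizes the endgame differently (by contradiction, reducing to a pinching of $A(r/2)/V(r/2)$ against the model, which then contradicts Theorem~\ref{thm1}).

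The genuine gap is your last paragraph. The scale-invariance observation does not help: since $\mu(0^+)=1$ already, normalizing $\sup\mu=1$ changes nothing, and if instead you rescale so that $\mu(a)=1$ you lose the bound $\mu\le 1$ on $[0,a]$, so the denominator is again uncontrolled. The case analysis you describe (``$\mu$ comparable to $1$ throughout'' versus ``Theorem~\ref{thm1} on a subinterval gives an even larger drop'') is too vague: Theorem~\ref{thm1} only bounds the \emph{ratio} $\mu(b')/\mu(a')$, and in the bad case the drop of $\mu$ may occur entirely near $s=0$ where $N(s)=A_N(s)u(s)(-u'(s))$ is small, so the numerator is not obviously bounded below.

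Here is a clean way to close your argument. Set $D(t)=\int_0^t u^2A_N$ and rewrite the denominator as $\mu(r)D(r)+\int_0^r D(t)(-d\mu)$. Using the ODE, $N(t)=\lambda\,u(t)\int_0^t A_N u$, and since $u$ is decreasing, $D(t)\le u(0)\int_0^t A_N u$, so $N(t)/D(t)\ge \lambda u(t)/u(0)\ge \lambda u(b)/u(0)=:c_2>0$ for all $t\in(0,b]$. Now split: if $\int_0^b D(-d\mu)\ge D(r)\mu(b)$, then the denominator is $\le 2\int_0^b D(-d\mu)$ while the numerator is $\ge c_2\int_0^b D(-d\mu)$, giving a gap $\ge c_2/2$. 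Otherwise the denominator is $\le 2D(r)\mu(b)$, while by Theorem~\ref{thm1} the numerator is $\ge c_0(\mu(a)-\mu(b))\ge c_0\epsilon_0\mu(a)\ge c_0\epsilon_0\mu(b)$, giving a gap $\ge c_0\epsilon_0/(2D(r))$. This is the dichotomy you were gesturing at; with it, your direct argument is complete and arguably cleaner than the paper's contradiction route.
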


\begin{remark}{The corollaries above are counterparts of Bishop-Gromov volume comparison theorem \cite{[BC]}, Bonnet-Meyers theorem \cite{[CE]}, Cheng's spectrum estimate \cite{[C]}.}
\end{remark}

 Given a stronger condition in theorem \ref{thm1},  we can obtain a better result. Explicitly, we have the following:
\begin{theorem}\label{thm2}
{Let $M^m$($m >1$) be a complete K\"ahler manifold with $Ric \geq (2m-1)k$, $k \neq 0$. Let $N$ be the $2m$ dimensional simply connected real space form with sectional curvature $k$
. For a point $p\in M$, denote $r_M(x)$ to be distance function from $p$ to $x$ in $M$. Let $r_N$ be the distance function on $N$. If $r \leq \frac{i_0}{2}$ where $i_0$ is the injective radius at $p$, then
\begin{equation}\label{2}\frac{1}{A(\partial B_p(r))}\int_{\partial B_p(r)} \Delta r_M \leq \Delta r_N(r) - \epsilon\end{equation}
where $\epsilon$ is a positive constant depending only on $m$, $k$ and $r$. In particular, if $p$ is a pole, then (\ref{2}) holds for any $r>0$. In this case, if $r \geq c > 0$, then there exists a constant $\delta > 0$ depending only on $m, k, c$ such that $\epsilon > \delta > 0$. }
\end{theorem}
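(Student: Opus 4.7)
The plan is to derive a pointwise Riccati-type comparison for $\Delta r_M$ along a radial geodesic that improves the Riemannian Laplacian comparison by an extra positive term arising from the K\"ahler-distinguished direction $J\nabla r$, and then integrate over $\partial B_p(r)$.

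Fix $x\in M$ with $r=d(p,x)\leq i_0/2$ and let $\gamma$ be the minimizing unit-speed geodesic from $p$ to $x$, with $T=\dot\gamma$. Choose a parallel orthonormal frame $\{E_1=JT,\,E_2,\dots,E_{2m-1}\}$ of $T^\perp$ along $\gamma$; the field $JT$ is parallel along $\gamma$ because $\nabla J=0$ and $\nabla_T T=0$. Set $U_{ij}(t)=\operatorname{Hess} r_M(E_i,E_j)(\gamma(t))$ and $u(t)=\Delta r_M(\gamma(t))=\operatorname{tr} U(t)$. The matrix Riccati equation $U'+U^2+R=0$ (with $R_{ij}=\langle R(E_i,T)T,E_j\rangle$) traces to $u'(t)=-|U|^2-\operatorname{Ric}(T,T)$. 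Writing $a(t)=U_{11}(t)$ and applying Cauchy--Schwarz on the $(2m-2)$-dimensional complement of $JT$ inside $T^\perp$ gives
\[
|U|^2 \;\geq\; a^2+\frac{(u-a)^2}{2m-2} \;=\; \frac{u^2}{2m-1}+\frac{2m-1}{2m-2}\Big(a-\tfrac{u}{2m-1}\Big)^{2}.
\]
Combined with $\operatorname{Ric}(T,T)\geq(2m-1)k$, this yields the K\"ahler-improved Riccati
\[
u'(t)\;\leq\;-\frac{u^2}{2m-1}-(2m-1)k-\frac{2m-1}{2m-2}\Big(a-\tfrac{u}{2m-1}\Big)^{2},
\]
a strict improvement of the standard Riemannian inequality unless $a(t)=u(t)/(2m-1)$ identically along $\gamma$.

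To extract a uniform $\epsilon=\epsilon(m,k,r)>0$, I would argue by contradiction and smooth Cheeger--Gromov compactness. Equality $u(r)=\Delta r_N(r)$ would force $a(t)\equiv c_k(t):=s_k'(t)/s_k(t)$ on $[0,r]$; repeating the same trick for every direction in $T^\perp$ would force $U(t)=c_k(t)\,I$ and hence the Jacobi operator $R(\cdot,T)T=k\,I$ on $T^\perp$, i.e., every sectional curvature $K(T,V)$ along $\gamma$ equals $k$. For $m>1$ and $k\neq 0$ this is incompatible with $\nabla J=0$: the K\"ahler identities $R(JX,JY)=R(X,Y)$ and $R(JX,Y)=-R(X,JY)$ propagate the constraint to $K(JT,V)=k$ for all $V\perp T,JT$, which together with $K(T,JT)=k$ forces the local complex-space-form structure with holomorphic sectional curvature $c$ satisfying both $c=k$ and $c/4=k$, a contradiction unless $k=0$. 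Hence a sequence of pointed K\"ahler geodesic segments $(M_i,p_i,\gamma_i)$ with $\operatorname{Ric}\geq(2m-1)k$ realizing $u_i(r)\to\Delta r_N(r)$ limits under smooth Cheeger--Gromov convergence to a configuration violating this rigidity, producing the required $\epsilon$. Averaging over $\partial B_p(r)$ gives the stated inequality; for $p$ a pole $r_M$ is smooth on $M\setminus\{p\}$, so the same argument works for every $r>0$, and the quantitative lower bound $\epsilon>\delta(m,k,c)$ for $r\geq c$ follows from the monotonicity of Riccati flows.

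The main obstacle is the quantitative step: making effective the qualitative K\"ahler obstruction to constant sectional curvature along a radial geodesic. The compactness route is clean but yields no explicit constant. A more direct, algebraic route would use the K\"ahler identity $R(JX,Y)=-R(X,JY)$ on the Jacobi operator together with the trace relation $\operatorname{Ric}(T,T)\geq(2m-1)k$ to derive a pointwise lower bound on $(a-u/(2m-1))^2$ in terms of the discrepancy between holomorphic and totally real sectional curvatures through $T$, and then integrate along $\gamma$.
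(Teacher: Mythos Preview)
Your Riccati inequality is correct but, by itself, buys nothing: the extra term $\frac{2m-1}{2m-2}\bigl(a-\tfrac{u}{2m-1}\bigr)^2$ vanishes identically on the real-space-form model $U=c_k(t)I$, which is exactly the profile you are trying to beat. So all the work is pushed into the rigidity step, and there the argument has two genuine gaps.

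First, the rigidity claim is not established. From $R(\cdot,T)T=kI$ on $T^\perp$ along $\gamma$ and the K\"ahler identities you correctly get $K(JT,W)=k$ for $W\perp T,JT$; but the leap to ``local complex-space-form structure'' is unjustified. You have only pinned down sectional curvatures through $T$ and $JT$ along a single geodesic; nothing forces the curvature in the transverse complex directions, so invoking the complex-space-form relation $K(T,JT)=c$, $K(T,V)=c/4$ to produce a contradiction is circular. Second, the ``smooth Cheeger--Gromov compactness'' you invoke is unavailable: the hypotheses give only a Ricci lower bound, with no two-sided curvature or injectivity-radius control on the sequence $(M_i,p_i)$, so you get at best Gromov--Hausdorff limits, on which your pointwise curvature rigidity makes no sense. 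The alternative you sketch at the end (a pointwise lower bound on $(a-u/(2m-1))^2$ via K\"ahler curvature identities) is exactly the missing ingredient, and it is not clear it can be made to work along a single geodesic.

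The paper's proof avoids both problems by working with an \emph{identity}, not an inequality. It derives a K\"ahler Bochner-type formula (Proposition~\ref{prop1}) which, applied to $r$, relates $\partial_r\!\sum_{\gamma\neq1} r_{\gamma\bar\gamma}$ to $r_{1\bar1}\Delta r-|r_{\alpha\bar\beta}|^2$ plus a divergence on the sphere. One then argues by contradiction: if the spherical average of $\Delta r$ is within $\epsilon$ of $\Delta r_N$, a Chebyshev/Riccati argument shows that on most of the sphere and over a unit radial interval the full Hessian $r_{ij}$ is $L^2$-close to the model $\bar r_{ij}$. Substituting the model Hessian into the integrated K\"ahler identity produces a \emph{fixed} nonzero gap (computed to be $(m-1)/2$, cf.\ \eqref{3.-12}), because the real-space-form Hessian does not satisfy the K\"ahler identity when $k\neq0$. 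This gives the uniform $\epsilon$ directly, with no compactness and no unproved rigidity. The conceptual point your proposal misses is that the improvement must come from a K\"ahler identity that the \emph{real} model violates algebraically, not from an inequality that the model saturates.
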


When the metric is unitary invariant with respect to a point, we have the sharp Laplacian comparison.

\begin{theorem}\label{thm3}
{Let $M^m(m>1)$ be a complete K\"ahler manifold with $Ric \geq (m+1)k$ and suppose the metric is unitary invariant with respect to $p$ in $M$. Let $M_k$ be the complex space form with holomorphic bisectional curvature $k$. Denote $r_M(x)$ to be distance function from $p$ to $x$ in $M$. Let $r_{M_k}$ be the distance function on $M_k$. Then for any $x \in M$,  $y \in M_k$ with $r_M(x) = r_{M_k}(y)$,  $$\Delta r_M(x) \leq \Delta r_{M_k}(y).$$}
\end{theorem}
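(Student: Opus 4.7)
The plan is to use the unitary invariance to reduce everything to a one-dimensional ODE analysis along radial geodesics from $p$, exploit a K\"ahler-specific identity relating the two distinct Jacobi fields, and then conclude via Sturm comparison. Along a unit-speed radial geodesic $\gamma$ from $p$, unitary invariance forces the Hessian of $r_M$ to have only two distinct eigenvalues in a parallel orthonormal frame: $g_1(r)=f_1'/f_1$ of multiplicity one (in the $J\gamma'$-direction) and $g_2(r)=f_2'/f_2$ of multiplicity $2m-2$ (in tangential complex directions), where $f_1,f_2$ solve $f_i''+K_if_i=0$ with $f_i(0)=0$ and $f_i'(0)=1$. Here $K_1=a(r)$ is the radial holomorphic sectional curvature and $K_2=b(r)$ the bisectional curvature between the radial and a tangential complex plane. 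Writing the metric in unitary-invariant holomorphic coordinates centered at $p$ with K\"ahler potential $\psi(|z|^2)$, and letting $u$ parametrize the radial direction with $dr/du=\sqrt{2(\psi'+u^2\psi'')}$, one computes $f_2(r)=u\sqrt{2\psi'(u^2)}$ and $f_1(r)=u\cdot dr/du$, and an application of the chain rule to $(f_2^2)'$ produces the key K\"ahler identity
\[
f_1(r)=f_2(r)\,f_2'(r).
\]
This implies $g_1=g_2-b/g_2$, and hence
\[
\Delta r_M=g_1+(2m-2)g_2=(2m-1)g_2-b(r)/g_2.
\]

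Next, differentiating $f_1=f_2f_2'$ twice and using the Jacobi equations yields $a=4b+b'/g_2$, so the assumption $Ric\geq(m+1)k$ along $\gamma'$ becomes
\[
(2m+2)b+b'/g_2\geq(m+1)k.
\]
Multiplying this by the integrating factor $f_2^{2m+1}$ and regrouping recognizes the left-hand side as the total derivative $(f_2^{2m+1}f_2'')'$, giving
\[
(f_2^{2m+1}f_2'')'\leq -\tfrac{k}{2}(f_2^{2m+2})'.
\]
Integrating from $0$ to $r$ (both boundary terms vanish since $f_2(0)=0$) and dividing by $f_2^{2m+1}$ produces the pointwise bound $f_2''(r)+(k/2)f_2(r)\leq 0$, i.e., $b(r)\geq k/2=:b_0$, matching the bisectional curvature of the model complex space form $M_k$. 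This is the heart of the argument: the K\"ahler identity $f_1=f_2f_2'$ upgrades the trace-level Ricci bound to a pointwise bisectional bound in the unitary-invariant setting.

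The model Jacobi field $f_{2,0}$ satisfies $f_{2,0}''+b_0 f_{2,0}=0$ with matching initial conditions, so since $b\geq b_0$, standard Sturm/Riccati comparison gives $g_2(r)\leq g_{2,0}(r)$. Substituting into the Laplacian formula and comparing with $\Delta r_{M_k}=(2m-1)g_{2,0}-b_0/g_{2,0}$, the conclusion $\Delta r_M\leq\Delta r_{M_k}$ follows from studying the map $F(g,b)=(2m-1)g-b/g$: it is decreasing in $b$ for $g>0$, and increasing in $g$ whenever $(2m-1)g^2+b>0$. When $k\geq 0$ (so $b_0\geq 0$ and hence $b\geq 0$ always), both monotonicities apply directly and the combined inequalities $g_2\leq g_{2,0}$, $b\geq b_0$ immediately give the claim. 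The main technical obstacle lies in the case $k<0$, where $b_0<0$ and verifying the monotonicity in $g$ reduces to checking that $(2m-1)g_2 g_{2,0}\geq |b|$ along $\gamma$; this requires a finer analysis of the coupled $(g_2,b)$ dynamics or a further use of the K\"ahler-enforced inequalities already derived.
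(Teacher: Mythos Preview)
Your approach is genuinely different from the paper's and, for $k\geq 0$, both correct and more conceptual. Your key identity $f_1=f_2 f_2'$ (equivalently $g_1=g_2-b/g_2$) is exactly what the paper obtains as its equation $(r_{2\overline 2})'=2r_{2\overline 2}(\Delta r-mr_{2\overline 2})$ from the K\"ahler Bochner formula; you recast it in Jacobi-field language. The real novelty in your argument is the intermediate inequality $b(r)\geq k/2$: by recognizing $(2m+2)bf_2'f_2^{2m+1}+b'f_2^{2m+2}=-(f_2^{2m+1}f_2'')'$ you upgrade the Ricci bound to a pointwise radial bisectional bound, which the paper never isolates. For $k\geq 0$ this plus Sturm and the monotonicity of $F(g,b)=(2m-1)g-b/g$ finishes cleanly.

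The gap you flag for $k<0$ is, however, genuine and not merely technical. With only $g_2\leq g_{2,0}$ and $b\geq b_0$ one cannot conclude $F(g_2,b)\leq F(g_{2,0},b_0)$: writing $F(g_2,b)-F(g_{2,0},b_0)\leq (2m-1)(g_2-g_{2,0})-b_0(g_{2,0}-g_2)/(g_2g_{2,0})$, the needed inequality reduces to $(2m-1)g_2g_{2,0}\geq |b_0|$, i.e.\ a \emph{lower} bound on $g_2$. But your information gives only upper bounds on $g_2$ (via Sturm), and there is no a priori upper bound on $b$ to prevent $g_2$ from being small while $b$ sits just above $b_0$. The paper does not attempt such an algebraic endgame; instead it runs a coupled ODE comparison directly on the pair $(\overline\Delta r-\Delta r,\ \overline r_{2\overline 2}-r_{2\overline 2})$, showing by a three-case contradiction argument that neither quantity can become negative. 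In that argument the crucial step for $k<0$ uses the model-space inequality $\overline\Delta r>\tfrac{2m-1}{2}\overline r_{2\overline 2}$ (which is special to negative curvature) at the first crossing time. To close your argument for $k<0$ you would need either to import that ODE comparison wholesale, or to extract from the differential Ricci inequality $(2m+2)b+b'/g_2\geq(m+1)k$ a usable lower bound on $g_2$---something your proposal gestures at but does not supply.
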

\begin{remark}
{It is shown in \cite{[L]} that in general, the sharp Laplacian comparison does not hold comparing with the complex space forms.}
\end{remark}

Finally, we have the counterpart of Yau's gradient estimate \cite{[Y]} on K\"ahler manifolds:
\begin{theorem}\label{thm4}
{ Let $M^m(m>1)$ be a complete K\"ahler manifold with $Ric \geq -(2m-1)$. If $f$ is a positive harmonic function on $M$, then \begin{equation}|\nabla \log f| \leq 2m-1-\epsilon\end{equation} where $\epsilon$ is a positive constant depending only on $m$.}
\end{theorem}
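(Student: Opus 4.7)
The plan is to adapt Yau's classical gradient estimate for positive harmonic functions, replacing the two ingredients that saturate in the real hyperbolic model by their Kähler-refined counterparts that appear earlier in the paper. Set $u = \log f$, so that $f > 0$ harmonic gives the nonlinear equation $\Delta u + |\nabla u|^2 = 0$. I would then apply the Kähler-type Bochner formula, which is the main analytic tool the paper announces. The real Riemannian Bochner identity yields
\begin{equation*}
\tfrac{1}{2}\Delta|\nabla u|^2 = |\operatorname{Hess} u|^2 + \langle \nabla u, \nabla \Delta u\rangle + Ric(\nabla u,\nabla u),
\end{equation*}
and the naive bound $|\operatorname{Hess} u|^2 \geq (\Delta u)^2/(2m)$ produces the Riemannian constant $2m-1$. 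The Kähler refinement should improve this to $|\operatorname{Hess} u|^2 \geq (1+\delta)(\Delta u)^2/(2m)$ for some $\delta = \delta(m) > 0$, by splitting the Hessian into its $(1,1)$ and $(2,0)+(0,2)$ pieces and exploiting that a genuine positive contribution from the anti-holomorphic components cannot be avoided uniformly along the gradient flow of $u$ (this is the quantitative form of the Bochner formula that produced the strict inequality in Theorems \ref{thm1} and \ref{thm2}).

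Next I would run Yau's maximum-principle argument. Pick a smooth cutoff $\eta = \phi(r/R)$ with $\phi \equiv 1$ on $[0,1]$ and supported in $[0,2]$, and study $G = \eta\,|\nabla u|^2$ at an interior maximum $x_0 \in B_p(2R)$. At $x_0$ one has $\nabla G = 0$, hence $\nabla|\nabla u|^2 = -|\nabla u|^2\nabla\eta/\eta$, and $\Delta G \le 0$. Substituting the improved Bochner inequality and $\Delta u = -|\nabla u|^2$, together with Young's inequality to absorb the cross term $\langle\nabla u,\nabla|\nabla u|^2\rangle$, yields at $x_0$
\begin{equation*}
\tfrac{2(1+\delta)}{2m}\,\eta\,|\nabla u|^4 \;\le\; 2(2m-1)\,\eta\,|\nabla u|^2 \;+\; \Bigl(\,\text{terms in } |\nabla\eta|^2/\eta,\;\Delta\eta\,\Bigr).
\end{equation*}
To pass to the limit $R \to \infty$ I would control $\Delta\eta = \phi''(r/R)/R^2 + \phi'(r/R)\,\Delta r_M/R$ by invoking Theorem \ref{thm2}, which bounds $\Delta r_M$ strictly below the real hyperbolic comparison quantity. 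After an optimal rescaling and Cauchy–Schwarz on the cross term, the cutoff contributions decay like $1/R$, and what survives is the strict inequality
\begin{equation*}
|\nabla u|^2 \;\le\; \frac{2m(2m-1)}{1+\delta},
\end{equation*}
which I then sharpen to the statement $|\nabla u| \le 2m-1-\epsilon$ with $\epsilon = \epsilon(m) > 0$ by the standard refinement of running the same argument on $|\nabla u|^{2\alpha}$ with an optimally chosen exponent $\alpha$ (as in Cheng--Yau), so that the effective Bochner constant matches the Riemannian sharp value $(2m-1)^2$ in the Riemannian case and is strictly smaller in our Kähler setting.

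The hard part will be keeping the Kähler improvement quantitative and uniform through all of these steps. The real difficulty is that the classical Yau constant $2m-1$ is saturated in the real hyperbolic limit by a very specific asymptotic profile in which both the Bochner inequality and the Laplacian comparison become equalities simultaneously; on a Kähler manifold each of these has a strict defect, but the defects live at different points (a defect in $|\operatorname{Hess} u|^2$ at the maximum point $x_0$ versus a defect in $\Delta r$ on the cutoff annulus), and one must show they cannot conspire to both vanish. Extracting a single $\epsilon = \epsilon(m) > 0$ independent of the point $p$, the function $f$, and the radius $R$ will likely require combining the pointwise Bochner defect with an integrated use of Theorem \ref{thm2} averaged over geodesic spheres, together with a contradiction/compactness argument ruling out the limiting configuration in which the inequality would degenerate to an equality.
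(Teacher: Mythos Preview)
Your plan has a genuine gap: the pointwise improvement $|\operatorname{Hess} u|^2 \geq (1+\delta)(\Delta u)^2/(2m)$ you rely on is simply false. At any single point nothing about the K\"ahler structure prevents $\operatorname{Hess} u$ from being a scalar multiple of the metric (that is a $(1,1)$-tensor), in which case equality holds. Consequently Yau's maximum-principle argument, run at a single interior maximum $x_0$, cannot detect any K\"ahler improvement, however you split the Hessian. Your appeal to Theorem~\ref{thm2} is also a red herring: the cutoff terms already vanish as $R\to\infty$ using only the Riemannian Laplacian comparison, and the defect in $\Delta r$ plays no role in the actual proof.

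The paper's argument is integral and by contradiction, and the mechanism is quite different from what you sketch. One assumes $|\nabla h(p)| > 2m-1-\epsilon$ at some point, sets $w = (2m-1)^2 - |\nabla h|^2 \geq 0$ (nonnegative by Yau's theorem), and observes from the Riemannian Bochner formula that $w$ satisfies an elliptic inequality of the form $\Delta w + \langle X,\nabla w\rangle + u \leq 2(2m-1)w$, where $u \geq 0$ collects the Hessian defect terms. De Giorgi--Nash--Moser iteration then gives $(\dashint_{B_p(1)} w^\lambda)^{1/\lambda} \leq C\, \inf w$, and multiplying the inequality by $\varphi^2 w^{-1/3}$ and integrating by parts bounds $\dashint_{B_p(1)} u$ by a power of $w(p)$. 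Hence if $w(p)$ is small, the full complex Hessian of $h$ is $L^2$-close on $B_p(1)$ to the explicit rigid model $h_{1\bar 1}=\tfrac{1-2m}{2}$, $h_{\alpha\bar\alpha}=1-2m$ for $\alpha\neq 1$, off-diagonal zero. Finally one integrates the K\"ahler identity (\ref{1.1}) for $h$ over $B_p(r)$ and averages in $r\in[\tfrac12,1]$; substituting the model values, the two sides differ by the fixed positive quantity $\tfrac{(2m-1)^2(m-1)}{2}$ times a volume ratio bounded below by a dimensional constant, contradicting the smallness forced above. The K\"ahler gain is therefore captured not pointwise but through the incompatibility of the \emph{Riemannian equality configuration} with the integrated identity (\ref{1.1}); your closing paragraph gestures at this, but the concrete ingredients---the Moser--Harnack step on $w$ to propagate near-saturation over a unit ball, and the gap computation for (\ref{1.1}) on the model---are what make the argument go through.
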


\begin{remark}
{Yau's gradient estimate is sharp in the Riemannian case, see \cite{[LW1]}.}
\end{remark}

We have organized this paper into five parts apart from the introduction.
Section 2 is devoted to establishing the Bochner type formula (\ref{1.1}) we will need in the sequel.
We prove theorem \ref{thm1} and its corollaries, as well as theorem \ref{thm2} in section 3.
In section 4, we give the proof to theorem \ref{thm3}. We also prove a sharp average Laplacian comparison theorem under a condition slightly weaker than theorem \ref{thm3}. An example is given in section 5 to show that if the Ricci curvature has a positive lower bound, the sharp version of theorem \ref{thm1} does not hold comparing with complex space forms. We shall compare the example with the result in \cite{[LW]} by Li and Wang, as well as the local results in \cite{[L]}.
The proof of theorem \ref{thm4} is given in the last section.

Here are some notations in this paper.
We shall use Einstein summation in this paper. For a smooth function $f$ on a manifold $M$, $\Delta f$ denotes the standard Beltrami Laplacian if we use orthonormal frame; if we use unitary frame, then $\Delta f = f_{\alpha\overline{\beta}}g^{\alpha\overline\beta}$ which is one half of the Beltrami Laplacian.
For $p \in M$, $B_p(r)$ denotes the geodesic ball in $M$ centered at $p$ with radius $r$. $Vol$ denotes the volume and $A$ denotes the area. Given a compact set $K\in M$, $\dashint_Kf$ is the average of the integral of $f$ over $K$.

\bigskip
\vskip.1in
\begin{center}
\bf  {\quad Acknowledgment}
\end{center}

The author would like to express his deep gratitude to his advisor, Professor Jiaping Wang for constant help and many valuable discussions during the work. He also thanks Professor Peter Li for his interest in this work.

\section{\bf{ A Bochner type formula for functions on K\"ahler manifolds}}
\begin{prop}\label{prop1}
{Let $M^m$($m >1$) be a complete K\"ahler manifold, $m = dim_{\mathbb{C}}(M)$. Let $f\in C^{\infty}(M)$ and assume that $\nabla f(p) \neq 0$ where $p\in M$. Choosing a unitary frame $e_\alpha \in T^{1,0}(M)(\alpha = 1, 2,..m)$ near $p$ so that $e_1 = \frac{1}{\sqrt{2}}(X-\sqrt{-1}JX)$ where $X = \frac{\nabla f}{|\nabla f|}$,
we have
\begin{equation}\label{1.1}
\begin{aligned}
\frac{1}{2}\langle \nabla f, \nabla (\sum\limits_{\gamma \neq 1}f_{\gamma \overline{\gamma}})\rangle = f_{1\overline{1}}\Delta f -|f_{\alpha \overline{\beta}}|^2+ Re (div Y)
 \end{aligned}
\end{equation}}
where $Y =\sum\limits_{\gamma \neq 1}f_{\overline{\alpha}}f_{\alpha \overline{\gamma}}e_{\gamma}$, $\Delta f = \sum\limits_{\alpha}f_{\alpha\overline\alpha}$.
\end{prop}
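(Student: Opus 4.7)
The plan is to verify (1.1) pointwise at any $q \in M$ with $\nabla f(q) \neq 0$. The crucial structural feature of the adapted frame is that $f_\alpha \equiv 0$ for $\alpha \neq 1$ in a full neighborhood of $p$ and $f_1 = f_{\bar 1} = |\nabla f|/\sqrt{2}$ is real-valued: the real and imaginary parts of $e_\alpha$ for $\alpha\neq 1$ are both orthogonal to the real vector $\nabla f$, so $e_\alpha(f) = 0$. Applying the standard K\"ahler identity $\tfrac{1}{2}\langle\nabla f,\nabla h\rangle = \operatorname{Re}(f_{\bar\alpha}\, e_\alpha h)$ to $h = \sum_{\gamma\neq 1}f_{\gamma\bar\gamma} = \Delta f - f_{1\bar 1}$, only the $\alpha = 1$ term survives, so the left-hand side collapses to $\operatorname{Re}\!\bigl(f_{\bar 1}\,e_1(\Delta f - f_{1\bar 1})\bigr)$.

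I next expand $\operatorname{div} Y = (Y^\gamma)_{;\gamma}$ via the Leibniz rule on $(f_{\bar\alpha}f_{\alpha\bar\gamma})_{;\gamma}$. The first piece, using the scalar Hessian symmetry $f_{\bar\alpha;\gamma} = f_{\gamma\bar\alpha} = \overline{f_{\alpha\bar\gamma}}$, equals $\sum_\alpha\sum_{\gamma\neq 1}|f_{\alpha\bar\gamma}|^2 = |f_{\alpha\bar\beta}|^2 - \sum_\alpha|f_{\alpha\bar 1}|^2$, which cancels the $-|f_{\alpha\bar\beta}|^2$ on the right and leaves the clean contribution $-\sum_\alpha|f_{\alpha\bar 1}|^2$. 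The second piece collapses (only $\alpha=1$ contributes, since $f_{\bar\alpha} = 0$ otherwise) to $f_{\bar 1}\sum_{\gamma\neq 1}f_{1\bar\gamma;\gamma}$. The identity is thereby reduced to
\[
\operatorname{Re}\!\bigl(f_{\bar 1}\,e_1(\Delta f - f_{1\bar 1})\bigr) - f_{\bar 1}\operatorname{Re}\!\sum_{\gamma\neq 1} f_{1\bar\gamma;\gamma} = f_{1\bar 1}\Delta f - \sum_\alpha|f_{\alpha\bar 1}|^2.
\]

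The main obstacle is this reduced algebraic identity. I plan to apply the K\"ahler Ricci identity $f_{\gamma\bar\gamma;1} = f_{1\bar\gamma;\gamma}$ (which holds because the K\"ahler curvature operator $R(e_1,e_\gamma)$ vanishes on tensors, the curvature 2-form being of type $(1,1)$); summing over $\gamma$ then gives $(\Delta f)_{;1} = f_{1\bar 1;1} + \sum_{\gamma\neq 1}f_{1\bar\gamma;\gamma}$, after which the reduced identity becomes $f_{\bar 1}\operatorname{Re}\bigl(f_{1\bar 1;1} - e_1(f_{1\bar 1})\bigr) = f_{1\bar 1}\Delta f - \sum_\alpha|f_{\alpha\bar 1}|^2$. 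The discrepancy $e_1(f_{1\bar 1}) - f_{1\bar 1;1}$ is precisely the Christoffel contribution $\omega^\delta_1(e_1)f_{\delta\bar 1} + \omega^{\bar\delta}_{\bar 1}(e_1)f_{1\bar\delta}$ arising from the non-holomorphic character of the adapted frame. These connection forms can be computed directly from the defining relation $e_1 = (X - \sqrt{-1}JX)/\sqrt{2}$ via $\nabla_{e_1}e_1 = \tfrac{1}{\sqrt{2}}(I-\sqrt{-1}J)\nabla_{e_1}X$ with $X = \nabla f/|\nabla f|$, which expresses them through $\nabla^2 f$ and $\nabla |\nabla f|$; the algebra produces exactly the Hessian terms on the right-hand side. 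The hardest part is this bookkeeping of connection forms, which I have tested in explicit examples (e.g.\ $f = x_1 x_2$ on $\mathbb{C}^2$) to confirm the identity holds; completing this algebraic verification finishes the proof.
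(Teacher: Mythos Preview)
Your reduction is correct up to the point where you arrive at the ``reduced identity''
\[
f_{\bar 1}\,\mathrm{Re}\bigl(f_{1\bar 1;1}-e_1(f_{1\bar 1})\bigr)=f_{1\bar 1}\Delta f-\sum_\alpha|f_{\alpha\bar 1}|^2,
\]
but you then stop, citing only example checks. That final step is precisely where the paper's argument is sharper, and it is worth seeing why.

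The paper does not expand $\operatorname{div}Y$ at all. It starts instead from the frame-independent half-Bochner identity
\[
(f_{\bar\alpha}f_{\alpha\bar\beta})_{;\beta}=|f_{\alpha\bar\beta}|^2+(\Delta f)_\alpha f_{\bar\alpha},
\]
splits the vector $W=f_{\bar\alpha}f_{\alpha\bar\beta}e_\beta$ as $Y+Z$ with $Z=f_{\bar\alpha}f_{\alpha\bar 1}e_1$, and computes $\mathrm{Re}(\operatorname{div}Z)$ directly. The key observation, which replaces your entire connection-form bookkeeping, is this: in the adapted frame $f_{\bar\alpha}=0$ for $\alpha\neq 1$, so the vector field $f_{\bar 1}e_1$ coincides with the \emph{frame-independent} $(1,0)$-gradient $f_{\bar\alpha}e_\alpha$, whose holomorphic divergence is simply $\Delta f$. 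Writing $Z=f_{\alpha\bar 1}\,(f_{\bar\alpha}e_1)$ and applying the product rule then gives
\[
\mathrm{Re}(\operatorname{div}Z)=f_{1\bar 1}\,\Delta f+\tfrac{1}{2}\langle\nabla f,\nabla f_{1\bar 1}\rangle
\]
in one line, and (1.1) follows by subtraction. Your reduced identity is exactly the statement $\mathrm{Re}(\operatorname{div}Z)=f_{1\bar 1}\Delta f+\tfrac12\langle\nabla f,\nabla f_{1\bar 1}\rangle$ rewritten; so the missing piece in your argument is precisely this observation about $f_{\bar 1}e_1$, which lets you avoid ever touching $\nabla_{e_1}e_1$ or its Christoffel symbols.

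One further caution: your line ``$\operatorname{div}Y=(Y^\gamma)_{;\gamma}$ via the Leibniz rule on $(f_{\bar\alpha}f_{\alpha\bar\gamma})_{;\gamma}$'' with $\gamma$ restricted to $\gamma\neq 1$ is delicate. The components $Y^\gamma$ vanish for $\gamma=1$, but the covariant divergence $Y^\gamma_{;\gamma}$ still picks up a $\gamma=1$ contribution through the connection (namely $\Gamma^1_{1\delta}Y^\delta$). Treating $f_{\bar\alpha}f_{\alpha\bar\gamma}$ as tensor components and contracting gives $\operatorname{div}W$, not $\operatorname{div}Y$, and the discrepancy is exactly the $\operatorname{div}Z$ you end up having to compute anyway. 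The paper's decomposition $W=Y+Z$ handles this cleanly from the start.
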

\begin{proof}

Recall the Bochner formula:
 \begin{equation}\label{2.1}
\frac{1}{2}\Delta(|\nabla f|^2) = |f_{\alpha \beta}|^2 + |f_{\alpha \overline{\beta}}|^2 + (\Delta f)_{\alpha} f_{\overline{\alpha}} + (\Delta f)_{\overline{\alpha}}f_{\alpha} + Ric_{\alpha \overline{\beta}}f_{\overline{\alpha}}f_{\beta}.
\end{equation}
(\ref{2.1}) can be decomposed into two parts, namely,
\begin{equation}\label{2.2}
(f_{\overline{\alpha}}f_{\alpha \overline{\beta}})_{\beta} = |f_{\alpha \overline{\beta}}|^2+(\Delta f)_{\alpha} f_{\overline{\alpha}},
\end{equation}
\begin{equation}\label{2.3}
(f_{\alpha}f_{\overline{\alpha}\overline{\beta}})_{\beta}= |f_{\alpha \beta}|^2+(\Delta f)_{\overline{\alpha}}f_{\alpha} + Ric_{\alpha \overline{\beta}}f_{\overline{\alpha}}f_{\beta}.
\end{equation}

Define a vector field $$Z = f_{\overline{\alpha}}f_{\alpha \overline{1}}e_1,$$
then (\ref{2.2}) becomes
\begin{equation}\label{2.4}
div Y + div Z = |f_{\alpha \overline{\beta}}|^2+(\Delta f)_{\alpha} f_{\overline{\alpha}}.
\end{equation}
Now we compute
 \begin{equation}\label{2.5}
\begin{aligned}
Re (div Z) &= Re (\sum\limits_{\beta \neq 1}\langle \nabla_{e_{\beta}} (f_{\overline{\alpha}}f_{\alpha \overline{1}}e_1), e_{\overline{\beta}}\rangle + \langle \nabla_{e_1}(f_{\overline{\alpha}}f_{\alpha \overline{1}}e_1), e_{\overline{1}}\rangle) \\&= Re(\sum\limits_{\beta \neq 1}f_{\alpha\overline{1}}\langle \nabla_{e_{\beta}} (f_{\overline{\alpha}}e_1), e_{\overline\beta}\rangle + f_{\alpha\overline{1}}\langle \nabla_{e_1}(f_{\overline\alpha}e_1) , e_{\overline 1}\rangle + e_1(f_{\alpha\overline{1}})\langle f_{\overline\alpha}e_1, e_{\overline 1}\rangle)\\&=f_{1\overline{1}}\Delta f +\frac{1}{2}\langle \nabla f, \nabla (f_{1\overline{1}})\rangle.
 \end{aligned}
\end{equation}

Plugging (\ref{2.5}) in (\ref{2.4}), we find
\begin{displaymath}
\begin{aligned}
\frac{1}{2}\langle \nabla f, \nabla (\sum\limits_{\gamma \neq 1}f_{\gamma \overline{\gamma}})\rangle = f_{1\overline{1}}\Delta f -|f_{\alpha \overline{\beta}}|^2+ Re (div Y).
 \end{aligned}
\end{displaymath}

This completes the proof of proposition \ref{prop1}.
\end{proof}
\begin{remark} Note that in (\ref{1.1}), it is assumed that $\nabla f \neq 0$ at $p$. In some applications, we will multiply (\ref{1.1}) on both side by cut-off functions and do integration by parts. We can justify the integration by approximation of Morse functions, no matter whether $\nabla f$ is vanishing somewhere.
\end{remark}

\section{\bf{Relative volume comparison}}

In this section we are going to prove theorem \ref{thm1} and its corollaries, together with theorem \ref{thm2}.
First we shall prove the corollaries in the introduction assuming theorem \ref{thm1}.

\noindent\emph{Proof of corollary \ref{cor1}:}
Suppose for sufficiently small $\epsilon$, \begin{equation}\label{3.1}
\frac{Vol(B_p(b))}{Vol(B_p(a))} \geq \frac{Vol(B_N(b)) }{Vol(B_N(a))}(1 - \epsilon).\end{equation}

We have
\begin{equation}\label{3.0}
\begin{aligned}
\frac{Vol(B_p(b))}{Vol(B_p(a))}&= \frac{Vol(B_p(\frac{a+b}{2}))}{Vol(B_p(a))} + \frac{A(\partial(B_p(\frac{a+b}{2})))}{Vol(B_p(a))}\int\limits_{\frac{a+b}{2}}^{b}\frac{A(\partial(B_p(r)))}{A(\partial(B_p(\frac{a+b}{2})))}
dr \\&\leq \frac{Vol(B_N(\frac{a+b}{2}))}{Vol(B_N(a))} +\frac{A(\partial(B_p(\frac{a+b}{2})))}{Vol(B_p(a))}\int\limits_{\frac{a+b}{2}}^{b}\frac{A(\partial(B_N(r)))}{A(\partial(B_N(\frac{a+b}{2})))}dr.
\end{aligned}
\end{equation}
Putting (\ref{3.0}), (\ref{3.1}) together, after some manipulation, we find
\begin{equation}\label{3.2}
\frac{A(\partial(B_p(\frac{a+b}{2})))}{Vol(B_p(a))} \geq \frac{A(\partial(B_N(\frac{a+b}{2})))}{Vol(B_N(a))}(1-\delta_1).\end{equation}
Also note that
\begin{equation}\label{3.-1}
\begin{aligned}
\frac{Vol(B_p(a))}{A(\partial(B_p(\frac{a+b}{2})))} &=
\frac{A(\partial(B_p(a)))}{A(\partial(B_p(\frac{a+b}{2})))}\int\limits_{0}^{a}
\frac{A(\partial(B_p(r)))}{A(\partial(B_p(a)))}dr \\&\geq
\frac{A(\partial(B_p(a)))}{A(\partial(B_p(\frac{a+b}{2})))}\int\limits_{0}^{a}
\frac{A(\partial(B_N(r)))}{A(\partial(B_N(a)))}dr.
\end{aligned}
\end{equation}
Combining (\ref{3.2}), (\ref{3.-1}) together, we get
\begin{equation}\label{3.3}
\frac{A(\partial(B_p(\frac{a+b}{2})))}{A(\partial(B_p(a)))} \geq \frac{A(\partial(B_N(\frac{a+b}{2})))}{A(\partial(B_N(a)))}(1-\delta_2).\end{equation}
In (\ref{3.2}), (\ref{3.3}), $\delta_1, \delta_2$ are positive constants depending only on $\epsilon, a, b, m, k$.
Moreover, $\lim\limits_{\epsilon \to 0}\delta_i = 0$ for $i = 1, 2$. If $k = -1$, $\delta_i$ depends only on $\epsilon, b-a, c, m$.

If $\epsilon$ is sufficiently small, (\ref{3.3})
contradicts theorem \ref{thm1}.           \qed

\bigskip

\noindent\emph{Proof of corollary \ref{cor2}:}
Let $N$ be the $2m$ dimensional real space form with constant sectional curvature $-1$.
Taking $a_i = i, b_i = i + 1$ in corollary \ref{cor1} for $i = 1, 2,....$, we have
\begin{equation}\label{3.-2}
\frac{Vol(B_p(i+1))}{Vol(B_p(i))} \leq (1-\epsilon_i)\frac{Vol(B_N(i+1))}{Vol(B_N(i))}.
\end{equation}
According to corollary \ref{cor1}, there exists a positive constant $\delta$ such that $\epsilon_i > \delta$ for all $i \geq 1$. Therefore (\ref{3.-2}) becomes
\begin{equation}\label{3.-3}
\frac{Vol(B_p(i+1))}{Vol(B_p(i))} \leq (1-\delta)\frac{Vol(B_N(i+1))}{Vol(B_N(i))}.
\end{equation}
By iteration of (\ref{3.-3}), it follows that
\begin{equation}
\frac{Vol(B_p(i))}{Vol(B_p(1))} \leq (1-\delta)^{i-1}\frac{Vol(B_N(i))}{Vol(B_N(1))}.
\end{equation}
Thus
\begin{equation}\label{3.-4}
\begin{aligned}
\frac{\ln Vol(B_p(i))}{i} &\leq \frac{i-1}{i}\ln(1-\delta) + \frac{\ln Vol(B_N(i))}{i} \\&+ \frac{\ln {Vol(B_p(1))}}{i} -\frac{Vol(B_N(1))}{i}.
\end{aligned}
\end{equation}

When $i\to \infty$, the RHS of (\ref{3.-4}) is approaching $2m-1 + \ln(1-\delta)$.
 This completes the proof of corollary \ref{cor2}.  \qed

\bigskip

\noindent\emph{Proof of corollary \ref{cor3}:}
Let $S^{2m}$ be the $2m$ dimensional sphere with constant sectional curvature $1$. Assuming $d(M) = d$, we pick two points $p, q \in M$ such that $dist(p, q) = d(M)$. According to corollary \ref{cor1}, there exists a positive constant $\epsilon$ such that
$$\frac{Vol(B_p(\frac{d}{2}))}{Vol(B_p(d))} \geq \frac{Vol( B_{S^{2m}}(\frac{d}{2})) }{Vol( B_{S^{2m}}(d))}(1 + \epsilon),$$
$$\frac{Vol(B_q(\frac{d}{2}))}{Vol(B_q(d))} \geq \frac{Vol( B_{S^{2m}}(\frac{d}{2}) )}{Vol( B_{S^{2m}}(d))}(1 +\epsilon).$$

Therefore
\begin{equation}\label{3.4}
\begin{aligned}
1 &\geq \frac{Vol( B_p(\frac{d}{2}))+Vol(B_q(\frac{d}{2}))}{Vol(M)} \\&\geq 2(1+\epsilon)\frac{Vol(B_{S^{2m}}(\frac{d}{2}))} {Vol(B_{S^{2m}}(d))}.
\end{aligned}
\end{equation}

If $d$ is sufficiently close to $\pi$, the right hand side of (\ref{3.4}) is greater than 1. This is a contradiction.  \qed

\begin{remark}{The counterexample in section 5 shows that when $Ric \geq 2m-1$, the diameter of the K\"ahler manifold could exceed that of $\mathbb{C}\mathbb{P}^m$. The corollary says the diameter of the K\"ahler manifold can not be too close to that of $S^{2m}$.}
\end{remark}

\noindent\emph{Proof of corollary \ref{cor4}:}

We use the same notation as in theorem \ref{thm1}. Denote the area of the geodesic sphere $\partial B_p(r)$ by $A(r)$, the volume of the geodesic ball $B_p(r)$ by $V(r)$. Denote $\lambda_1(B_N( r))$ by $\lambda_1$ and let
 $f$ be the nonnegative eigenfunction to the equation $$\Delta f = -\lambda_1 f$$ on $B_{N}(r)$ with Dirichlet boundary condition. After normalization, we may assume $\int_{B_{N}( r)} f^2 = 1$. It is easy to see that $f$ is a radical function. Pulling $f$ back to the tangent space of $p$, via the exponential map, we may assume that $f$ is defined on $B_p(r)$.

Suppose there is small constant $\epsilon$ such that $$\lambda_1(B_p(r)) \geq \lambda_1 - \epsilon,$$  then we have the inequality
\begin{equation}\label{3.5}
\lambda_1 - \epsilon \leq \frac{\int_{B_p(r)}|\nabla f|^2}{\int_{B_p(r)} f^2}.\end{equation}
Using integration by parts, we find
\begin{equation}\label{3.6}
\frac{\int_{B(P, r)}(\lambda_1 f + \Delta f)f}{\int_{B_p(r)}f^2} \leq \epsilon.
\end{equation}
By Cheng's argument in \cite{[C]}, $$\lambda_1 f + \Delta f \geq 0$$ in $B_p(r)$.
It is simple to see that $f$ is strictly between two positive constants in $B_p(\frac{r}{2})$.
By (\ref{3.6}), we have
\begin{equation}\label{3.-5}
\begin{aligned}
(\min\limits_{B_p(\frac{r}{2})}f) \frac{\int_{B_p(\frac{r}{2})}(\lambda_1 f + \Delta f)}{V(\frac{r}{2})}&\leq
\frac{\int_{B_p(\frac{r}{2})}f(\lambda_1 f + \Delta f)}{V(\frac{r}{2})} \\&\leq
\frac{\int_{B_p(r)}f(\lambda_1 f + \Delta f)}{V(\frac{r}{2})}\\&\leq \epsilon \frac{\int_{B_p(r)}f^2}{V(\frac{r}{2})}
\\&\leq \epsilon (\max\limits_{B_p(r)}f^2)\frac{V(r)}{V(\frac{r}{2})}\\&\leq C(r, k, m)\epsilon\max\limits_{B_p(r)}f^2.
\end{aligned}
\end{equation}
Therefore, we conclude
\begin{equation}\label{3.7}
\begin{aligned}
\dashint_{B(P, \frac{r}{2})}(\lambda_1 f + \Delta f) &\leq C(r, k, m)\epsilon\frac{\max\limits_{B_p(r)}f^2}{\min\limits_{B_p(\frac{r}{2})}f}\\&=\delta(\epsilon, r, k, m).
\end{aligned}
\end{equation}
 Noting that $f$ is a function of $r$ and $f' \leq 0$, we have
\begin{equation}\label{3.8}
\begin{aligned}\dashint_{B(P, \frac{r}{2})}\lambda_1 f & = \lambda_1\frac{\int^{\frac{r}{2}}_{0}f(t)A(t)dt}{V(\frac{r}{2})} \\&= \lambda_1 f(\frac{r}{2}) + \lambda_1\int^{\frac{r}{2}}_{0} (-f'(t))\frac{V(t)}{V(\frac{r}{2})}dt \\&\geq C(r)\end{aligned}\end{equation} where $$C(r)= \lambda_1 f(\frac{r}{2}) + \lambda_1\int^{\frac{r}{2}}_{0} (-f'(t))\frac{Vol_N(t)}{Vol_N(\frac{r}{2})}dt.$$
In the last inequality of (\ref{3.8}), we have applied the Bishop-Gromov volume comparison.
Using the divergence theorem, we have \begin{equation}\label{3.9}
\dashint_{B(P, \frac{r}{2})} \Delta f = f'(\frac{r}{2})\frac{A(\frac{r}{2})}{V(\frac{r}{2})}.
\end{equation}

Combining (\ref{3.7}), (\ref{3.8}), (\ref{3.9}), we obtain
\begin{equation}\label{3.-14}
\frac{A(\frac{r}{2})}{V(\frac{r}{2})} \geq \frac{C(r)- \delta}{(-f'(\frac{r}{2}))} = \frac{A(B_N(\frac{r}{2}))}{Vol(B_N(\frac{r}{2}))} - \overline{\delta}
\end{equation}
where $\delta, \overline\delta$ are small constants depending on $\epsilon, m, k, r$.

If $\epsilon$ is very small, $\overline\delta$ is small.
(\ref{3.-14}) contradicts theorem \ref{thm1}.               \qed

\bigskip
\noindent\emph{Proof of theorem \ref{thm1}:}

We consider the case $Ric \geq -(2m-1)$ first.

Let $n = 2m$. For $x\in M$, define $r(x) = d(x, p)$.  Choose an orthonormal frame $h_i$ ($i = 1, 2,.. 2m$) near $x$ so that $h_1 = \nabla r$ and $Jh_{2\alpha-1}=h_{2\alpha}$ for $1\leq \alpha \leq m$. Define a unitary frame $\{e_{\alpha}\}$ so that $e_{\alpha} = \frac{1}{\sqrt{2}}(h_{2\alpha-1} - \sqrt{-1}h_{2\alpha})$. Let $\omega^i$ be the dual 1-form of $h_i$. Define a tensor $S$ near $x$ such that
\begin{equation}\label{3.10}
\begin{aligned}
S &= S_{ij}\omega^i\otimes\omega^j\\& = \coth r\sum\limits_{i\neq 1}\omega^i\otimes\omega^i.
\end{aligned}
\end{equation}

It is simple to see that the tensor $S$ is independent of the frame $h_i$, moreover, $S$ is the Hessian of the distance function in real space form with sectional curvature $-1$. After the complexification, we find
\begin{equation}\label{3.-10}
S_{\alpha\overline\beta} = \left\{
\begin{array}{rl}  0 &  \alpha \neq \beta \\ \coth r & \alpha=\beta, \alpha \neq 1 \\
\frac{1}{2}\coth r & \alpha = \beta = 1 .\\
\end{array} \right.
\end{equation}

We introduce the proposition as follows:
\begin{prop}\label{prop3}
{Let $M^n$ be a complete Riemannian manifold such that $Ric \geq -(n-1)$, $p \in M$ be a point.
Define $N$ to be the $n$ dimensional real space form with constant sectional curvature $-1$.
Given constants $b>a > c > 0$, $\epsilon > 0$, if the area of the geodesic spheres satisfies
\begin{equation}\label{3.11}
\frac{A(\partial B_p(b))}{A(\partial B_p(a))} \geq \frac{A(\partial B_N(b))}{A{(\partial B_N(a))}}- \epsilon,\end{equation}
 there exists positive constants $\delta$, $C$ and a smooth function $w$ defined in the annulus $T = \{x\in M| \frac{3a+2b}{5} \leq d(x, p) \leq \frac{2a+3b}{5}\}$ so that
\begin{equation}\label{3.-9}
\begin{aligned}
\dashint_{T}(|\nabla w - \nabla r|^2 &+ \sum\limits_{i, j}|w_{ij} - S_{ij}|^2) < \delta(b-a, c, n, \epsilon), \\&|\nabla w| < C(b-a, c, n).
\end{aligned}
\end{equation}
Moreover, $$\lim\limits_{\epsilon \to 0}\delta(b-a, n, c, \epsilon) = 0.$$}
\end{prop}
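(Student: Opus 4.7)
The plan is a three-step almost-rigidity argument in the spirit of Colding: first convert the area-ratio defect into an $L^1$ bound on the Laplacian deficit of $r$, then use a Bochner identity for $r$ (which has $|\nabla r|\equiv 1$ a.e.) to relate $|\mathrm{Hess}(r)-S|^{2}$ to that deficit, and finally produce a smooth $w$ by elliptic regularization of $r$ and transfer the estimates. Let $u := (n-1)\coth r - \Delta r$; by the Laplacian comparison under $\mathrm{Ric}\ge -(n-1)$, $u\ge 0$ (as a distribution, and pointwise off the cut locus).

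For Step 1, set $f(r):=A(\partial B_{p}(r))/A(\partial B_{N}(r))$. The first variation of area yields $(\log f)'(r) = \dashint_{\partial B_{p}(r)} \Delta r - (n-1)\coth r = -\dashint_{\partial B_{p}(r)} u$, which is $\le 0$, recovering Bishop--Gromov. Hypothesis (\ref{3.11}) gives $f(b)/f(a)\ge 1-C(a,b,n)\,\epsilon$, so $\int_{a}^{b}\dashint_{\partial B_{p}(r)} u\,dr \le C\epsilon$. Multiplying by the area $A(\partial B_{p}(r))$, which is uniformly bounded above on $[c,b]$ by Bishop--Gromov, and applying the co-area formula gives
\begin{equation*}
\int_{T} u \;\le\; \delta_{0}(\epsilon,b-a,c,n), \qquad \delta_{0}\to 0 \text{ as } \epsilon\to 0.
\end{equation*}

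For Step 2, on the smooth locus of $r$, applying Bochner to $|\nabla r|^{2}\equiv 1$ and substituting $\Delta r=(n-1)\coth r - u$, then using $\mathrm{Hess}(r)(\nabla r,\cdot)=0$ and $\langle \mathrm{Hess}(r),S\rangle = \coth r\cdot \Delta r$, I arrive at the clean identity
\begin{equation*}
|\mathrm{Hess}(r)-S|^{2} + \bigl(\mathrm{Ric}(\nabla r,\nabla r)+(n-1)\bigr) \;=\; \langle \nabla r,\nabla u\rangle + 2u\coth r.
\end{equation*}
Dropping the nonnegative curvature term, I multiply by a cutoff $\chi\ge 0$ supported in a slightly thickened annulus $T''\supset T$ with $\chi\equiv 1$ on $T$, integrate, and integrate by parts in the divergence term: $\int \chi\langle\nabla r,\nabla u\rangle = -\int u\chi\Delta r - \int u\langle\nabla\chi,\nabla r\rangle = (n-1)\int\chi u\coth r - \int \chi u^{2} - \int u\langle\nabla\chi,\nabla r\rangle$ modulo signs. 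The Riccati inequality gives $\Delta r \ge -C(c,n)$ on $\{r\ge c\}$ off the cut locus, so $u\le C(c,n)$ pointwise a.e., and each remaining term on the right is bounded by $C\int_{T''} u \le C\delta_{0}$, yielding $\int_{T}|\mathrm{Hess}(r)-S|^{2}\le C\delta_{0}$.

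For Step 3, obtain $w$ by solving the Dirichlet problem $\Delta w = (n-1)\coth r$ on a slightly thickened annulus with $w=r$ on the boundary; elliptic regularity makes $w$ smooth in $T$. Then $\Delta(w-r)=u$ with vanishing boundary data, so standard $L^{2}$ elliptic estimates and the Bochner identity applied to $w-r$ give $\|w-r\|_{W^{2,2}(T)}\le C\|u\|_{L^{2}}\le C\delta_{0}^{1/2}$ using $\|u\|_{L^{2}}^{2}\le\|u\|_{L^{\infty}}\|u\|_{L^{1}}$. This transfers the Hessian estimate to $w$ and gives the gradient-difference bound. The pointwise bound $|\nabla w|<C$ then follows from local gradient estimates for the elliptic equation together with $|\nabla r|=1$ on the boundary data.

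The main obstacle is the cut locus: $r$ is only Lipschitz there and the identity of Step 2 holds pointwise only on its complement. The integrations by parts must be justified in the distributional framework, exploiting that $u$ is a nonnegative Radon measure whose singular part at the cut locus has the \emph{correct} sign to be dropped, or equivalently by passing to Morse approximations of $r$ as flagged in the remark after Proposition \ref{prop1}. Turning the qualitative convergence $\delta_{0}(\epsilon)\to 0$ into the quantitative dependence claimed — with constants that depend only on $b-a$, $c$, $n$, $\epsilon$ (and not on $a,b$ separately in the hyperbolic case) — requires that every auxiliary constant produced above be traced as depending only on $\coth c$, $b-a$, and $n$, which is where the lower bound $r\ge c$ in the hypothesis enters essentially.
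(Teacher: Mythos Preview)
Your overall strategy is in the right spirit, but Step~2 contains a genuine gap that is not repaired by the cut-locus discussion at the end. The claim ``the Riccati inequality gives $\Delta r \ge -C(c,n)$ on $\{r\ge c\}$ off the cut locus'' is false: the Riccati inequality $\partial_r(\Delta r) + \tfrac{1}{n-1}(\Delta r)^2 \le n-1$ yields only the \emph{upper} bound $\Delta r \le (n-1)\coth r$. Along a geodesic approaching a conjugate cut point, $\Delta r \to -\infty$, so $u=(n-1)\coth r - \Delta r$ is unbounded as a pointwise function on the smooth locus. (The paper's Claim~4, which does prove such a two-sided bound, requires $p$ to be a pole; Proposition~3 has no such hypothesis.) Without $u\in L^\infty$, you cannot pass from $\|u\|_{L^1}\le \delta_0$ to control of the $\int \chi u^2$ term that your integration by parts produces, and Step~2 breaks down. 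The distributional/Morse-approximation remedy you mention at the end does not address this: the singular part of $u$ has the right sign to be dropped, but the absolutely continuous part is still not in $L^2$ in general.

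The paper sidesteps this entirely by reversing the order of your Steps~2 and~3: it \emph{first} constructs a smooth replacement for $r$ --- a harmonic function $g$ on the annulus with prescribed radial boundary values, then $w=\overline g^{-1}\circ g$ --- and only \emph{then} applies the Bochner formula, to the smooth function $u=\psi\circ w$ rather than to $r$. The smoothness of $g$ gives Cheng--Yau gradient bounds and makes every integration by parts legitimate; the area-pinching hypothesis enters via $\dashint|\nabla g-\nabla\overline g|^2\le\delta$ (an energy identity using only boundary data) and a Poincar\'e inequality on the annulus, rather than via your Step~1. Your Step~3 is close to this idea, but applying Bochner to $w-r$ still drags $\mathrm{Hess}(r)$ back in; you should apply Bochner to $w$ itself and compare all terms to their model-space values, which is exactly what the paper does in (3.16)--(3.28).
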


\begin{remark}
Proposition \ref{prop3} originates from Cheeger and Colding's paper \cite{[CC]}. Their estimate depends on both the upper bound and lower bound of $a$ and $b$ which is not sufficient to prove corollary 2.
\end{remark}
 \begin{proof}

For notational convenience, in the proof of proposition \ref{prop3}, $\delta$ denotes small positive constants depending only on $\epsilon, c, b-a, n$. $C$ denotes positive constants depending only on $c, b-a, n$. Moreover, $\lim\limits_{\epsilon\to 0}\delta = 0$.

Define $\overline\Delta, \overline\nabla$ to be the Laplacian and the covariant derivatives in $N$.
Pick a point $\overline{p}$ in $N$, define
$$A(a, b) = \{x\in M|a\leq d(x, p)\leq b\},$$
$$\overline{A}(a,b) =
\{y\in N|a\leq d(y, \overline p)\leq b\}.$$

We solve the equation
\begin{equation}\label{3.-15}
\overline\Delta \overline{g} = 0
 \end{equation}
in $\overline{A}(a,b)$ satisfying the boundary condition $\overline g(x) = 2$ on $\partial B(\overline p, a)$, $\overline g(x) = 1$ on $\partial B(\overline P, b)$.  Then $\overline{g}$ is a radical function, say, $\overline g = \phi(r)$.  Pulling $\overline{g}$ back to the tangent space of $p$, via the exponential map, we may assume that $\overline{g}$ is defined in $A(a, b)$. It is simple to check that $\overline{g}$ is strictly decreasing with respect to $r$, therefore $\overline g^{-1}$ exists. Moreover,
\begin{equation}\label{3.-6}
\overline{g},|\overline{g}'|, |\overline{g}''|, |(\overline{g}^{-1})'|, |(\overline{g}^{-1})''|  < C.
\end{equation}

Similarly we solve the equation
\begin{equation}\label{3.-16}
\Delta g = 0
\end{equation}
in $A(a, b)$ satisfying the same boundary condition as $\overline g$.

\begin{claim}\label{cl1}
\begin{equation}
\begin{aligned}
&|\frac{A(\partial B_p(a))}{Vol(A(a, b))} - \frac{A(\partial B_{\overline p}(a))}{Vol(\overline A(a, b))}| < \delta,\\& |\frac{A(\partial B_p(b))}{Vol(A(a, b))} - \frac{A(\partial B_{\overline p}(b))}{Vol(\overline A(a, b))}| < \delta.
\end{aligned}
\end{equation}
\end{claim}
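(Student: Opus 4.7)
The plan is to deduce Claim 1 from the hypothesis \eqref{3.11} by showing that the normalized area function $f(r) := A(\partial B_p(r))/A(\partial B_N(r))$ is nearly constant on $[a,b]$, and then using this to compare $\text{Vol}(A(a,b))$ against $f(a)\,\text{Vol}(\overline A(a,b))$.

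First I would recall that by the Bishop--Gromov comparison under $\mathrm{Ric} \geq -(n-1)$, the function $f(r)$ is non-increasing. Rewriting \eqref{3.11} as
\begin{equation*}
\frac{f(b)}{f(a)} \geq 1 - \epsilon\cdot\frac{A(\partial B_N(a))}{A(\partial B_N(b))} \geq 1 - \epsilon,
\end{equation*}
I get $f(a) \geq f(r) \geq f(b) \geq (1-\epsilon)f(a)$ for every $r\in[a,b]$. In other words, $f$ is pinched in a window of relative size $\epsilon$ across the interval.

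Next I would use this pinching inside the coarea-type identity $\text{Vol}(A(a,b)) = \int_a^b f(r)A(\partial B_N(r))\,dr$, which gives
\begin{equation*}
(1-\epsilon)\,f(a)\,\text{Vol}(\overline A(a,b)) \;\leq\; \text{Vol}(A(a,b)) \;\leq\; f(a)\,\text{Vol}(\overline A(a,b)).
\end{equation*}
Since $A(\partial B_p(a)) = f(a)\,A(\partial B_N(a))$, dividing yields
\begin{equation*}
\frac{A(\partial B_N(a))}{\text{Vol}(\overline A(a,b))} \;\leq\; \frac{A(\partial B_p(a))}{\text{Vol}(A(a,b))} \;\leq\; \frac{1}{1-\epsilon}\cdot\frac{A(\partial B_N(a))}{\text{Vol}(\overline A(a,b))}.
\end{equation*}
The two quantities differ by at most $\tfrac{\epsilon}{1-\epsilon}\cdot \tfrac{A(\partial B_N(a))}{\text{Vol}(\overline A(a,b))}$, and since $A(\partial B_N(r))\geq A(\partial B_N(a))$ for $r\geq a$, the fraction on the right is bounded above by $1/(b-a)$. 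This produces the first estimate with $\delta = \epsilon/((1-\epsilon)(b-a))$, which tends to $0$ with $\epsilon$ and depends only on the allowed quantities. The second estimate follows by exactly the same computation after replacing $A(\partial B_p(a))$ and $A(\partial B_N(a))$ with their $b$-counterparts and using $f(b) \geq (1-\epsilon)f(a)$ to relate $f(b)$ and $f(a)$.

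There is no real obstacle here: the content of the claim is just that a one-sided almost-equality in Bishop--Gromov propagates to a two-sided ratio control. The only bookkeeping point to be careful about is making the constant $\delta$ depend on $c, b-a, n$ rather than on $a$ and $b$ separately. Since the hypothesis $a\geq c$ and the comparison geometry of $\mathbb{H}^n$ ensure that $A(\partial B_N(a))/A(\partial B_N(b))$ is bounded in terms of $c$ and $b-a$ alone, this causes no trouble.
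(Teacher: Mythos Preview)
Your argument is correct and is exactly the unpacking of what the paper asserts: the paper's proof is the single line ``Claim \ref{cl1} follows from (\ref{3.11}) and Bishop--Gromov volume comparison,'' and your computation with the monotone ratio $f(r)=A(\partial B_p(r))/A(\partial B_N(r))$ is precisely how one makes that line explicit. Your care in noting that the bound on $A(\partial B_N(b))/A(\partial B_N(a))$ depends only on $c$, $b-a$, and $n$ (via $\sinh(a+t)/\sinh(a)\leq \cosh t+\coth(c)\sinh t$) is the one point that actually requires the hypothesis $a\geq c$, and matches the paper's stated dependence for $\delta$.
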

\begin{proof}
Claim \ref{cl1} follows from (\ref{3.11}) and Bishop-Gromov volume comparison.
\end{proof}

The following claim is due to J. Cheeger and T. Colding \cite{[CC]}:
\begin{claim}\label{cl2}
\begin{equation}\dashint_{A(a,b)}|\nabla g - \nabla \overline{g}|^2 \leq \delta.
 \end{equation}
\end{claim}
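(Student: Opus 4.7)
The plan is to run the standard Cheeger--Colding comparison scheme: show that $\overline g$, pulled back to $M$ via the exponential map, is subharmonic on $M$ with Laplacian pointwise controlled by the Laplacian-comparison defect $\overline{\Delta} r - \Delta r$; use integration by parts to bound the $L^2$-gap between $g$ and $\overline g$; then bound the integrated defect using hypothesis (\ref{3.11}).

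Concretely, writing $\overline g = \phi(r)$ with $\phi$ the radial profile of the model harmonic function, we have $\phi'' + \phi'\,\overline{\Delta} r = 0$ and $\phi' < 0$. A direct computation on $M$ (using $|\nabla r| = 1$) gives
$$
\Delta \overline g \,=\, \phi''(r) + \phi'(r)\,\Delta r \,=\, |\phi'(r)|\bigl(\overline{\Delta} r - \Delta r\bigr) \,\geq\, 0,
$$
where the final inequality is the Riemannian Laplacian comparison under $Ric \geq -(n-1)$, valid in the barrier sense and justified by Morse-function approximation as in the remark after Proposition \ref{prop1}. Setting $u := g - \overline g$, we have $u = 0$ on $\partial A(a,b)$ and $\Delta u = -\Delta \overline g \leq 0$, so the maximum principle forces $0 \leq u \leq 1$ (using $1 \leq \overline g, g \leq 2$). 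Integration by parts then yields
$$
\int_{A(a,b)} |\nabla u|^2 \,=\, -\int_{A(a,b)} u \,\Delta u \,=\, \int_{A(a,b)} u\, \Delta \overline g \,\leq\, C \int_{A(a,b)} \bigl(\overline{\Delta} r - \Delta r\bigr),
$$
with $C$ depending only on $\sup_{[a,b]}|\phi'|$, i.e.\ on $b-a, c, n$.

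For the integrated defect, the coarea formula and the first variation of area give $\int_{A(a,b)} \Delta r = A(\partial B_p(b)) - A(\partial B_p(a))$ and $\int_{A(a,b)} \overline{\Delta} r = \int_a^b A(\partial B_p(t))\, A_N'(t)/A_N(t)\, dt$. Setting $F(t) = A(\partial B_p(t))/A_N(t)$, integration by parts in $t$ collapses the difference to $-\int_a^b F'(t)\,A_N(t)\,dt$. Bishop--Gromov ensures $F' \leq 0$ and $F(a) \leq 1$, while (\ref{3.11}) combined with Bishop--Gromov bounds $F(a) - F(b)$ by a multiple of $\epsilon$. Dividing by $Vol(A(a,b))$, which Bishop--Gromov bounds below in terms of $c, b-a, n$ (this is where the assumption $a > c$ is used), produces $\dashint_{A(a,b)} |\nabla u|^2 \leq \delta$ with $\delta \to 0$ as $\epsilon \to 0$.

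The main technical obstacle, and the reason the estimate requires care beyond \cite{[CC]}, is the last step: tracking that $\delta$ depends only on $c, b-a, n, \epsilon$ and not on an upper bound for $b$. One must balance the (potentially exponential) growth of $A_N(t)$ in the hyperbolic model against both the smallness of $F(a) - F(b)$ and the lower bound on $Vol(A(a,b))$, exploiting that in the hyperbolic regime $A_N(t)$ and the annular volume scale similarly, so all the ratios normalize when one finally divides by $Vol(A(a,b))$.
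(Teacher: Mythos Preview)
Your argument is correct and follows essentially the same route as the paper: show $\Delta\overline g\ge 0$ by Laplacian comparison, integrate by parts using $g-\overline g=0$ on the boundary, and control $\dashint_{A(a,b)}\Delta\overline g$ via the area pinching (\ref{3.11}) together with Bishop--Gromov. The only cosmetic difference is that the paper applies the divergence theorem to $\Delta\overline g$ and invokes Claim~\ref{cl1} for the resulting boundary ratios, whereas you unwind the same computation through the coarea formula and the function $F(t)=A(\partial B_p(t))/A_N(t)$; your remark that ``$Vol(A(a,b))$ is bounded below'' should really read that the ratio $A(\partial B_p(a))/Vol(A(a,b))$ is bounded above, which is what you actually use (and what Claim~\ref{cl1} encodes).
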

\begin{proof}
By maximum principle, $|g-\overline g|\leq 2$ in $A(a, b)$. By Laplacian comparison and (\ref{3.-15}), we have $\Delta \overline g \geq 0$, since $\overline g$ is decreasing with respect to $r$.
Using integration by parts, we have
\begin{displaymath}
\begin{aligned}
\dashint_{A(a,b)}|\nabla g - \nabla \overline{g}|^2& = -\dashint_{A(a,b)}(g - \overline{g})\Delta(g-\overline{g})\\&
\leq 2\dashint_{A(a,b)} \Delta \overline g \\&\leq 2\frac{\int_{\partial A(a,b)}\frac{\partial \overline g}{\partial n}ds}{Vol(A(a,b))} \\& = 2(\overline g'(b)\frac{A(\partial B_p(b))}{Vol(A(a, b))} - \overline g'(a)\frac{A(\partial B_p(a))}{Vol(A(a, b))})\\&\leq \delta.
\end{aligned}
\end{displaymath}
In the last inequality, we have applied claim \ref{cl1}.
\end{proof}

The claim below comes from P. Li and R. Schoen in \cite{[LS]} and J. Cheeger and T. Colding \cite{[CC]}:
\begin{claim}\label{cl3}
For Dirichlet boundary condition on $A(a,b)$, the first eigenvalue $\lambda_1 \geq C$.
\end{claim}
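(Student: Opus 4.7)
\smallskip
\noindent\emph{Proof plan for Claim \ref{cl3}.} The plan is to establish a Poincar\'e inequality on the annulus, namely
\[
\int_{A(a,b)} u^2 \leq C(c, b-a, n) \int_{A(a,b)} |\nabla u|^2 \qquad \text{for all } u \in H_0^1(A(a,b)),
\]
from which $\lambda_1(A(a,b)) \geq 1/C$ follows directly from the Rayleigh quotient characterization. I would work in polar coordinates around $p$: outside the cut locus, $\exp_p$ is a diffeomorphism and the volume form is $J(t,v)\,dt\,dv$ on $(t,v) \in [0, c(v)) \times S^{n-1}$, where $c(v)$ denotes the cut distance along $v$.

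For each $v \in S^{n-1}$ with $c(v) \geq a$, the point $\gamma_v(a)$ lies on $\partial B_p(a) \subset \partial A(a,b)$, so $u(\gamma_v(a)) = 0$. Setting $\tilde u(t,v) = u(\gamma_v(t))$, the fundamental theorem of calculus together with Cauchy--Schwarz give
\[
\tilde u(t,v)^2 \leq (b-a) \int_a^{\min(c(v),b)} |\nabla u|^2(\gamma_v(s))\,ds
\]
for every $t \in [a, \min(c(v),b)]$; directions with $c(v) < a$ contribute nothing, as the ray $\gamma_v$ never enters $A(a,b)$. Integrating this pointwise bound in $t$ against $J(t,v)\,dt$, then in $v$, and trading $J(t,v)\,dt$ for $J(s,v)\,ds$ at the cost of the ratio $\sup_{[a,b]} J(\cdot,v) / \inf_{[a,b]} J(\cdot,v)$ produces the Poincar\'e inequality, provided that this Jacobian ratio is bounded by a function of $c, b-a, n$ alone.

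Under $Ric \geq -(n-1)$, Bishop's comparison gives $J(t,v)/J(s,v) \leq (\sinh t / \sinh s)^{n-1}$ for $t \geq s$, and the identity $\sinh(a+\tau) = \sinh a \cosh \tau + \cosh a \sinh \tau$ combined with $a \geq c$ yields $\sinh(a+\tau)/\sinh a \leq (1+\coth c)\,e^\tau$. The Jacobian ratio is therefore bounded by $((1+\coth c)\,e^{b-a})^{n-1}$, depending only on $c, b-a, n$, as required. The main technical point is precisely this dependence: the constant must not depend on $a$ and $b$ individually, only on the lower bound $c$ for $a$ and the thickness $b-a$, which is what applications such as Corollary \ref{cor2} demand. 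A cruder volume comparison using $\sinh b$ in the numerator would introduce a spurious dependence on $b$ and break the iteration in that corollary, so the trade of $J(t,v)\,dt$ for $J(s,v)\,ds$ must be performed in the form above.
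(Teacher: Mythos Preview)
Your radial Poincar\'e approach is sound and genuinely different from the paper's. The paper builds a barrier instead: it sets $f=(b+1-r)^\mu$ and uses Laplacian comparison to pick $\mu=\mu(c,b-a,n)$ large enough that $\Delta f\ge 1$ while $|\nabla f|\le C$ on $A(a,b)$; then for $h\in C_c^\infty(A(a,b))$ one integrates $\int h^2\le\int h^2\Delta f$ by parts and obtains $\int h^2\le 4C^2\int|\nabla h|^2$. Both routes yield a constant depending only on $c,b-a,n$; the paper's avoids any Jacobian bookkeeping, while yours is more hands-on.

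There is, however, a slip in your execution. You reduce matters to bounding $\sup_{[a,b']}J(\cdot,v)/\inf_{[a,b']}J(\cdot,v)$ and then quote Bishop for $J(t,v)/J(s,v)$ with $t\ge s$. But Bishop gives nothing when $t<s$: a Ricci lower bound imposes no pointwise lower bound on $\Delta r=\partial_r\log J$, so along rays whose cut point lies just beyond $b$ the Jacobian can drop by an arbitrarily large factor on $[a,b']$, and the $\sup/\inf$ ratio is not controlled. The fix is painless: in the Cauchy--Schwarz step keep the upper limit $t$ rather than $b'$, i.e.\ $\tilde u(t,v)^2\le (b-a)\int_a^{t}|\nabla u|^2(\gamma_v(s))\,ds$. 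Multiplying by $J(t,v)$, integrating in $t$, and applying Fubini gives
\[
\int_a^{b'}\tilde u(t,v)^2\,J(t,v)\,dt\ \le\ (b-a)\int_a^{b'}|\nabla u|^2(\gamma_v(s))\Bigl(\int_s^{b'}J(t,v)\,dt\Bigr)ds,
\]
and now only the ratio $J(t,v)/J(s,v)$ with $t\ge s$ is needed. Your Bishop estimate $J(t)/J(s)\le(\sinh t/\sinh s)^{n-1}\le((1+\coth c)\,e^{b-a})^{n-1}$ then bounds $\int_s^{b'}J(t,v)\,dt\le C(c,b-a,n)\,J(s,v)$ and finishes the argument.
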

\begin{proof}
Given a constant $\mu > 0$, define $f = (b+1-r)^{\mu}$ in $A(a,b)$. Then $$\Delta f = \mu(\mu-1)(b+1-r)^{\mu-2}-\mu(b+1-r)^{\mu-1}\Delta r,$$ $$|\nabla f| = \mu(b+1-r)^{\mu -1}.$$

Taking $\mu$ large, from the Laplacian comparison theorem, we may assume that
\begin{displaymath}
\Delta f \geq 1, |\nabla f| \leq C
\end{displaymath}
in $A(a, b)$.

For any function $h\in C^{\infty}_{c}(A(a,b))$,
\begin{displaymath}
\begin{aligned}
\int_{A(a,b)}h^2 &\leq \int_{A(a,b)}h^2\Delta f \\& = -2\int_{A(a,b)}h\langle \nabla f, \nabla h\rangle \\&
\leq 2C\int _{A(a,b)}|h||\nabla h| \\& \leq 2C (\int_{A(a,b)}h^2)^{\frac{1}{2}}(\int_{A(a,b)}|\nabla h|^2)^{\frac{1}{2}}
\end{aligned}
\end{displaymath}

Therefore $\lambda_1 \geq \frac{1}{4C^2}$. This proves claim \ref{cl3}.
\end{proof}
Combining claim \ref{cl2} and claim \ref{cl3}, one concludes that
\begin{equation}\label{3.12}
\dashint_{A(a,b)}|g-\overline{g}|^2 \leq \delta.
\end{equation}

Let $a_1 = \frac{4a+b}{5}, a_2=\frac{3a+2b}{5}, b_2=\frac{2a+3b}{5}, b_1=\frac{a+4b}{5}$, so $a < a_1 < a_2 < b_2< b_1 <b$.
Since $g$ is a positive harmonic function in $A(a,b)$, by the gradient estimate of Cheng and Yau \cite{[CY]}, we have
\begin{equation}\label{3.13}
|\nabla g| \leq Cg \leq 2C
\end{equation}
in $A(a_1, b_1)$. By a simple calculation, one can find a function $\psi(r)$ so that
$$\overline\nabla^2\psi(r) = \frac{1}{n}\overline\Delta(\psi(r)),$$ $$\psi(a) = 1, \psi'(a) = 1$$ in $\overline A(a,b)$. It is easy to see $\psi$ is strictly increasing with respect to $r$, therefore $\psi^{-1}$ exists. Moreover,
\begin{equation}\label{3.14}
|\psi|, |\psi'|, |\psi''|, |(\psi^{-1})'|, |(\psi^{-1})''| , |(\psi^{-1})'''|< C.
\end{equation}

For $x \in A_{a, b}$, define
\begin{equation}\label{3.-8}
u(x) = \psi\circ\overline{g}^{-1}\circ g(x), \overline u(x) = \psi\circ\overline{g}^{-1}\circ \overline g(x) = \psi(r(x)),
\end{equation}
then by (\ref{3.-6}), (\ref{3.13}), (\ref{3.14}),
\begin{equation}\label{3.15}
|\nabla u| \leq C
\end{equation}
in $A(a_1, b_1)$.

The Bochner formula for $u(x)$ is
$$\frac{1}{2}\Delta |\nabla u|^2 = |\nabla^2 u|^2 + \langle \nabla u, \nabla \Delta u\rangle + Ric(\nabla u, \nabla u).$$

Since $Ric \geq -(n-1)$,  we can rewrite it as
\begin{equation}\label{3.16}
\frac{1}{2}\Delta |\nabla u|^2 - \langle \nabla \Delta u, \nabla u\rangle - \frac{1}{n}(\Delta u)^2 +(n-1)\langle \nabla u, \nabla u\rangle \geq |\nabla^2 u - \frac{1}{n}\Delta u|^2.
\end{equation}

Now we want to get the estimate of the Hessian of $u$.

We will multiply both side of (\ref{3.16}) by a cut-off function and do integration by parts.

In \cite{[CC]}, Cheeger and Colding choose the cut-off function to be a function of $g$. To make that work, the  upper bound of $a$ is needed. To avoid this problem, we define the cut-off function $\overline\varphi$ to be a function of $r$, explicitly,
 \begin{equation}\label{3.17}
 \overline\varphi(r) = \left\{\begin{array}{rl} 0 & a \leq r < a_1 \\
 \frac{r-a_1}{a_2-a_1}  & a_1\leq r \leq a_2 \\
 1 & a_2 \leq r \leq b_2 \\
 \frac{b_1 - r}{b_1 - b_2} & b_2 < r < b_1 \\
 0 & b_1 \leq r \leq b. \end{array}\right.
 \end{equation}

Define \begin{equation}\label{3.18}
\varphi(x) = \overline\varphi\circ\overline{g}^{-1}\circ g
\end{equation} in $A(a, b)$. From claim \ref{cl2}, (\ref{3.-6}), (\ref{3.12}), (\ref{3.13}), it is easy to see that
\begin{equation}\label{3.-7}
\dashint_{A(a,b)} |\nabla \overline \varphi - \nabla \varphi|^2 +|\varphi - \overline\varphi|^2 \leq \delta.\end{equation}

Multiplying (\ref{3.16}) on both side by $\overline\varphi^2$ and using integration by parts, we find
\begin{equation}\label{3.19}
\begin{aligned}
\frac{1}{Vol(A(a_1, b_1))}\int_{A(a_1, b_1)}&-\frac{1}{2}\langle \nabla \overline\varphi^2, \nabla |\nabla u|^2\rangle + \overline\varphi^2 (\Delta u)^2 +
2\overline\varphi\Delta u \langle \nabla \overline\varphi, \nabla u\rangle - \frac{1}{n}\overline\varphi^2 (\Delta u)^2 \\&+(n-1)\overline\varphi^2 |\nabla u|^2 \geq
\frac{1}{Vol(A(a_1, b_1))}\int_{A(a_1, b_1)}\overline\varphi^2|\nabla^2 u - \frac{1}{n}\Delta u|^2.
\end{aligned}
\end{equation}

Let us write the first term of (\ref{3.19}) as
\begin{equation}\label{3.20}
\begin{aligned}
-\frac{1}{Vol(A(a_1, b_1))}\int_{A(a_1, b_1)}\frac{1}{2}\langle \nabla \overline\varphi^2, \nabla |\nabla u|^2\rangle &= -\frac{1}{2}\dashint_{A(a_1, b_1)}
(\overline\varphi^2)_i(u^2_j)_i \\& = -2\dashint_{A(a_1, b_1)}\overline\varphi\overline\varphi_iu_ju_{ji}\\& =
-2(\dashint_{A(a_1, b_1)}\overline\varphi(\overline\varphi_i- \varphi_i)u_ju_{ji} + \dashint_{A(a_1, b_1)}\overline\varphi\varphi_iu_ju_{ji}).
\end{aligned}
\end{equation}

We will estimate the two terms in the RHS of (\ref{3.20}) separately.
Using (\ref{3.15}), (\ref{3.-7}), we find
\begin{equation}\label{3.21}
\begin{aligned}
&|-2\dashint_{A(a_1, b_1)}\overline\varphi(\overline\varphi_i-\varphi_i)u_ju_{ji}| \\&\leq 2\dashint_{A(a_1, b_1)}|\overline\varphi||\nabla\overline\varphi-\nabla\varphi||\nabla u||u_{ji}| \\& \leq C\dashint_{A(a_1, b_1)} \overline\varphi|\nabla \varphi - \nabla \overline \varphi||\nabla^2 u| \\& \leq C(\dashint_{A(a_1, b_1)}|\nabla \overline \varphi - \nabla \varphi|^2)^{\frac{1}{2}}(\dashint_{A(a_1, b_1)}\overline\varphi^2|\nabla^2 u|^2)^{\frac{1}{2}}\\&\leq \delta (\dashint_{A(a_1, b_1)}\overline\varphi^2|\nabla^2 u|^2)^{\frac{1}{2}}\\& \leq \frac{1}{2}(\delta + \delta \dashint_{A(a_1,b_1)}\overline\varphi^2|\nabla^2 u|^2).
 \end{aligned}
\end{equation}

In (\ref{3.21}) we may assume that $\epsilon$ is so small that $\delta \leq \frac{1}{2}$.
For the other term on the RHS of (\ref{3.20}), integration by parts gives
\begin{equation}\label{3.22}
\begin{aligned}
-2\dashint_{A(a_1, b_1)}\overline\varphi\varphi_iu_ju_{ji} &= -\dashint_{A(a_1, b_1)}\overline\varphi \varphi_i(u^2_j)_i \\& = \dashint_{A(a_1, b_1)} \overline\varphi_i\varphi_iu^2_j + \dashint_{A(a_1, b_1)}\overline\varphi\varphi_{ii}u^2_j \\& = \dashint_{A(a_1, b_1)} \langle \nabla \varphi, \nabla \overline \varphi\rangle |\nabla u|^2 + \overline\varphi\Delta\varphi |\nabla u|^2.
 \end{aligned}
\end{equation}

Plugging (\ref{3.21}) and (\ref{3.22}) in (\ref{3.20}), we find that
\begin{equation}\label{3.23}
\begin{aligned}
&\dashint_{A(a_1, b_1)} \{\langle \nabla \overline\varphi, \nabla \varphi\rangle |\nabla u|^2 + \overline\varphi\Delta\varphi |\nabla u|^2 + \frac{1}{2}\delta + \frac{1}{2}\delta \overline\varphi^2|\nabla^2 u|^2+\overline\varphi^2 (\Delta u)^2 \\&+
2\overline\varphi\Delta u \langle \nabla \overline\varphi, \nabla u\rangle - \frac{1}{n}\overline\varphi^2 (\Delta u)^2 +(n-1)\overline\varphi^2 |\nabla u|^2\} \\&\geq
\dashint_{A(a_1, b_1)}\overline\varphi^2|\nabla^2 u - \frac{1}{n}\Delta u|^2.
\end{aligned}
\end{equation}

We can rewrite (\ref{3.23}) as
\begin{equation}\label{3.24}
\begin{aligned}
&\dashint_{A(a_1, b_1)} \{\langle \nabla \varphi, \nabla \overline \varphi\rangle |\nabla u|^2 + \overline\varphi\Delta\varphi |\nabla u|^2 + \frac{1}{2}\delta+ \frac{\delta}{2n}\overline\varphi^2(\Delta u)^2+\overline\varphi^2 (\Delta u)^2 \\&+
2\overline\varphi\Delta u \langle \nabla \overline\varphi, \nabla u\rangle - \frac{1}{n}\overline\varphi^2 (\Delta u)^2 +(n-1)\overline\varphi^2 |\nabla u|^2\} \\&\geq (1-\frac{\delta}{2})\dashint_{A(a_1, b_1)}\overline\varphi^2|\nabla^2 u - \frac{1}{n}\Delta u|^2 \\& \geq
(1-\frac{\delta}{2})\frac{Vol(A(a_2, b_2))}{Vol(A(a_1, b_1))}\dashint_{A(a_2, b_2)}|\nabla^2 u - \frac{1}{n}\Delta u|^2 \\& \geq C\dashint_{A(a_2, b_2)}|\nabla^2 u - \frac{1}{n}\Delta u|^2.
\end{aligned}
\end{equation}

We claim that up to a negligible error, we can replace all functions on the LHS of (\ref{3.24}) by the corresponding radical functions, namely,
$$\nabla\varphi \longrightarrow \nabla \overline\varphi,$$
$$\nabla u \longrightarrow \nabla\overline u,$$
$$\Delta \varphi \longrightarrow (\overline \varphi \circ {\overline g}^{-1})''|\nabla \overline g|^2,$$
$$\Delta u \longrightarrow (\overline \psi \circ {\overline g}^{-1})''|\nabla \overline g|^2.$$

To justify the above substitution we use the following standard lemma:

\begin{lemma} {Let $k_1,..., k_N, \overline{k}_1,..,\overline{k}_N$ be functions on a measure space $U$ such that for all $i$,
$$Sup(|k_i| + |\overline{k}_i|) \leq C,$$ $$\int_U |k_i - \overline{k}_i| \leq \epsilon,$$
then $$\int_U|k_1\cdot\cdot\cdot k_N - \overline{k}_1\cdot\cdot\cdot \overline{k}_N| \leq NC^{N-1}\epsilon.$$}
\end{lemma}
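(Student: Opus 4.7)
The plan is to use the standard telescoping identity
\begin{equation*}
k_1 \cdots k_N - \overline{k}_1 \cdots \overline{k}_N = \sum_{i=1}^N k_1 \cdots k_{i-1}\,(k_i - \overline{k}_i)\,\overline{k}_{i+1} \cdots \overline{k}_N,
\end{equation*}
with the convention that empty products are $1$. One checks the identity by noting that the sum telescopes: the $i$-th summand equals
\begin{equation*}
k_1 \cdots k_{i-1} k_i \overline{k}_{i+1} \cdots \overline{k}_N - k_1 \cdots k_{i-1} \overline{k}_i \overline{k}_{i+1} \cdots \overline{k}_N,
\end{equation*}
and consecutive terms cancel, leaving precisely $k_1 \cdots k_N - \overline{k}_1 \cdots \overline{k}_N$.

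With that identity in hand, the rest is a direct application of the triangle inequality and the two hypotheses. Since $\sup_U(|k_j|+|\overline{k}_j|) \leq C$ implies the pointwise bounds $|k_j|, |\overline{k}_j| \leq C$ on $U$, each factor in the $i$-th summand other than $k_i - \overline{k}_i$ is bounded by $C$, so the $i$-th summand is pointwise bounded by $C^{N-1}|k_i - \overline{k}_i|$. Integrating over $U$ and using $\int_U |k_i - \overline{k}_i| \leq \epsilon$ for each $i$ yields
\begin{equation*}
\int_U \bigl|k_1 \cdots k_N - \overline{k}_1 \cdots \overline{k}_N\bigr| \leq \sum_{i=1}^N C^{N-1}\int_U |k_i - \overline{k}_i| \leq N C^{N-1}\epsilon,
\end{equation*}
which is the desired bound.

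There is essentially no obstacle here: this is a purely measure-theoretic statement, and the telescoping-plus-triangle-inequality argument above is the standard one. The only minor point is the convention for the boundary indices $i=1$ and $i=N$ (empty products taken to be $1$), which is handled automatically. The lemma will then be applied in the next step of the main argument to justify replacing products of the functions $\varphi$, $\nabla u$, $\Delta u$, etc.\ on the left-hand side of \eqref{3.24} by their radial analogues $\overline{\varphi}$, $\nabla \overline{u}$, $(\overline{\psi}\circ\overline{g}^{-1})''|\nabla \overline{g}|^2$, etc., at the cost of an error that tends to $0$ with $\epsilon$.
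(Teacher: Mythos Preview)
Your proof is correct and is essentially identical to the paper's own argument: both use the telescoping decomposition of $k_1\cdots k_N - \overline{k}_1\cdots\overline{k}_N$ into a sum of $N$ terms, each containing a single factor $k_i-\overline{k}_i$ multiplied by $N-1$ bounded factors, then apply the triangle inequality and integrate. The only cosmetic difference is the direction of the telescoping (you place the $k$'s to the left of the difference and the $\overline{k}$'s to the right, while the paper does the reverse), which is immaterial.
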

\begin{proof} If we write
\begin{displaymath}
\begin{aligned}
k_1\cdot\cdot\cdot k_N - \overline{k}_1\cdot\cdot\cdot \overline{k}_N & = (k_1 - \overline{k}_1)k_2\cdot\cdot\cdot k_N\\& + \sum\limits_{i=1}^{N-2}\overline{k}_1\cdot\cdot\cdot \overline{k}_i(k_{i+1}-\overline{k}_{i+1})k_{i+2}\cdot\cdot\cdot k_N\\& +
\overline{k}_1\cdot\cdot\cdot \overline{k}_{N-1}(k_N - \overline{k}_N),
\end{aligned}
\end{displaymath}
then the conclusion is obvious.
\end{proof}
It is simply to see that the area of the geodesic sphere in $A(a,b)$ satisfies a pinching estimate:
 \begin{equation}\label{3.25}
 \frac{A(\partial B_N(b))}{A{(\partial B_N(a))}}
\geq \frac{A(\partial B_p(b))}{A(\partial B_p(a))} \geq \frac{A(\partial B_N(b))}{A{(\partial B_N(a))}}- \delta.
\end{equation}

Therefore, after the replacement of functions, the LHS of (\ref{3.24}) is very small.
Thus we have
\begin{equation}\label{3.26}
\dashint_{A(a_2, b_2)}|\nabla^2 u - \frac{1}{n}\Delta u|^2 \leq \delta.
\end{equation}

Since $\Delta g = 0$, it follows from (\ref{3.-6}), (\ref{3.12}), (\ref{3.13}), (\ref{3.14}), (\ref{3.-8}) and claim \ref{cl2} that
\begin{equation}\label{3.27}
\dashint_{A(a_1,b_1)} |\Delta u - (\overline \psi \circ {\overline g}^{-1})''|\nabla \overline g|^2|^2 \leq \delta.
\end{equation}
Also note
$$(\overline \psi \circ {\overline g}^{-1})''|\nabla \overline g|^2 = \overline{\Delta}\overline{u}$$ and
$$\overline{\nabla}^2 \overline u = \frac{1}{n}\overline{\Delta}\overline u,$$
therefore
\begin{equation}\label{3.28}
\dashint_{A(a_2,b_2)} |\nabla^2 u - \overline\nabla^2\overline{u}|^2 \leq \delta.
\end{equation}

Let us define
 \begin{equation}\label{3.29}
 w = \psi^{-1}\circ u = {\overline g}^{-1} \circ g,
\end{equation}

Putting claim \ref{cl2}, (\ref{3.12}), (\ref{3.14}), (\ref{3.15}), and (\ref{3.28}) together, we find
\begin{equation}\label{3.30}
\dashint_{A(a_2,b_2)} \sum\limits_{i,j}|w_{ij} - S_{ij}|^2 \leq \delta
\end{equation}
where we have used the fact that $S$ is the pull back of the Hessian of the distance function from $N$.

It follows from (\ref{3.14}) and (\ref{3.15}) that
\begin{equation}\label{3.31}
|\nabla w| \leq C.
\end{equation}
in $A(a_1, b_1)$.

Putting (\ref{3.-6}) and claim \ref{cl2} together, we find
\begin{equation}\label{3.32}
\dashint_{A(a_1,b_1)} |\nabla w - \nabla r|^2 \leq \delta.
\end{equation}

Combining (\ref{3.30}), (\ref{3.31}) and (\ref{3.32}), we complete the proof of proposition \ref{prop3}.
\end{proof}

Now we are ready to prove theorem \ref{thm1}.
If there exists a small constant $\epsilon > 0$ such that
\begin{equation}\label{3.-11}
\frac{A(\partial B_P(b))}{A(\partial B_P(a))} \geq \frac{A(\partial B_N(b)) }{A(\partial B_N(a)) }(1 - \epsilon),
\end{equation}
according to proposition \ref{prop3}, there exists a smooth function $w$ in $T = A(\frac{3a+2b}{5}, \frac{2a+3b}{5})$ such that (\ref{3.-9}) holds. For simplicity, we write $a_1 = \frac{3a+2b}{5}, b_1 = \frac{2a+3b}{5}, a_2 = \frac{2a_1+b_1}{3}, b_2 = \frac{a_1+2b_1}{3}$(note that our notation is a little bit different from proposition \ref{prop3}).

Applying (\ref{1.1}) to $w$, we find
\begin{equation}\label{3.33}
\frac{1}{2}\langle \nabla w, \nabla (\sum\limits_{\gamma \neq 1}w_{\gamma \overline{\gamma}})\rangle - (w_{1\overline{1}}\Delta w -|w_{\alpha \overline{\beta}}|^2+ Re (div Y))=0
\end{equation}
where $Y = \sum\limits_{\gamma \neq 1}w_{\overline{\alpha}}w_{\alpha \overline{\gamma}}e_{\gamma}$.
Let $\varphi(r)$ to be the Lipschitz cut-off function in the annulus $T$ depending only on the distance to $p$, given by \begin{equation}\label{3.34}
\varphi(r) = \left\{
\begin{array}{rl}\frac{r - a_1}{a_2 - a_1} & a_1 \leq r \leq a_2 \\
1 & a_2 < r < b_2 \\ \frac{b_1 - r}{b_1 - b_2} & b_2 \leq r \leq b_1  .\\
\end{array}\right.
\end{equation}

Multiplying $\varphi$ on both sides of (\ref{3.33}), we integrate in the annulus $T$ and take the average. It follows that
\begin{equation}\label{3.35}
\begin{aligned}
\frac{1}{2}\dashint_T\varphi\langle\nabla w, \nabla(\sum\limits_{\gamma \neq 1}w_{\gamma\overline{\gamma}})\rangle -
\dashint_T\{\varphi w_{1\overline{1}}\Delta w -|w_{\alpha \overline{\beta}}|^2+ \varphi Re (div Y)\} = 0.
\end{aligned}
\end{equation}

Using integration by parts, we find
\begin{equation}\label{3.36}
\begin{aligned}
\frac{1}{2}\dashint_T\varphi\langle\nabla w, \nabla(\sum\limits_{\gamma \neq 1}w_{\gamma\overline{\gamma}})\rangle
&= -\dashint_T\varphi\Delta w\sum\limits_{\gamma \neq 1}w_{\gamma\overline{\gamma}} - \frac{1}{2}\dashint_T\sum\limits_{\gamma \neq 1}w_{\gamma\overline{\gamma}}
\langle\nabla w, \nabla\varphi\rangle,
\end{aligned}
\end{equation}

\begin{equation}\label{3.37}
\begin{aligned}
\dashint_T\varphi (w_{1\overline{1}}\Delta w -|w_{\alpha \overline{\beta}}|^2)+ \varphi Re (div Y)
&=\dashint_T\varphi (w_{1\overline{1}}\Delta w -|w_{\alpha \overline{\beta}}|^2)\\&- Re\dashint_T
\sum\limits_{\gamma \neq 1}\varphi_{\gamma}w_{\overline{\alpha}}w_{\alpha \overline{\gamma}}.
\end{aligned}
\end{equation}
Note that in (\ref{3.36}) and (\ref{3.37}), $\Delta w$ is one half of the real Laplacian of $w$.

Therefore
\begin{equation}\label{3.38}
\begin{aligned}
&-\dashint_T\varphi\Delta w\sum\limits_{\gamma \neq 1}w_{\gamma\overline{\gamma}} - \frac{1}{2}\dashint_T\sum\limits_{\gamma \neq 1}w_{\gamma\overline{\gamma}}\langle\nabla w, \nabla\varphi\rangle-\\&\dashint_T\varphi (w_{1\overline{1}}\Delta w -|w_{\alpha \overline{\beta}}|^2)+ Re\dashint_T
\sum\limits_{\gamma \neq 1}\varphi_{\gamma}w_{\overline{\alpha}}w_{\alpha \overline{\gamma}}=0
\end{aligned}
\end{equation}

Following (\ref{3.-9}), we see that up to a negligible error, we can replace the functions in (\ref{3.38}) by the corresponding functions of $r$. Explicitly,
$$w_{ij} \rightarrow S_{ij}, \nabla w \rightarrow \nabla r.$$

 By (\ref{3.-11}), we can also replace the volume element by that of the real space form $N$.

In order to derive a contradiction to (\ref{3.-11}), we just need to prove that after the replacement, there is an explicit gap between
the LHS and the RHS of (\ref{3.38}). To prove this, it suffices to find a gap between the LHS and RHS of (\ref{1.1}) if we do the replacement:
\begin{equation}\label{3.39}
f_{\alpha\overline\beta}\rightarrow S_{\alpha\overline\beta}, \nabla f \rightarrow \nabla r.
\end{equation}

Using (\ref{3.-10}), we find the gap between the LHS and RHS in (\ref{1.1}) is
\begin{equation}\label{3.-12}
 \begin{aligned}
 &\frac{1}{2}\langle \nabla r, \nabla\sum\limits_{\gamma \neq 1}S_{\gamma \overline{\gamma}}\rangle-(S_{1\overline{1}}S_{\alpha\overline\alpha} -|S_{\alpha \overline{\beta}}|^2+ Re div Y)\\&   = -\frac{m-1}{2}(\coth r)' - \frac{1}{2}\coth r \frac{2m-1}{2}\coth r + (\frac{1}{2}\coth r)^2 + (m-1)(\coth r)^2 \\& = -\frac{m-1}{2}((\coth r)' - (\coth r)^2) \\& = \frac{m-1}{2}> 0.
 \end{aligned}
 \end{equation}

This proves theorem \ref{thm1} for $Ric \geq -(2m-1)$.
The proof for other cases are similar. We complete the proof of theorem \ref{thm1}.
\qed

\bigskip

\noindent\emph{Proof of theorem \ref{thm2}:}
We first prove theorem \ref{thm2} assuming $P$ is a pole, $Ric \geq -(2m-1)$ and $r > 5$. Let $n = 2m$.

For notational convenience, in the proof of theorem \ref{thm2}, $\delta$ denotes small positive constants depending only on $\epsilon, n$. $C$ denotes constants depending only on $n$. Moreover, $\lim\limits_{\epsilon\to 0}\delta = 0$.
We use $\overline r_{ij}$ to denote the Hessian of the distance function on $N$.
\begin{claim}\label{cl4}
{ $1-n \leq \Delta r \leq 100(n-1)$ for $r > 1$}.
\end{claim}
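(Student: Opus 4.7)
The claim is purely Riemannian: nothing about the K\"ahler structure enters, only $Ric \geq -(n-1)$ on the $n = 2m$ dimensional manifold $M$ together with the fact that $p$ is a pole (so $r$ is smooth on all of $M\setminus\{p\}$). I would prove the two inequalities separately.

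The upper bound $\Delta r \leq 100(n-1)$ for $r > 1$ is immediate from the classical real Laplacian comparison theorem: the Ricci lower bound yields $\Delta r \leq (n-1)\coth r$ wherever $r$ is smooth, and since $\coth 1 < 2$ this gives $\Delta r < 2(n-1) < 100(n-1)$ on $\{r > 1\}$. The constant $100$ is simply a loose convenient choice that will absorb all later estimates.

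The lower bound $\Delta r \geq 1 - n$ is the substantive part, and I would prove it by a scalar Riccati inequality along radial geodesics followed by an ODE blow-up argument. Fix a unit-speed geodesic $\gamma$ from $p$ and set $y(t) = \Delta r(\gamma(t))$, smooth on $(0,\infty)$ by the pole assumption. Applying the real Bochner identity to $r$ and using $|\nabla r| \equiv 1$ gives
\[
y'(t) \;=\; \langle \nabla r, \nabla \Delta r\rangle \;=\; -|\nabla^2 r|^2 - Ric(\nabla r, \nabla r).
\]
Since $\nabla^2 r$ annihilates the radial direction, Cauchy--Schwarz on the orthogonal $(n-1)$-plane gives $|\nabla^2 r|^2 \geq y^2/(n-1)$, and combined with the Ricci lower bound,
\[
y'(t) \;\leq\; -\frac{y(t)^2}{n-1} + (n-1).
\]

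To finish, I argue by contradiction: if $y(t_0) < -(n-1)$ for some $t_0 > 0$, compare $y$ with the solution $\phi$ of the corresponding ODE $\phi' = -\phi^2/(n-1) + (n-1)$, $\phi(t_0) = y(t_0)$. Since the right-hand side is $C^1$, the standard scalar ODE comparison gives $y \leq \phi$ on $[t_0,\infty)$. But $\phi = -(n-1)$ is the lower stationary zero of this ODE and $\phi'(t_0) < 0$, so $\phi$ decreases strictly; once $\phi$ is sufficiently negative, the term $(n-1)$ is negligible and $\phi' \leq -\phi^2/(2(n-1))$, which integrates to blow-up of $\phi$ to $-\infty$ at a finite time $t_* = t_0 + O(1/|y(t_0)|)$. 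Hence $y \to -\infty$ in finite time, contradicting the smoothness of $y$ on $(0,\infty)$. This forces $y \geq -(n-1) = 1 - n$ everywhere. The only delicate point is exactly this blow-up-versus-smoothness contradiction, and it is precisely where the pole hypothesis is essential: past a cut point $\Delta r$ can legitimately drop to $-\infty$, so without $p$ being a pole the argument would collapse. This matches how Claim 4 is used later in the proof of Theorem 2.
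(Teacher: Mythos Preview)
Your proof is correct and follows exactly the paper's approach: the upper bound via Laplacian comparison $\Delta r \leq (n-1)\coth r$, and the lower bound via the Riccati inequality $\partial_r \Delta r + \frac{1}{n-1}(\Delta r)^2 - (n-1) \leq 0$ together with an ODE blow-up argument using the pole hypothesis. You have simply made explicit the ``simple ODE analysis'' that the paper leaves to the reader.
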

\begin{proof}
By the Laplacian comparison theorem,
$$\Delta r \leq (n-1)\coth r \leq 100(n-1)$$ if $r > 1$. The Bochner formula gives
\begin{equation}\label{3.-13}
\frac{\partial \Delta r}{\partial r} + \frac{1}{n-1}(\Delta r)^2 - n + 1 \leq 0.
\end{equation}
If $\Delta r < 1 - n$, then after a simple ODE analysis, $\Delta r$ will blow up when $r$ is large. This is a contradiction.
\end{proof}
Choose an orthonormal frame $h_i$ ($i = 1, 2,.. 2m$) near $x \in M$ such that $h_1 = \nabla r$ and $Jh_{2\alpha-1}=h_{2\alpha}$ for $1\leq \alpha \leq m$. Define a unitary frame $e_{\alpha}$($\alpha = 1,...m$) so that $e_{\alpha} = \frac{1}{\sqrt{2}}(h_{2\alpha-1} - \sqrt{-1}h_{2\alpha})$ for all $\alpha$.
\begin{claim}\label{cl5}
 {Along the geodesic emanating from $p$, we have $\int\limits_r^{r+1} |r_{ij}|^2 < C$ for any $r \geq 1$.}
\end{claim}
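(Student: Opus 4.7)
The plan is to derive the claim directly from the standard Bochner formula applied to the distance function $r$, combined with the two-sided bound on $\Delta r$ provided by Claim \ref{cl4}.

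First, I would apply the usual Riemannian Bochner formula to $f = r$ on $M \setminus \{p\}$ (where $r$ is smooth because $p$ is a pole):
\begin{equation*}
\frac{1}{2}\Delta |\nabla r|^2 = |r_{ij}|^2 + \langle \nabla r, \nabla \Delta r\rangle + Ric(\nabla r, \nabla r).
\end{equation*}
Since $|\nabla r|^2 \equiv 1$, the left-hand side vanishes, and $\langle \nabla r, \nabla \Delta r\rangle = \frac{\partial \Delta r}{\partial r}$ along the geodesic from $p$. Rearranging,
\begin{equation*}
|r_{ij}|^2 = -\frac{\partial \Delta r}{\partial r} - Ric(\nabla r, \nabla r).
\end{equation*}

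Next, I would integrate this identity along the geodesic from $r$ to $r+1$. The derivative term integrates to $\Delta r(r) - \Delta r(r+1)$, yielding
\begin{equation*}
\int_r^{r+1} |r_{ij}|^2\, ds = \Delta r(r) - \Delta r(r+1) - \int_r^{r+1} Ric(\nabla r, \nabla r)\, ds.
\end{equation*}
By Claim \ref{cl4}, for $r \geq 1$ we have $\Delta r(r) \leq 100(n-1)$ and $\Delta r(r+1) \geq 1-n$, so the first two terms contribute at most $101(n-1)$. The hypothesis $Ric \geq -(n-1)$ gives $-Ric(\nabla r, \nabla r) \leq n-1$ pointwise, so the integral term contributes at most $n-1$. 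Thus $\int_r^{r+1} |r_{ij}|^2\, ds \leq 102(n-1) =: C$, which is precisely the claimed bound.

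There is no real obstacle here; the only point worth noting is that Claim \ref{cl4} is applied at both endpoints, so the lower bound on $\Delta r$ (which is the nontrivial half of Claim \ref{cl4}, coming from the ODE analysis of Bochner) is essential to control the boundary term $-\Delta r(r+1)$. Without a lower bound on $\Delta r$ one could only control the integral of $(\Delta r)^2/(n-1)$ via the inequality $|r_{ij}|^2 \geq (\Delta r)^2/(n-1)$, which is weaker. Once Claim \ref{cl4} is in hand, the Bochner identity for the distance function gives the bound on the full Hessian essentially for free.
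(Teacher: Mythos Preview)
Your proof is correct and essentially identical to the paper's: the paper writes the Bochner inequality as $\frac{\partial \Delta r}{\partial r} + |r_{ij}|^2 - (n-1) \leq 0$, integrates along the geodesic from $r$ to $r+1$, and invokes Claim~\ref{cl4} for the boundary terms, arriving at the same bound.
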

\begin{proof}
According to the Bochner formula,
\begin{equation}\label{3.40}
\frac{\partial \Delta r}{\partial r} + |r_{ij}|^2 -(n-1) \leq 0.
\end{equation}

The result follows from claim \ref{cl4} after we integrate (\ref{3.40}) along the geodesic from $r$ to $r+1$.
\end{proof}
Now we argue by contradiction for theorem \ref{thm2}. Given any $r_0 > 5$, assume the average of the Laplacian satisfies
\begin{equation}\label{3.41}
\dashint_{\partial B_p(r_0)} \Delta r \geq \Delta r_N(r_0) - \epsilon
 \end{equation}
 where $\epsilon$ is a small positive constant,
then $\partial B_p(r)$ can be decomposed into two parts, namely,
\begin{equation}\label{3.42}
\begin{aligned}
E_1 = \{x \in \partial B_p(r_0)| \Delta r \geq \Delta r_N(r_0) - \sqrt \epsilon\},\\
E_2 = \{x \in \partial B_p(r_0)| \Delta r < \Delta r_N(r_0) - \sqrt \epsilon\}.
\end{aligned}
\end{equation}
For $i = 1, 2$, define the cone as follows:
\begin{equation}\label{3.43}
F_i = \{\theta \in UT_p(M)|exp_p(r_0\theta)\in E_i\}.
\end{equation}
We also define
\begin{equation}\label{3.44}
E_i(r) = \{x\in M|x \in exp_p(rF_i)\}.
\end{equation}

\begin{claim}\label{cl6}
\begin{equation}\label{3.45}
\frac{A(E_1)}{A(\partial B_p(r_0))} \geq 1 - \delta.
\end{equation}
\end{claim}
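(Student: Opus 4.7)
The plan is to obtain the claim by a simple Chebyshev/Markov type argument, using the Laplacian comparison theorem as a one-sided pointwise bound.

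First I would observe that, since $\mathrm{Ric}\geq -(2m-1) = -(n-1)$ and $N$ is the real space form of constant sectional curvature $-1$, the standard Laplacian comparison theorem gives the pointwise upper bound
\begin{equation*}
\Delta r(x) \leq (n-1)\coth r_0 = \Delta r_N(r_0) \qquad \text{for all } x\in\partial B_p(r_0).
\end{equation*}
(Since $p$ is a pole, $r$ is smooth on $\partial B_p(r_0)$ and there are no cut-locus issues; otherwise this should be understood in the barrier sense.) Thus on the good set $E_1$ we have $\Delta r\le \Delta r_N(r_0)$, and on the bad set $E_2$ we have the strict inequality $\Delta r<\Delta r_N(r_0)-\sqrt{\epsilon}$ by definition.

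Next I would split the integral defining the average and estimate each piece with the corresponding bound:
\begin{equation*}
\Delta r_N(r_0) - \epsilon
\;\le\; \dashint_{\partial B_p(r_0)}\!\Delta r
\;\le\; \Delta r_N(r_0)\,\frac{A(E_1)}{A(\partial B_p(r_0))} + \bigl(\Delta r_N(r_0)-\sqrt{\epsilon}\bigr)\frac{A(E_2)}{A(\partial B_p(r_0))}.
\end{equation*}
Using $A(E_1)+A(E_2)=A(\partial B_p(r_0))$, the right-hand side simplifies to $\Delta r_N(r_0) - \sqrt{\epsilon}\,\dfrac{A(E_2)}{A(\partial B_p(r_0))}$. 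Rearranging,
\begin{equation*}
\frac{A(E_2)}{A(\partial B_p(r_0))} \;\le\; \sqrt{\epsilon},
\end{equation*}
and therefore $\dfrac{A(E_1)}{A(\partial B_p(r_0))} \ge 1-\sqrt{\epsilon}$, which is the desired estimate with $\delta=\sqrt{\epsilon}$.

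There is really no substantial obstacle here — this is a Chebyshev-type inequality exploiting the fact that $\Delta r$ is one-sided with respect to $\Delta r_N(r_0)$. The only point that needs a line of care is justifying the pointwise bound on all of $\partial B_p(r_0)$; the hypothesis that $p$ is a pole (invoked for this part of Theorem \ref{thm2}) guarantees $r$ is smooth and the Laplacian comparison is pointwise, so no smoothing or distributional interpretation is required.
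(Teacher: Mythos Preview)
Your proof is correct and is essentially identical to the paper's own argument: both use the Laplacian comparison $\Delta r\le\Delta r_N(r_0)$ on $E_1$, the defining bound on $E_2$, and the hypothesis (\ref{3.41}) in a Chebyshev-type estimate to conclude $A(E_2)/A(\partial B_p(r_0))\le\sqrt{\epsilon}$. Your version even makes the choice $\delta=\sqrt{\epsilon}$ explicit, which the paper leaves as ``a simple manipulation.''
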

\begin{proof}
From (\ref{3.41}), (\ref{3.42}), we have
\begin{equation}\label{3.46}
\begin{aligned}
\Delta r_N(r_0) - \epsilon &\leq \dashint_{\partial B_p(r_0)} \Delta r \\&
= \frac{\int_{E_1}\Delta r}{A(\partial(B_p(r_0)))} + \frac{\int_{E_2}\Delta r}{A(\partial(B_p(r_0)))}
\\&\leq \frac{A(E_1)}{A(\partial(B_p(r_0)))}\Delta r_N(r_0) + (1 - \frac{A(E_1)}{A(\partial(B_p(r_0)))})(\Delta r_N(r_0) - \sqrt \epsilon\}).
\end{aligned}
\end{equation}
After a simple manipulation of (\ref{3.46}), claim \ref{cl6} follows.
\end{proof}

\begin{claim}\label{cl7}
{If $r_0 - 2 \leq r \leq r_0$, then $\frac{A(E_1{r})}{A(\partial B_p(r))} \geq 1 - \delta$.}
\end{claim}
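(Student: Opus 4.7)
The plan is to reduce Claim \ref{cl7} to Claim \ref{cl6} by propagating the measure estimate along radial geodesics, using the two-sided bound on $\Delta r$ supplied by Claim \ref{cl4}. Since $p$ is a pole, the exponential map is a diffeomorphism and we may parametrize area on spheres by the Jacobian: write $J(r,\theta)$ for the volume density of $\exp_p$ along the geodesic $s\mapsto \exp_p(s\theta)$, so that for any measurable $F\subset UT_pM$,
\begin{equation*}
A(\exp_p(rF)) = \int_F J(r,\theta)\,d\theta,
\end{equation*}
and in particular $A(E_i(r)) = \int_{F_i} J(r,\theta)\,d\theta$ and $A(\partial B_p(r)) = \int_{UT_pM} J(r,\theta)\,d\theta$.

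The key identity is the standard first-variation formula $\partial_r \ln J(r,\theta) = \Delta r(\exp_p(r\theta))$, which upon integration gives
\begin{equation*}
J(r,\theta) = J(r_0,\theta)\exp\!\left(-\int_r^{r_0}\Delta r(s,\theta)\,ds\right).
\end{equation*}
Because $r_0 > 5$ and $r_0-2\le r \le r_0$, the radial segment lies inside $\{s>1\}$, so Claim \ref{cl4} gives $1-n \le \Delta r \le 100(n-1)$ along it. Plugging these bounds into the exponential, together with $r_0 - r \le 2$, I obtain the uniform two-sided comparison
\begin{equation*}
e^{-200(n-1)}J(r_0,\theta) \;\le\; J(r,\theta) \;\le\; e^{2(n-1)}J(r_0,\theta),
\end{equation*}
with constants depending only on $n$ and independent of $\theta$.

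The claim now follows by a direct ratio estimate: applying the lower bound on $J(r,\theta)$ on the denominator (integrated over $UT_pM$) and the upper bound on the numerator (integrated over $F_2$), I get
\begin{equation*}
\frac{A(E_2(r))}{A(\partial B_p(r))} \;\le\; e^{202(n-1)} \cdot \frac{A(E_2(r_0))}{A(\partial B_p(r_0))} \;\le\; C(n)\,\delta,
\end{equation*}
where the last inequality uses Claim \ref{cl6}. Taking complements yields $A(E_1(r))/A(\partial B_p(r)) \ge 1 - C(n)\delta$, which is the desired bound (with a new $\delta$, still depending only on $\epsilon, n$ and tending to $0$ as $\epsilon\to 0$).

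I do not anticipate a serious obstacle: the only subtle point is that the radial Jacobian bound must be \emph{uniform in} $\theta$, which is precisely what Claim \ref{cl4} provides since it holds pointwise for all $r>1$ on a manifold with a pole. Had $p$ not been a pole, one would have to worry about the cut locus and restrict to the set of unit directions where the geodesic is minimizing up to radius $r_0$, but the pole hypothesis eliminates this issue entirely.
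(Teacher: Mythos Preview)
Your proof is correct and follows essentially the same approach as the paper: both use the two-sided bound on $\Delta r$ from Claim \ref{cl4} to compare area elements (equivalently, the Jacobian $J(r,\theta)$) at radii $r$ and $r_0$, obtaining the uniform ratio bounds (your displayed inequality is exactly the pointwise version of the paper's (\ref{3.47})), and then feed in Claim \ref{cl6}. You are simply more explicit about the first-variation formula $\partial_r\ln J=\Delta r$ underlying the area comparison.
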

\begin{proof}
Since we have two bounds for $\Delta r$,
\begin{equation}\label{3.47}
\begin{aligned}
&\frac{A(E_1(r))}{A(E_1(r_0))} \geq \frac{1}{C},\\&
\frac{A(E_2(r))}{A(E_2(r_0))} \leq C
\end{aligned}
\end{equation}
for $r_0 - 2 \leq r \leq r_0$.
Claim \ref{cl7} follows from (\ref{3.45}) and (\ref{3.47}).
\end{proof}

At the point $q = exp_p(r\theta)$, choose an orthonormal frame $\{d_1,...d_n\}$ near $q$ such that $d_1=\nabla r$; $Jd_{2\alpha-1}=d_{2\alpha}$ for $1\leq \alpha \leq m$. Define a unitary frame $\{e_{\alpha}\}$($\alpha = 1,...m$) so that $e_{\alpha} = \frac{1}{\sqrt{2}}(d_{2\alpha-1} - \sqrt{-1}d_{2\alpha})$ for all $\alpha$.
\begin{claim}\label{cl8}
{ Along the geodesic from $p$ satisfying $\theta \in F_1$, $\int\limits_{r_0-2}^{r_0}|r_{ij} - \overline r_{ij}|^2 \leq \delta$.}
\end{claim}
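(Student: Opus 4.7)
My plan is to propagate the pointwise smallness of the Laplacian defect from $r_0$ backward across $[r_0-2,r_0]$ via Bochner together with Cauchy--Schwarz, and then finish with an algebraic decomposition of $|r_{ij}-\overline r_{ij}|^2$. Along the geodesic $\gamma(r)=\exp_p(r\theta)$ with $\theta\in F_1$, set $h(r):=\Delta r_N(r)-\Delta r(\gamma(r))$. Laplacian comparison gives $h\geq 0$, while the definition of $F_1$ gives $h(r_0)\leq\sqrt{\epsilon}$.

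For the propagation, Bochner yields $\partial_r\Delta r=-|r_{ij}|^2-Ric(\nabla r,\nabla r)$; combined with the Cauchy--Schwarz bound $|r_{ij}|^2\geq(\Delta r)^2/(n-1)$ and $Ric\geq -(n-1)$, this gives
\[
\partial_r\Delta r\leq(n-1)-\frac{(\Delta r)^2}{n-1},
\]
and the analogous identity in $N$ holds as an equality for $\Delta r_N$. Subtracting produces
\[
h'(r)\geq -K(r)\,h(r),\qquad K(r):=\frac{\Delta r+\Delta r_N}{n-1}.
\]
By Claim \ref{cl4} and the explicit form $\Delta r_N=(n-1)\coth r$, $|K|$ is bounded by a constant $C_0=C_0(n)$ on $\{r\geq 1\}$. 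A standard Gr\"onwall comparison run backward from $r_0$ then yields
\[
h(r)\leq e^{2C_0}\sqrt{\epsilon}=:C_1\sqrt{\epsilon}\qquad\text{for all }r\in[r_0-2,r_0].
\]

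For the decomposition, using $r_{11}=r_{1j}=0$ (from $|\nabla r|=1$) and $\overline r_{ij}=\coth r(\delta_{ij}-\delta_{i1}\delta_{j1})$, a direct expansion gives
\[
|r_{ij}-\overline r_{ij}|^2=\bigl(|r_{ij}|^2-|\overline r_{ij}|^2\bigr)+2\coth r\cdot h(r).
\]
Bochner in $M$ and its equality version in $N$ yield $|r_{ij}|^2-|\overline r_{ij}|^2=h'(r)-\bigl(Ric(\nabla r,\nabla r)+(n-1)\bigr)$; integrating over $[r_0-2,r_0]$ and using $Ric+(n-1)\geq 0$ together with $h(r_0-2)\geq 0$ gives
\[
\int_{r_0-2}^{r_0}\bigl(|r_{ij}|^2-|\overline r_{ij}|^2\bigr)\,dr\;\leq\;h(r_0)\;\leq\;\sqrt{\epsilon}.
\]
The second term is bounded by $4(\max_{[r_0-2,r_0]}\coth r)\,C_1\sqrt{\epsilon}$, which is $O(\sqrt{\epsilon})$ uniformly in $r_0\geq 5$. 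Summing produces the desired bound $\int_{r_0-2}^{r_0}|r_{ij}-\overline r_{ij}|^2\,dr\leq\delta(\epsilon,n)\to 0$.

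The main obstacle is the backward propagation step: without pointwise control of $h$ on the whole interval, the factor $\coth r\cdot h$ could contribute something large on $[r_0-2,r_0)$ even though $h$ is small at $r_0$. The pole assumption is essential so that $\gamma$ has no cut point on $[r_0-2,r_0]$ and Bochner holds classically, and the uniform bound of Claim \ref{cl4} is what makes $C_0$ (and hence $\delta$) independent of $r_0$; everything else is bookkeeping inside the Bochner identity.
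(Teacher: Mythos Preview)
Your proof is correct and follows essentially the same route as the paper: backward propagation of the Laplacian defect $h=\Delta r_N-\Delta r$ over $[r_0-2,r_0]$ via the Riccati/Bochner inequality (the paper phrases this as ``a simple analysis of (\ref{3.-13})'' rather than invoking Gr\"onwall explicitly), followed by integrating the Bochner formula over that interval. The only cosmetic difference is the final bookkeeping: the paper decomposes $|r_{ij}-\overline r_{ij}|^2$ into the off-diagonal part $\sum_{i\neq j}r_{ij}^2$ plus the trace-free diagonal part $\sum_{i\neq 1}(r_{ii}-\frac{\Delta r}{n-1})^2$ and bounds these directly via (\ref{3.51}), then absorbs the remaining $h^2/(n-1)$ term using (\ref{3.50}); your identity $|r_{ij}-\overline r_{ij}|^2=(|r_{ij}|^2-|\overline r_{ij}|^2)+2\coth r\cdot h$ packages the same information more compactly.
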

\begin{proof}
 We write the Bochner formula as
\begin{equation}\label{3.48}
\frac{\partial \Delta r}{\partial r} + \frac{1}{n-1}(\Delta r)^2 + \sum\limits_{i \neq j}r_{ij}^2 + \sum\limits_{i\neq 1}(\frac{\Delta r}{n-1} - r_{ii})^2 -n+1 \leq 0.
\end{equation}

According to (\ref{3.42}), we have
\begin{equation}\label{3.49}
\Delta r(r_0, \theta) \geq \Delta r_N(r_0) - \delta
\end{equation}
for $\theta \in F_1$.
After a simple analysis of (\ref{3.-13}), it follows that
\begin{equation}\label{3.50}
|\Delta r_N(r) - \Delta r(r, \theta)| < \delta
\end{equation}
for $r_0 - 2 \leq r \leq r_0$.
Integrating (\ref{3.48}) along the geodesic from $r_0 - 2$ to $r_0$,  in view of (\ref{3.50}), we find that
\begin{equation}\label{3.51}
\begin{aligned}
\int\limits_{r_0 - 2}^{r_0} (\sum\limits_{i \neq j}r_{ij}^2 + \sum\limits_{i\neq 1}(\frac{\Delta r}{n-1} - r_{ii})^2)
&\leq \int\limits_{r_0 - 2}^{r_0}(-\frac{\partial \Delta r}{\partial r} + n - 1 -\frac{1}{n-1}(\Delta r)^2 \\& =
 \Delta r(r_0-2, \theta) - \Delta r(r_0, \theta) +2(n-1) - \frac{1}{n-1}\int\limits_{r_0 - 2}^{r_0}(\Delta r)^2 \\&\leq \delta.
\end{aligned}
\end{equation}

Combining (\ref{3.50}) and (\ref{3.51}), claim \ref{cl8} follows.
\end{proof}

Applying (\ref{1.1}) to the distance function to $p$, we find
\begin{equation}\label{3.52}
\begin{aligned}
\frac{1}{2}\langle \nabla r, \nabla (\sum\limits_{\gamma \neq 1}r_{\gamma \overline{\gamma}})\rangle = r_{1\overline{1}}\Delta r -|r_{\alpha \overline{\beta}}|^2+ Re (div Y).
 \end{aligned}
\end{equation}
where $Y =\sum\limits_{\gamma \neq 1}r_{\overline{\alpha}}r_{\alpha \overline{\gamma}}e_{\gamma}$.
It is simple to see $$div|_M Y = div|_{\partial B_p(r)}Y.$$

Thus after the integration of (\ref{3.52}) on the geodesic sphere $\partial B_p(r)$, we find
 \begin{equation}\label{3.53}
 \int_{\partial B_p(r)} \frac{\partial \sum\limits_{\alpha \neq 1} r_{\alpha\overline{\alpha}}}{\partial r} = \int_{\partial B_p(r)} -2\sum\limits_{\alpha,\beta}|r_{\alpha\overline{\beta}}|^2+2r_{\alpha\overline{\alpha}}r_{1\overline{1}}.
 \end{equation}

For notational simplicity, we use $\int$ to denote $\int_{\partial B_p(r)}$, $\dashint$ to denote the average of $\int_{\partial B_p(r)}$.
Taking the average of (\ref{3.53}) on $\partial B_p(r)$, we get
  \begin{equation}\label{3.54}
 \frac{\partial \dashint \sum\limits_{\alpha \neq 1} r_{\alpha\overline{\alpha}}}{\partial r} = -2\dashint \sum\limits_{\alpha,\beta}|r_{\alpha\overline{\beta}}|^2+2\dashint r_{1\overline{1}}\Delta r  + 2\dashint \sum\limits_{\alpha\neq1}r_{\alpha\overline{\alpha}}\Delta r - 2\dashint \sum\limits_{\alpha\neq1} r_{\alpha\overline{\alpha}} \dashint \Delta r.
  \end{equation}

Integrating (\ref{3.54}) from $r-1$ to $r$, we find
\begin{equation}\label{3.55}
\begin{aligned}
\\&\dashint_{\partial B_p(r)}\sum\limits_{\alpha \neq 1} r_{\alpha\overline{\alpha}}dA -
\dashint_{\partial B_P(r-1)}\sum\limits_{\alpha \neq 1} r_{\alpha\overline{\alpha}}dA\\& = \int\limits_{r-1}^{r}
(-2\dashint \sum\limits_{\alpha,\beta}|r_{\alpha\overline{\beta}}|^2dA_t+2\dashint r_{1\overline{1}}\Delta rdA_t
\\& + 2\dashint \sum\limits_{\alpha\neq1}r_{\alpha\overline{\alpha}}\Delta rdA_t - 2\dashint \sum\limits_{\alpha\neq1}r_{\alpha\overline{\alpha}}dA_t \dashint \Delta rdA_t)dt.
\end{aligned}
\end{equation}

Integration of (\ref{3.55}) from $r_0 - 1$ to $r_0$ yields
\begin{equation}\label{3.56}
\begin{aligned}
&\int_{r_0-1}^{r_0}\dashint_{\partial B_p(r)}\sum\limits_{\alpha \neq 1} r_{\alpha\overline{\alpha}}dAdr -
\int_{r_0-1}^{r_0}\dashint_{\partial B_p(r-1)}\sum\limits_{\alpha \neq 1} r_{\alpha\overline{\alpha}}dAdr \\&= \int_{r_0-1}^{r_0}\int\limits_{r-1}^{r}
(-2\dashint \sum\limits_{\alpha,\beta}|r_{\alpha\overline{\beta}}|^2dA_t+2\dashint r_{1\overline{1}}\Delta rdA_t\\&
 + 2\dashint \sum\limits_{\alpha\neq1}r_{\alpha\overline{\alpha}}\Delta rdA_t - 2\dashint \sum\limits_{\alpha\neq1}r_{\alpha\overline{\alpha}}dA_t \dashint \Delta rdA_t) dtdr.
\end{aligned}
\end{equation}

We come to the estimate of the first term in the LHS of (\ref{3.56}).
\begin{equation}\label{3.57}
\begin{aligned}
&\int_{r_0-1}^{r_0}\dashint_{\partial B_p(r)}\sum\limits_{\alpha \neq 1} r_{\alpha\overline{\alpha}}dAdr\\& =
\int_{r_0-1}^{r_0}\frac{\int_{E_1(r)}\sum\limits_{\alpha \neq 1}(r_{\alpha\overline{\alpha}} - \overline{r}_{\alpha\overline{\alpha}})dA}{A(\partial B_p(r))} dr+ \int_{r_0-1}^{r_0} \frac{\int_{E_2(r)}\sum\limits_{\alpha \neq 1}r_{\alpha\overline{\alpha}}dA}{A(\partial B_p(r))}dr\\& + \int_{r_0-1}^{r_0}\frac{\int_{E_1(r)}\sum\limits_{\alpha \neq 1}\overline {r}_{\alpha\overline{\alpha}}dA}{A(\partial B_p(r))}dr.
\end{aligned}
\end{equation}

\begin{claim}\label{cl9}
\begin{equation}\label{3.58}
\begin{aligned}
&|\int_{r_0-1}^{r_0}\frac{\int_{E_1(r)}\sum\limits_{\alpha \neq 1}(r_{\alpha\overline{\alpha}} - \overline{r}_{\alpha\overline{\alpha}})dA}{A(\partial B_p(r))} dr| \leq \delta, \\&
|\int_{r_0-1}^{r_0}\frac{\int_{E_2(r)}\sum\limits_{\alpha \neq 1}r_{\alpha\overline{\alpha}}dA}{A(\partial B_p(r))}dr|
\leq \delta,\\&
|\int_{r_0-1}^{r_0}\frac{\int_{E_1(r)}\sum\limits_{\alpha \neq 1}\overline {r}_{\alpha\overline{\alpha}}dA}{A(\partial B_p(r))}dr - \int_{r_0-1}^{r_0}\overline {r}_{\alpha\overline{\alpha}}dr| \leq \delta.
\end{aligned}
\end{equation}
\end{claim}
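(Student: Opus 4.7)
The plan is to prove each of the three inequalities of Claim~\ref{cl9} by working in geodesic polar coordinates at $p$ and reducing to Fubini combined with Claims~\ref{cl4}--\ref{cl8}. Write $dA = J(r,\theta)\, d\theta$ on $\partial B_p(r)$, where $d\theta$ is the standard measure on $UT_p(M)$ and $J$ is the Jacobian of $\exp_p$. Since $\partial_r \log J = \Delta r$ is uniformly bounded on $r \geq 1$ by Claim~\ref{cl4}, the densities $J(r,\theta)/A(\partial B_p(r))$ for $r \in [r_0-2, r_0]$ are all comparable up to a constant $C$ to the probability density $d\mu_0 := J(r_0,\theta)\, d\theta/A(\partial B_p(r_0))$ supported on $F_1 \cup F_2$. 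This uniform comparability is what allows estimates to pass between different radii without extra loss.

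For the first inequality, Fubini combined with the Jacobian comparison yields
\begin{equation*}
\left|\int_{r_0-1}^{r_0}\frac{\int_{E_1(r)}\sum_{\alpha\neq 1}(r_{\alpha\overline\alpha}-\overline r_{\alpha\overline\alpha})\, dA}{A(\partial B_p(r))}\, dr\right| \le C\int_{F_1}\int_{r_0-1}^{r_0}\sum_{\alpha\neq 1}|r_{\alpha\overline\alpha}-\overline r_{\alpha\overline\alpha}|\, dr\, d\mu_0(\theta).
\end{equation*}
Translating complex Hessian entries into real ones via the standard K\"ahler identity, for each fixed $\theta \in F_1$ a Cauchy--Schwarz in $r$ together with Claim~\ref{cl8} gives $\int_{r_0-1}^{r_0}|r_{\alpha\overline\alpha}-\overline r_{\alpha\overline\alpha}|\, dr \leq C\sqrt\delta$, and integration in $\theta$ against the probability measure $\mu_0$ closes the bound.

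For the second inequality, Cauchy--Schwarz on each geodesic sphere gives
\begin{equation*}
\frac{1}{A(\partial B_p(r))}\int_{E_2(r)}\sum_{\alpha\neq 1}|r_{\alpha\overline\alpha}|\, dA \le \Bigl(\dashint_{\partial B_p(r)}\sum_{\alpha\neq 1}|r_{\alpha\overline\alpha}|^2\, dA\Bigr)^{1/2}\Bigl(\frac{A(E_2(r))}{A(\partial B_p(r))}\Bigr)^{1/2}.
\end{equation*}
The second factor is $\le \sqrt\delta$ by Claim~\ref{cl7}, while Fubini combined with the geodesic-wise bound of Claim~\ref{cl5} and the Jacobian comparison produces $\int_{r_0-1}^{r_0}\dashint_{\partial B_p(r)}|r_{ij}|^2\, dA\, dr \leq C$, so one more Cauchy--Schwarz in $r$ finishes the estimate. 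For the third inequality, the key observation is that $\overline r_{\alpha\overline\alpha}$ depends only on $r$, hence
\begin{equation*}
\frac{\int_{E_1(r)}\sum_{\alpha\neq 1}\overline r_{\alpha\overline\alpha}\, dA}{A(\partial B_p(r))} = \Bigl(\sum_{\alpha\neq 1}\overline r_{\alpha\overline\alpha}(r)\Bigr)\frac{A(E_1(r))}{A(\partial B_p(r))};
\end{equation*}
since $|A(E_1(r))/A(\partial B_p(r))-1| \leq \delta$ by Claim~\ref{cl7} and $|\overline r_{\alpha\overline\alpha}| \leq C$ for $r \geq 1$, the desired $O(\delta)$ discrepancy follows after integration in $r$.

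The main obstacle is essentially bookkeeping: ensuring that a single $\delta$ depending only on $\epsilon$ and $n$ propagates cleanly through every Fubini and Cauchy--Schwarz step. The three ingredients that make this uniformity work are the two-sided bound $|\Delta r| \leq C$ from Claim~\ref{cl4} (which controls the Jacobian ratios), the uniform pinching of $\Delta r$ on $F_1$ that renders the $\delta$ in Claim~\ref{cl8} independent of $\theta$, and the uniform smallness of $A(E_2(r))/A(\partial B_p(r))$ from Claim~\ref{cl7}.
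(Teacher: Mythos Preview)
Your proposal is correct and follows essentially the same route as the paper: Jacobian comparability on $[r_0-2,r_0]$ from the two-sided Laplacian bound (Claim~\ref{cl4}), Fubini to switch between spherical and radial integration, Cauchy--Schwarz in $r$, and then Claims~\ref{cl5}--\ref{cl8} to close each estimate. The first and third inequalities are handled identically to the paper. For the second inequality your organization differs slightly: you apply Cauchy--Schwarz angularly on each sphere first (extracting $\sqrt{\delta}$ from $A(E_2(r))/A(\partial B_p(r))$ via Claim~\ref{cl7}) and then bound $\int_{r_0-1}^{r_0}\dashint |r_{ij}|^2$ via Fubini and Claim~\ref{cl5}, whereas the paper goes Fubini-first to $F_2$, applies Cauchy--Schwarz only in $r$, bounds the radial integral geodesic-wise by Claim~\ref{cl5}, and extracts the full $\delta$ from the smallness of $A(E_2)/A(\partial B_p(r_0))$ (Claim~\ref{cl6}); both arguments are valid and yield the same conclusion up to renaming $\delta$.
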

\begin{proof}
By claim \ref{cl4}, for $r_0 - 2 \leq r \leq r_0$, we have the relation
\begin{equation}\label{3.59}
\frac{1}{C} \leq \frac{dA(\partial B_p(r))(\theta)}{dA(\partial B_p(r_0))(\theta)} \leq C
\end{equation}
where $dA(\partial B_p(r))(\theta)$ is the area element of $\partial B_p(r)$.
Therefore we get
\begin{equation}\label{3.60}
\begin{aligned}
\int_{r_0-1}^{r_0}\frac{\int_{E_1(r)}\sum\limits_{\alpha \neq 1}(r_{\alpha\overline{\alpha}} - \overline{r}_{\alpha\overline{\alpha}})dA}{A(\partial B_p(r))dA}dr &\leq \int_{r_0-1}^{r_0}\frac{\int_{E_1(r)}\sum\limits_{\alpha \neq 1}|r_{\alpha\overline{\alpha}} - \overline{r}_{\alpha\overline{\alpha}}|dA}{A(\partial B_p(r))}dr \\& \leq C\frac{\int_{r_0-1}^{r_0}\int_{E_1(r)}|r_{\alpha\overline{\alpha}} - \overline{r}_{\alpha\overline{\alpha}}|dA}
{A(\partial B_p(r_0))}dr\\& \leq C \frac{\int_{F_1}\int_{r_0-1}^{r_0}|r_{\alpha\overline{\alpha}} - \overline{r}_{\alpha\overline{\alpha}}|drdA_{r_0}}{A(\partial B_p(r_0))} \\& \leq C \frac{\int_{F_1(r)}(\int_{r_0-1}^{r_0}\sum\limits_{\alpha \neq 1}|r_{\alpha\overline{\alpha}}-\overline r_{\alpha\overline{\alpha}}|^2dr)^{\frac{1}{2}}dA_{r_0}}{A(\partial B_p(r_0))} \\&\leq \delta.
\end{aligned}
\end{equation}

In the last inequality, we have used claim \ref{cl6} and claim \ref{cl8}. Similarly
\begin{equation}\label{3.61}
\begin{aligned}
\int_{r_0-1}^{r_0}\frac{\int_{E_2(r)}\sum\limits_{\alpha \neq 1}r_{\alpha\overline{\alpha}}dA}{A(\partial B_p(r))}dr &\leq \int_{r_0-1}^{r_0}\frac{\int_{E_2(r)}\sum\limits_{\alpha \neq 1}|r_{\alpha\overline{\alpha}}|dA}{A(\partial B_p( r))}dr\\&
\leq C\frac{\int_{r_0-1}^{r_0}\int_{E_2(r)}\sum\limits_{\alpha \neq 1}|r_{\alpha\overline{\alpha}}|dA}{A(\partial B_p( r_0))}dr\\& \leq C \frac{\int_{F_2(r)}\int_{r_0-1}^{r_0}\sum\limits_{\alpha \neq 1}|r_{\alpha\overline{\alpha}}|drdA_{r_0}}{A(\partial B_p(r_0))} \\& \leq C \frac{\int_{F_2(r)}(\int_{r_0-1}^{r_0}\sum\limits_{\alpha \neq 1}|r_{\alpha\overline{\alpha}}|^2dr)^{\frac{1}{2}}dA_{r_0}}{A(\partial B_p(r_0))} \\&\leq \delta.
\end{aligned}
\end{equation}

\begin{equation}\label{3.62}
\begin{aligned}
|\int_{r_0-1}^{r_0}\frac{\int_{E_1(r)}\sum\limits_{\alpha \neq 1}\overline {r}_{\alpha\overline{\alpha}}dA}{A(\partial B_p(r))}dr - \int_{r_0-1}^{r_0}\sum\limits_{\alpha \neq 1}\overline {r}_{\alpha\overline{\alpha}}dr|& =
\int_{r_0-1}^{r_0} (1 - \frac{A(E_1(r))}{A(\partial B_p(r))})\sum\limits_{\alpha \neq 1}\overline {r}_{\alpha\overline{\alpha}}dr \\& \leq \delta.
\end{aligned}
\end{equation}

This completes the proof of claim \ref{cl9}.
\end{proof}

Claim \ref{cl9} says up to a negligible error, we can replace $r_{\alpha\overline\beta}$ by $\overline r_{\alpha\overline\beta}$ in the first term of (\ref{3.56}) , where $\overline r_{\alpha\overline\beta}$ is the complexification of $\overline r_{ij}$. Similarly, we can apply the replacement to all other terms in (\ref{3.56}) with a negligible error. In order to get a contradiction to (\ref{3.41}), we just need to prove that after the replacement, there is an explicit gap between the LHS and the RHS of (\ref{3.56}). It suffices to find a gap between of LHS and RHS of (\ref{1.1}) after the replacement $f_{\alpha\overline\beta} \rightarrow \overline r_{\alpha\overline\beta}, \nabla f \rightarrow \nabla r$. The computation of the gap is the same as (\ref{3.-12}).

  We have thus proved theorem \ref{thm2} when $p$ is a pole, $Ric \geq -(2m-1)$, $r > 5$. The proof of the general case is similar. \qed

\section{\bf{ Average Laplacian comparison for some special cases}}
\noindent\emph{Proof of theorem \ref{thm3}:}

For simplicity, we write $r_M(x)$ as $r$. Near a point $q \in M$, choose a unitary frame $\{e_{\alpha}\}\in T^{1,0}(M)$ such that $e_1 = \frac{1}{\sqrt{2}}(\nabla r - \sqrt{-1}J\nabla r)$. Since the metric is unitary invariant, it is simple to see $r_{\alpha\overline{\beta}}=0$ if $\alpha \neq \beta$; $r_{\alpha\overline{\alpha}} = r_{\beta\overline{\beta}}$ if $\alpha \neq 1$ and $\beta \neq 1$; $r_{\alpha\beta}=0$ unless $\alpha = \beta =1$; $r_{11} = -r_{1\overline 1}$.

Using $r_{1\overline{1}} = \Delta r - (m-1)r_{2\overline{2}}$ and (\ref{2.1}), we find
\begin{equation}\label{4.1}
\frac{1}{2}R_{1\overline{1}} + \frac{\partial \Delta r}{\partial r} + (m-1)|r_{2\overline{2}}|^2 + 2(\Delta r - (m-1)r_{2\overline{2}})^2=0.
\end{equation}

Applying (\ref{1.1}) to $r$, after a simplification, we have
\begin{equation}\label{4.2}
\frac{\partial r_{2\overline{2}}}{\partial r} = 2r_{2\overline{2}}(\Delta r - m r_{2\overline{2}}).
\end{equation}

We use $\overline\Delta r$ and $\overline r_{\alpha\overline\beta}$ to denote the Laplacian and complex Hessian of the distance function in $M_k$. Let us write down the equations for $M_k$ analogue to (\ref{4.1}) and (\ref{4.2}). Explicitly,
\begin{equation}\label{4.3}
\frac{1}{2}(m+1)k + \frac{\partial \overline{\Delta} r}{\partial r} + (m-1)|\overline{r}_{2\overline{2}}|^2 + 2(\overline{\Delta} r - (m-1)\overline{r}_{2\overline{2}})^2=0,
\end{equation}
\begin{equation}\label{4.4}
\frac{\partial \overline{r}_{2\overline{2}}}{\partial r} = 2\overline{r}_{2\overline{2}}(\overline{\Delta} r - m \overline{r}_{2\overline{2}}).
\end{equation}

We shall regard $r_{\alpha\overline{\beta}}$ and $\overline r_{\alpha\overline{\beta}}$ as functions of $r$.
(\ref{4.1}), (\ref{4.3}) give
\begin{equation}\label{4.5}
\begin{aligned}
(\overline{\Delta} r - \Delta r)' &\geq \frac{1}{2}(R_{1\overline{1}} - (m+1)k) -f(r)| \overline{\Delta} r - \Delta r| -
 f(r) |\overline{r}_{2\overline{2}} - r_{2\overline{2}}| \\&\geq -f(r)| \overline{\Delta} r - \Delta r| -
 f(r) |\overline{r}_{2\overline{2}} - r_{2\overline{2}}|.
 \end{aligned}
 \end{equation}

Similarly (\ref{4.2}), (\ref{4.4}) give
\begin{equation}\label{4.6}
 (\overline{r}_{2\overline{2}} - r_{2\overline{2}})' \geq -f(r)| \overline{\Delta} r - \Delta r| -
 f(r) |\overline{r}_{2\overline{2}} - r_{2\overline{2}}|.
 \end{equation}

In (\ref{4.5}) and (\ref{4.6}), $f(r)$ is a suitable positive function depending only on the metric $g$ of $M$.
(\ref{4.5}) and (\ref{4.6}) yield
\begin{equation}\label{4.7}
(\overline{\Delta} r - \Delta r+(\overline{r}_{2\overline{2}} - r_{2\overline{2}}))' \geq -2f(r)(| \overline{\Delta} r - \Delta r| +
 |\overline{r}_{2\overline{2}} - r_{2\overline{2}}|).
 \end{equation}

We divide the proof of theorem \ref{thm3} into two cases. Namely, $k = -1$ and $k = 1$. Note that the case $k = 0$ is included in the real Laplacian comparison theorem.

First consider the case $k = -1$. It suffices to prove the claim below:
\begin{claim}\label{cl10}
{$\overline{\Delta} r - \Delta r$ and $\overline{r}_{2\overline{2}} - r_{2\overline{2}}$ are always nonnegative.}
\end{claim}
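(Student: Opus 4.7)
\textbf{Proof proposal for Claim \ref{cl10}.}

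The plan is to propagate nonnegativity of $u := \overline{\Delta} r - \Delta r$ and $v := \overline{r}_{2\overline{2}} - r_{2\overline{2}}$ from $r = 0$ using the coupled inequalities (\ref{4.5})--(\ref{4.7}). By the unitary-invariance assumption, both $u$ and $v$ are radial, so the problem reduces to an ODE comparison in $r$. First I would pin down the initial data: the leading singular parts of $\Delta r$ and $r_{2\overline{2}}$ on $M$ agree with those of $\overline{\Delta} r$ and $\overline{r}_{2\overline{2}}$ on $M_k$ (they depend only on the real dimension), so $u(r), v(r) \to 0$ as $r \to 0^+$. A Taylor expansion of (\ref{4.1})--(\ref{4.4}) driven by the nonnegative source $R_{1\overline{1}}(p) - (m+1)k$ then shows $u, v \geq 0$ on an initial interval $(0, r_1)$.

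Now argue by contradiction: let $r_*$ be the infimum of points where one of $u, v$ becomes negative, and suppose $r_* < \infty$. On $[0, r_*]$ both are nonnegative, so (\ref{4.7}) simplifies to $(u+v)' \geq -2f(r)(u+v)$ and Gr\"onwall gives $u + v \geq 0$ through $r_*$. If $u(r_*) = v(r_*) = 0$ simultaneously, then (\ref{4.5}) and (\ref{4.6}) give $u'(r_*), v'(r_*) \geq \tfrac{1}{2}(R_{1\overline{1}} - (m+1)k) \geq 0$, forbidding either from dipping negative past $r_*$ and contradicting the definition of $r_*$.

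The principal case is asymmetric vanishing --- say $u(r_*) = 0$ while $v(r_*) > 0$ (the other case is symmetric). Here the crude bound (\ref{4.5}) is insufficient since it only gives $u'(r_*) \geq -f(r_*)v(r_*)$, which is not sign-definite. I would resolve this by changing variables to $(p, v)$ with $p := u - (m-1)v = \overline{r}_{1\overline{1}} - r_{1\overline{1}}$. Direct manipulation of (\ref{4.1})--(\ref{4.4}) yields the coupled linear system
\begin{equation*}
v' = 2 r_{2\overline{2}}\, p + 2v\,(\overline{r}_{1\overline{1}} - \overline{r}_{2\overline{2}} - r_{2\overline{2}}),
\end{equation*}
together with an analogous $p'$-equation carrying the Ricci source $\tfrac{1}{2}(R_{1\overline{1}} - (m+1)k) \geq 0$ and off-diagonal $v$-coefficient $-(m-1)(2\overline{r}_{1\overline{1}} - \overline{r}_{2\overline{2}} - r_{2\overline{2}})$. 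Since $r_{2\overline{2}} > 0$ everywhere (by (\ref{4.2}) and the initial blow-up $r_{2\overline{2}} \sim \tfrac{1}{2r}$), the coefficient of $p$ in $v'$ is nonnegative; the sign of the coefficient of $v$ in $p'$ is read from the explicit model values $\overline{r}_{1\overline{1}} = \tfrac{1}{2}\coth r$, $\overline{r}_{2\overline{2}} = \tfrac{1}{2}\coth(r/2)$ in $\mathbb{C}H^m$, together with $r_{2\overline{2}} \leq \overline{r}_{2\overline{2}}$ (which is the inductive hypothesis $v \geq 0$). A vector Gr\"onwall/comparison principle for this cooperative $2\times 2$ linear system with nonnegative forcing then propagates $(p, v) \geq 0$ past $r_*$, and $u = p + (m-1)v \geq 0$ follows.

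The main obstacle is to verify cooperativity of the $(p, v)$-system globally in $r$, in particular to control the sign of $2\overline{r}_{1\overline{1}} - \overline{r}_{2\overline{2}} - r_{2\overline{2}}$; for large $r$ this is not automatic and requires $r_{2\overline{2}}$ to be bounded below. The resolution is a continuity bootstrap in which the very hypothesis $v \geq 0$ being propagated forces $r_{2\overline{2}} \leq \overline{r}_{2\overline{2}}$ and, together with (\ref{4.2}), yields a positive lower bound for $r_{2\overline{2}}$ that closes the cooperativity check on the same interval on which it is being used.
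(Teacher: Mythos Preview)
Your change of variables to $(p,v)$ with $p = \overline r_{1\overline 1} - r_{1\overline 1}$ is essentially the device the paper deploys for the case $k=+1$ (Claims~\ref{cl11}--\ref{cl12}), but for $k=-1$ the cooperativity you need does not hold. The off-diagonal coefficient of $v$ in $p'$ is $-(m-1)(2\overline r_{1\overline 1} - \overline r_{2\overline 2} - r_{2\overline 2})$, and for it to be nonnegative you require $r_{2\overline 2} \geq 2\overline r_{1\overline 1} - \overline r_{2\overline 2}$. In the complex hyperbolic model one has $2\overline r_{1\overline 1} > \overline r_{2\overline 2}$ for every $r>0$ (this is exactly the content of (\ref{4.13}), rewritten), so the right-hand side is a fixed positive function of $r$ that does not tend to zero as $r\to\infty$. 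Nothing forces $r_{2\overline 2}$ above this threshold: on flat $\mathbb{C}^m$ one has $r_{2\overline 2}=\tfrac{1}{2r}\to 0$, cooperativity fails for large $r$, and the vector Gr\"onwall step collapses. Your proposed bootstrap --- $v\geq 0$ together with (\ref{4.2}) --- delivers only $r_{2\overline 2}>0$, not the quantitative lower bound you need.

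The paper handles the case $u(r_0)=0$, $v(r_0)>0$ directly, without changing variables and without any system comparison. Subtracting (\ref{4.1}) from (\ref{4.3}) and using $\Delta r(r_0)=\overline\Delta r(r_0)$ gives, after the simplification (\ref{4.9})--(\ref{4.11}),
\[
u'(r_0) \;\geq\; (\overline r_{2\overline 2} - r_{2\overline 2})\bigl(4\Delta r - (2m-1)(r_{2\overline 2}+\overline r_{2\overline 2})\bigr)\big|_{r=r_0}.
\]
The model inequality $\overline\Delta r > \tfrac{2m-1}{2}\overline r_{2\overline 2}$ together with $r_{2\overline 2}<\overline r_{2\overline 2}$ makes the bracket strictly positive, hence $u'(r_0)>0$, a contradiction. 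Thus the very inequality $2\overline r_{1\overline 1}>\overline r_{2\overline 2}$ that obstructs your cooperativity is precisely what makes the direct argument work for $k=-1$. Your treatment of the simultaneous-vanishing case is also incomplete: $u'(r_*),v'(r_*)\geq 0$ does not by itself preclude a sign change beyond $r_*$; the paper instead first perturbs the metric to $(1+\epsilon)g$ so that $u,v$ are strictly positive for small $r$, and then integrates (\ref{4.7}) from $r_0/2$ to $r_0$ to force $(u+v)(r_0)>0$.
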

\begin{proof}
First we prove the claim under the assumption as follows:
\begin{equation}\label{4.-1}
\Delta r < \overline\Delta r \ \text{and} \ r_{2\overline{2}} < \overline r_{2\overline{2}}\ \text{when $r$ is small}.
\end{equation}
If the claim is not true, there are three possibilities.

1.  When $r$ is increasing, $\overline{\Delta} r - \Delta r$ is becoming negative before  $\overline{r}_{2\overline{2}} - r_{2\overline{2}}$ does.

2. $\overline r_{2\overline{2}} - r_{2\overline{2}}$  is becoming negative before $\overline{\Delta} r - \Delta r$ does.

3. There exists a constant $r_0 > 0$ such that $\overline r_{2\overline{2}}|_{r=r_0} = r_{2\overline{2}}|_{r=r_0}$,  $\overline{\Delta} r|_{r=r_0} = \Delta r|_{r=r_0}$. $\Delta r < \overline\Delta r$ and $r_{2\overline{2}} < \overline r_{2\overline{2}}$ for $r < r_0$.

\bigskip

For case 1, let $r = r_0>0$ be the first radius such that $\overline{\Delta} r - \Delta r$ is becoming negative while $\overline{r}_{2\overline{2}}(r_0) - r_{2\overline{2}}(r_0) > 0$.

 We are going to prove
\begin{equation}\label{4.8}
 (\overline{\Delta} r - \Delta r)'|_{r=r_0} > 0.
  \end{equation}

(\ref{4.1}) and (\ref{4.3}) give
\begin{equation}\label{4.9}
\begin{aligned}
(\overline{\Delta} r - \Delta r)' &\geq 2(\Delta r - (m-1)r_{2\overline{2}})^2 + (m-1)r_{2\overline{2}}^2
\\&-(2(\overline\Delta r - (m-1)\overline{r}_{2\overline{2}})^2 + (m-1)\overline{r}_{2\overline{2}}^2).
\end{aligned}
\end{equation}

To prove (\ref{4.8}), it suffices to prove
\begin{equation}\label{4.10}
2(\Delta r - (m-1)r_{2\overline{2}})^2 + (m-1)r_{2\overline{2}}^2 - 2(\overline\Delta r - (m-1)\overline{r}_{2\overline{2}})^2 - (m-1)\overline{r}_{2\overline{2}}^2 > 0.
\end{equation}

Since $(\Delta r - \overline\Delta r)|_{r=r_0} = 0$, after a simplification, (\ref{4.10}) is equivalent to
\begin{equation}\label{4.11}
(\overline{r}_{2\overline{2}} - r_{2\overline{2}})(4\Delta r -(2m-1)(r_{2\overline{2}}+\overline{r}_{2\overline{2}})) >0.
\end{equation}

According to the assumption in case 1, we have
\begin{equation}\label{4.12}
(\overline{r}_{2\overline{2}} - r_{2\overline{2}})|_{r=r_0} > 0.
\end{equation}

Using $k = -1$ and the assumption in case 1, we find
\begin{equation}\label{4.13}
\Delta r|_{r=r_0}=\overline\Delta r|_{r=r_0} > \frac{2m-1}{2}\overline r_{\overline{2}2}|_{r=r_0}.
\end{equation}

Therefore
\begin{equation}\label{4.14}
(4\Delta r -(2m-1)(r_{2\overline{2}}+\overline{r}_{2\overline{2}}))|_{r=r_0} > 0.
\end{equation}

Putting (\ref{4.12}) and (\ref{4.14}) together, we obtain the proof of (\ref{4.11}) and (\ref{4.8}). However, (\ref{4.8}) contradicts the assumption that
$\overline{\Delta} r - \Delta r$ is becoming negative when $r= r_0$.
Therefore case 1 can not happen.

\bigskip

Now consider case 2. Let $r = r_0>0$ be the first radius such that $\overline{r}_{2\overline{2}}-r_{2\overline{2}}$ is becoming negative while $\overline{\Delta} r(r_0) - \Delta r(r_0) > 0$.

Using (\ref{4.2}), (\ref{4.4}), $r_{2\overline{2}}|_{r=r_0} =\overline r_{2\overline{2}}|_{r=r_0}$ and $\overline r_{2\overline 2} > 0$, we find
\begin{equation}\label{4.15}
(\overline{r}_{2\overline{2}} - r_{2\overline{2}})'|_{r=r_0}= (2\overline r_{2\overline 2}(\overline\Delta r - \Delta r))|_{r=r_0} >0
\end{equation}

(\ref{4.15}) contradicts the assumption that $\overline{r}_{2\overline{2}}-r_{2\overline{2}}$ is becoming negative
at $r = r_0$. Therefore case 2 can not happen.

\bigskip

Consider case 3 now.
Using the assumption that $\Delta r < \overline\Delta r$ and $r_{2\overline{2}} < \overline r_{2\overline{2}}$ for $r < r_0$, we integrate (\ref{4.7}) from $\frac{r_0}{2}$ to $r_0$. It follows
\begin{equation}\label{4.16}
(\overline{\Delta} r - \Delta r+ \overline{r}_{2\overline{2}} - r_{2\overline{2}})|_{r=r_0} >0.
 \end{equation}

This contradicts the assumption of case 3.

\bigskip

So far we have proved claim \ref{cl10} under the condition (\ref{4.-1}). For general case, let
$\tilde g = (1+\epsilon)g$ where $\epsilon$ is a small positive constant. It is simple to check that $\tilde g$ satisfies $Ric (\tilde g) \geq -(m+1)\tilde g$. After a simple computation of the asymptotic expansion of $\Delta_{\tilde g}r$ and $(r_{\tilde g})_{2\overline 2}$ for small $r$, $\tilde g$ satisfies (\ref{4.-1}).
The proof of claim \ref{cl10} is complete if we let $\epsilon$ approach 0.

\end{proof}
The proof of the case $k = -1$ is complete.

\bigskip

Now we turn to the case $k = 1$.

\begin{claim}\label{cl11}
{For any $r_0>0$, if
\begin{equation}\label{4.17}
\overline{r}_{2\overline{2}} - r_{2\overline{2}} \geq 0, \overline\Delta r - \Delta r - (m-1)(\overline{r}_{2\overline{2}} - r_{2\overline{2}})\geq 0
\end{equation}
for any $r \in (0, r_0)$, there exists a function $g(r)$ such that
\begin{equation}\label{4.18}
(\overline\Delta r - \Delta r - (m-1)(\overline{r}_{2\overline{2}} - r_{2\overline{2}}))' \geq g(r)(\overline\Delta r - \Delta r - (m-1)(\overline{r}_{2\overline{2}} - r_{2\overline{2}}))
\end{equation} for $r \in (0, r_0)$.}
\end{claim}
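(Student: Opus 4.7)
The plan is to collapse the combination in the claim into a single quantity by exploiting the identity $\Delta r = r_{1\overline{1}} + (m-1) r_{2\overline{2}}$ (and its analogue in $M_k$), which yields
\[
\overline{\Delta} r - \Delta r - (m-1)(\overline{r}_{2\overline{2}} - r_{2\overline{2}}) = \overline{r}_{1\overline{1}} - r_{1\overline{1}} =: W.
\]
Setting $B := \overline{r}_{2\overline{2}} - r_{2\overline{2}}$, the hypothesis (\ref{4.17}) becomes simply $W \geq 0$ and $B \geq 0$ on $(0, r_0)$. The target inequality will take the form $W' \geq g(r) W$, with $g$ a function of $r$ alone because unitary invariance makes each of $r_{1\overline{1}}$, $r_{2\overline{2}}$, $\overline{r}_{1\overline{1}}$, $\overline{r}_{2\overline{2}}$ a function only of $r$.

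Next I would derive the master differential inequality for $W'$ by subtracting the $M$-versions of the Bochner relations from the $M_k$-versions. Combining (\ref{4.1}) (after invoking $R_{1\overline{1}} \geq (m+1)k$) with the equality (\ref{4.3}), and factoring $\overline{r}_{2\overline{2}}^2 - r_{2\overline{2}}^2 = B(\overline{r}_{2\overline{2}} + r_{2\overline{2}})$ and $\overline{r}_{1\overline{1}}^2 - r_{1\overline{1}}^2 = W(\overline{r}_{1\overline{1}} + r_{1\overline{1}})$, one obtains
\[
(\overline{\Delta} r - \Delta r)' \geq -2 W (\overline{r}_{1\overline{1}} + r_{1\overline{1}}) - (m-1) B (\overline{r}_{2\overline{2}} + r_{2\overline{2}}).
\]
Meanwhile, subtracting (\ref{4.2}) from (\ref{4.4}) and grouping in the same way yields the exact identity
\[
B' = 2 r_{2\overline{2}} W + 2 B (\overline{r}_{1\overline{1}} - \overline{r}_{2\overline{2}} - r_{2\overline{2}}).
\]
Substituting into $W' = (\overline{\Delta} r - \Delta r)' - (m-1) B'$ and collecting $W$- and $B$-terms separately, everything reorganizes into the clean expression
\[
W' \geq -2 W \bigl[\overline{r}_{1\overline{1}} + r_{1\overline{1}} + (m-1) r_{2\overline{2}} \bigr] + (m-1) B \bigl[\overline{r}_{2\overline{2}} + r_{2\overline{2}} - 2 \overline{r}_{1\overline{1}} \bigr].
\]

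The remaining step is to verify that the coefficient of $B$ is nonnegative; once it is, the hypothesis $B \geq 0$ lets us drop that term and read off $g(r) = -2 [\overline{r}_{1\overline{1}} + r_{1\overline{1}} + (m-1) r_{2\overline{2}}]$. For the $M_k$ contribution, I would use the explicit complex-space-form Hessians $\overline{r}_{1\overline{1}} = \tfrac{\sqrt{k}}{2}\cot(\sqrt{k} r)$ and $\overline{r}_{2\overline{2}} = \tfrac{\sqrt{k}}{2}\cot(\tfrac{\sqrt{k} r}{2})$; the double-angle identity $\cot(2\theta) = \tfrac{1}{2}(\cot\theta - \tan\theta)$ then gives the fortunate cancellation $\overline{r}_{2\overline{2}} - 2 \overline{r}_{1\overline{1}} = \tfrac{\sqrt{k}}{2}\tan(\tfrac{\sqrt{k} r}{2}) \geq 0$ on the admissible range. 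For the $M$ part, I would show $r_{2\overline{2}} > 0$ throughout $(0, r_0)$ by using the Euclidean asymptotic $r_{2\overline{2}} \sim 1/r$ as $r \to 0^+$ together with (\ref{4.2}): if $r_{2\overline{2}}$ first hit $0$, ODE uniqueness applied to $r_{2\overline{2}}' = 2 r_{2\overline{2}}(r_{1\overline{1}} - r_{2\overline{2}})$ would force $r_{2\overline{2}} \equiv 0$, contradicting the blow-up at the origin.

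The main obstacle is this final sign computation. Its success rests on the precise $4{:}1$ ratio between the holomorphic sectional curvature (along $J \nabla r$) and the transverse sectional curvatures in $\mathbb{CP}^m_k$, which is exactly what converts the double-angle identity into the nonnegative $\tfrac{\sqrt{k}}{2}\tan(\tfrac{\sqrt{k} r}{2})$ rather than an expression of indeterminate sign; without this structural feature the argument would break down. Everything else is algebraic bookkeeping of the system (\ref{4.1})--(\ref{4.4}) combined with the observation that unitary invariance makes all the radial quantities honest functions of $r$.
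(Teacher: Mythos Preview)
Your proposal is correct and follows essentially the same approach as the paper: both subtract (4.1)--(4.4) from their $M_k$ counterparts and reduce the sign question to $\overline r_{2\overline 2} > 2\overline r_{1\overline 1}$ (this is exactly the paper's (4.24)) together with $r_{2\overline 2} > 0$ (the paper's (4.26)). Your substitution $W = \overline r_{1\overline 1} - r_{1\overline 1}$ merely streamlines the bookkeeping, making the $B$-coefficient $\overline r_{2\overline 2} + r_{2\overline 2} - 2\overline r_{1\overline 1}$ appear directly, whereas the paper keeps $\overline\Delta r - \Delta r$ as a variable and must search for a $g$ satisfying the pair (4.20)--(4.21) and then verify (4.23) via (4.25).
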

\begin{proof}

By (\ref{4.1}), (\ref{4.2}), (\ref{4.3}) and (\ref{4.4}), we have
 \begin{equation}\label{4.19}
\begin{aligned}
&(\overline\Delta r - \Delta r - (m-1)(\overline{r}_{2\overline{2}} - r_{2\overline{2}}))' \\& =
\frac{1}{2}(R_{1\overline 1}- (m+1))+ (m-1)r_{2\overline 2}^2 + 2(\Delta r - (m-1)r_{2\overline 2})^2
- (m-1)\overline r_{2\overline 2}^2 \\&- 2(\overline\Delta r - (m-1)\overline r_{2\overline 2})^2
+(m-1)(2r_{2\overline 2}\Delta r - 2mr_{2\overline 2}^2 - 2\overline r_{2\overline 2}\overline\Delta r + 2m \overline r_{2\overline 2}^2) \\& \geq (m-1+2(m-1)^2-2m(m-1))r_{2\overline 2}^2 + 2(\Delta r)^2 + (-4(m-1) + 2(m-1))r_{2\overline 2}\Delta r \\&-((m-1+2(m-1)^2-2m(m-1))\overline r_{2\overline 2}^2 + 2(\overline\Delta r)^2 + (-4(m-1) + 2(m-1))\overline r_{2\overline 2}\overline\Delta r) \\& = -(m-1)r_{2\overline 2}^2 + 2(\Delta r)^2 -2(m-1)r_{2\overline 2}\Delta r - (-(m-1)\overline r_{2\overline 2}^2 + 2(\overline\Delta r)^2 -2(m-1)\overline r_{2\overline 2}\overline\Delta r) \\& =(m-1)(\overline r_{2\overline 2} - r_{2\overline 2})(\overline r_{2\overline 2} + r_{2\overline 2}) - 2(\overline\Delta r - \Delta r)(\overline\Delta r + \Delta r) \\&+ 2(m-1)\overline\Delta r(\overline r_{2\overline 2} - r_{2\overline 2}) + 2(m-1)r_{2\overline 2}(\overline\Delta r - \Delta r) \\& =
(\overline\Delta r - \Delta r)(2(m-1)r_{2\overline 2} - 2(\overline\Delta r + \Delta r)) + (m-1)(\overline r_{2\overline 2} - r_{2\overline 2})(\overline r_{2\overline 2} + r_{2\overline 2}+2\overline\Delta r).
\end{aligned}
\end{equation}

Note that $\overline\Delta r - \Delta r \geq 0$ when $r < r_0$. To prove claim \ref{cl11}, by (\ref{4.18}) and (\ref{4.19}), it suffices to find a function $g(r)$ satisfying the two inequalities below:
\begin{equation}\label{4.20}
2(m-1)r_{2\overline{2}} - 2(\overline\Delta r + \Delta r)\geq g,
\end{equation}
\begin{equation}\label{4.21}
(m-1)(\overline r_{2\overline 2} + r_{2\overline 2}+2\overline\Delta r) \geq -(m-1)g.
\end{equation}

 We take
\begin{equation}\label{4.22}
g = - 2\overline\Delta r -(\overline{r}_{2\overline{2}} + r_{2\overline{2}}),
\end{equation}
then $g$ satisfies (\ref{4.21}).
Plugging $g$ in (\ref{4.20}), after a slight simplification, it suffices to prove
\begin{equation}\label{4.23}
2(m-1)r_{2\overline{2}} + \overline{r}_{2\overline{2}} + r_{2\overline{2}} \geq 2\Delta r.
\end{equation}

Since $k = 1$, a simple computation gives
\begin{equation}\label{4.24}
\overline\Delta{r} < \frac{2m-1}{2}\overline{r}_{2\overline{2}}.
\end{equation}

Putting (\ref{4.17}) and (\ref{4.24}) together, we get
\begin{equation}\label{4.25}
\Delta r - (m-1)r_{2\overline{2}} \leq \overline\Delta r-(m-1)\overline{r}_{2\overline{2}} \leq \frac{\overline{r}_{2\overline{2}}}{2}.
\end{equation}

  An observation of (\ref{4.2}) gives
 \begin{equation}\label{4.26}
  r_{2\overline{2}} > 0.
  \end{equation}

 (\ref{4.25}) and (\ref{4.26}) imply (\ref{4.23}).
 The proof of claim \ref{cl11} is complete.
\end{proof}

Since $\Delta r = (m-1)r_{2\overline 2} + (\Delta r - (m-1)r_{2\overline 2})$, to prove theorem \ref{thm3} for the case $k = 1$, it suffices to prove the claim as follows:
\begin{claim}\label{cl12}
{ $\overline r_{2\overline{2}} - r_{2\overline{2}} \geq 0$, $\overline\Delta r - \Delta r - (m-1)(\overline r_{2\overline{2}} - r_{2\overline{2}}) \geq 0$ for all $r$.}
\end{claim}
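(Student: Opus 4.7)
The plan is to run a continuity argument parallel to the one in Claim \ref{cl10}, but now using the differential inequality from Claim \ref{cl11} in place of direct bounds on the individual derivatives. For brevity write $A(r) = \overline{r}_{2\overline{2}} - r_{2\overline{2}}$ and $B(r) = \overline{\Delta} r - \Delta r - (m-1) A(r)$; the task is to prove $A \geq 0$ and $B \geq 0$ for all $r > 0$.

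As in the last step of Claim \ref{cl10}, I would first reduce to the situation where $A(r)$ and $B(r)$ are \emph{strictly} positive for all sufficiently small $r > 0$. This is arranged by rescaling $\tilde g = (1-\epsilon) g$ with $\epsilon > 0$ small, which preserves unitary invariance and upgrades the Ricci bound to $R^{\tilde g}_{1\overline{1}} \geq (m+1)/(1-\epsilon) > m+1$; a short asymptotic expansion (matching (\ref{4.1}) and (\ref{4.3}) to order $r$ at $p$) then shows $A,B>0$ for small $r$ in the perturbed metric. The original statement follows by letting $\epsilon\to 0$.

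Suppose for contradiction that $r_0 > 0$ is the infimum of radii at which $A$ or $B$ becomes negative. By continuity $A(r_0), B(r_0) \geq 0$, with at least one equal to zero, while $A > 0$ and $B > 0$ on $(0, r_0)$. I split into two cases.

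\textbf{Case 1: $B(r_0) = 0$.} On $(0, r_0)$ the hypothesis (\ref{4.17}) of Claim \ref{cl11} is satisfied, so the differential inequality (\ref{4.18}) holds with $g(r) = -2\overline{\Delta} r - (\overline{r}_{2\overline{2}}+r_{2\overline{2}})$. Fix any $r_* \in (0, r_0)$; since $g$ is bounded on $[r_*, r_0]$, Gronwall gives
\begin{equation*}
B(r_0) \geq B(r_*)\exp\Bigl(\int_{r_*}^{r_0} g(s)\,ds\Bigr) > 0,
\end{equation*}
contradicting $B(r_0) = 0$. \textbf{Case 2: $A(r_0) = 0$ but $B(r_0) > 0$.} Then $r_{2\overline{2}}(r_0) = \overline{r}_{2\overline{2}}(r_0)$ and $\overline{\Delta} r - \Delta r > 0$ at $r_0$ (since $B(r_0) > 0$ and $A(r_0)=0$). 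Subtracting (\ref{4.2}) from (\ref{4.4}) at $r_0$ yields
\begin{equation*}
A'(r_0) = 2\,\overline{r}_{2\overline{2}}(r_0)\,(\overline{\Delta} r - \Delta r)(r_0) > 0,
\end{equation*}
using that $\overline{r}_{2\overline{2}} > 0$ throughout (a consequence of (\ref{4.4}) and the positive asymptotics at $0$). But $A \geq 0$ on $(0, r_0)$ with $A(r_0) = 0$ forces $A'(r_0) \leq 0$, contradiction.

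The main obstacle I anticipate is the reduction to the strictly positive initial regime, because both $A$ and $B$ tend to zero as $r \to 0^+$ (the two metrics agree to leading order $1/r$), so the sign at small $r$ is controlled only by the subleading coefficient. The conformal perturbation $\tilde g = (1-\epsilon)g$ circumvents this by opening a strict gap in the Ricci lower bound and hence a strict gap at order $r$ in the asymptotic expansions of $\Delta\tilde r$ versus $\overline{\Delta} r$ and of $\tilde r_{2\overline{2}}$ versus $\overline{r}_{2\overline{2}}$; the Gronwall step in Case 1 then provides the propagation mechanism that drives the argument to arbitrary $r$.
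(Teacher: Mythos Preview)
Your proposal is correct and follows essentially the same approach as the paper: a first-crossing continuity argument using Claim \ref{cl11} (via Gronwall) for the case $B(r_0)=0$, the derivative computation from (\ref{4.2})--(\ref{4.4}) for the case $A(r_0)=0$, $B(r_0)>0$, and the metric rescaling to obtain strict positivity for small $r$. Your two-case split is slightly cleaner than the paper's three cases, since your Gronwall step absorbs the paper's Case 3 (both $A$ and $B$ vanish at $r_0$) into Case 1, and your choice of $\tilde g=(1-\epsilon)g$ is the correct direction for $k=1$.
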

\begin{proof}
We first prove claim \ref{cl12} under the assumption that $\overline r_{2\overline{2}} - r_{2\overline{2}}$ and $\overline\Delta r - \Delta r - (m-1)(\overline r_{2\overline{2}} - r_{2\overline{2}})$ are positive for small $r$.
If claim \ref{cl12} does not hold, there are three possibilities.

1.  When $r$ is increasing, $\overline\Delta r - \Delta r - (m-1)(\overline r_{2\overline{2}} - r_{2\overline{2}})$ is becoming negative before  $\overline{r}_{2\overline{2}} - r_{2\overline{2}}$ does.

2. $\overline r_{2\overline{2}} - r_{2\overline{2}}$ is becoming negative before $\overline\Delta r - \Delta r - (m-1)(\overline r_{2\overline{2}} - r_{2\overline{2}})$ does.

3. There exists a constant $r_0 > 0$ such that $\overline r_{2\overline{2}}|_{r=r_0} = r_{2\overline{2}}|_{r=r_0}$,  $\overline\Delta r - \Delta r - (m-1)(\overline r_{2\overline{2}} - r_{2\overline{2}})|_{r=r_0} =0$. $\overline\Delta r - \Delta r - (m-1)(\overline r_{2\overline{2}} - r_{2\overline{2}}) > 0$ and $\overline r_{2\overline{2}} - r_{2\overline{2}} > 0$ for $r < r_0$.

\bigskip

For case 1, we apply claim \ref{cl11}. Let $r = r_0>0$ be the first radius such that $\overline\Delta r - \Delta r - (m-1)(\overline r_{2\overline{2}} - r_{2\overline{2}})$ is becoming negative while $\overline{r}_{2\overline{2}}(r_0) - r_{2\overline{2}}(r_0) > 0$. Integrating (\ref{4.18}) from $\frac{r_0}{2}$ to $r_0$, we find
$\overline\Delta r - \Delta r - (m-1)(\overline r_{2\overline{2}} - r_{2\overline{2}})|_{r=r_0} > 0$.
This contradicts the assumption of case 1.

\bigskip

For case 2 and case 3, we can get the same contradiction as in the proof of claim \ref{cl10}.

\bigskip

Thus we have completed the proof of claim \ref{cl12} under the condition that $\overline r_{2\overline{2}} - r_{2\overline{2}}$ and $\overline\Delta r - \Delta r - (m-1)(\overline r_{2\overline{2}} - r_{2\overline{2}})$ are positive for small $r$.
To remove the condition, we can use the same strategy as in the proof of claim \ref{cl10}.
 \end{proof}

This proves the case when $k = 1$.
The proof of theorem \ref{thm3} is complete.

\qed

\bigskip

Using $\dashint$ to denote the average on the geodesic sphere $\partial B_p(r)$, we are going to prove the following theorem:
\begin{theorem}\label{thm5}
{ Let $M^m$ be a complete K\"ahler manifold such that $Ric \geq -(m+1)$. Consider a point $p \in M$, define $r$ to be the distance function to $p$ on $M$. Near a point $q \in M$, choose a unitary frame $\{e_{\alpha}\}\in T^{1,0}(M)$ such that $e_1 = \frac{1}{\sqrt{2}}(\nabla r - \sqrt{-1}J\nabla r)$. Let $\overline M^m$ be the simply connected complex space form with constant bisectional curvature $-1$. Use $\overline r_{\alpha\overline{\beta}}$ and $\overline\Delta r$ to denote the complex hessian and Laplacian of the distance function on $\overline M$. If either $\Delta r$ or $\sum\limits_{\alpha \neq 1}r_{\alpha \overline{\alpha}}$ is a function of $r$, then inside the injective radius of $p$, the following average Laplacian comparison holds,
\begin{equation}\label{4.27}
\dashint \Delta r \leq \overline{\Delta}r(r).
\end{equation}}
\end{theorem}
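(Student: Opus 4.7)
The plan is to follow the two-quantity comparison strategy of Theorem~\ref{thm3}, with every quantity averaged over $\partial B_p(r)$ and the pointwise equalities of the unitary-invariant case replaced by integrated inequalities extracted from (\ref{1.1}). Write $h = \sum_{\alpha\neq 1}r_{\alpha\overline\alpha}$, $\bar h = \dashint h$, $\bar L = \dashint \Delta r$, and let $\overline h = (m-1)\overline r_{2\overline 2}$, $\overline L = \overline{\Delta} r$ denote the analogous (radial) functions on $\overline M$.

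First I would apply (\ref{1.1}) to $f=r$ and integrate on $\partial B_p(r)$; the trick $\operatorname{div}|_M Y = \operatorname{div}|_{\partial B_p(r)} Y$ used in Theorem~\ref{thm2} kills the divergence term and yields (\ref{3.53}). Two pointwise lower bounds are then the inputs I need. Cauchy--Schwarz gives $|r_{\alpha\overline\beta}|^2 \geq r_{1\overline 1}^2 + \tfrac{h^2}{m-1}$, and the distance-function identity $\nabla^2 r(\nabla r,\cdot)=0$ forces $r_{11} = -r_{1\overline 1}$, whence $|r_{\alpha\beta}|^2 \geq r_{1\overline 1}^2$. Plugging these into (\ref{3.53}) and into the ordinary Bochner identity (used with $Ric \geq -(m+1)$), then exploiting the radiality hypothesis (in the $h$-radial case, $h$ is constant on spheres, so the $\operatorname{Var}(\Delta r)$ appearing in $\bar L' = \dashint\partial_r\Delta r + 2\operatorname{Var}(\Delta r)$ exactly cancels the variance produced by $\dashint(\Delta r-h)^2 \geq (\bar L - h)^2$; in the $\Delta r$-radial case one instead uses $\dashint h^2 \geq \bar h^2$ and $\dashint r_{1\overline 1}^2 \geq (L-\bar h)^2$), both hypotheses collapse to the same coupled system
\begin{align*}
\bar L' &\leq -2(\bar L - \bar h)^2 - \tfrac{\bar h^2}{m-1} + \tfrac{m+1}{2}, \\
\bar h' &\leq 2\bar h\Bigl(\bar L - \tfrac{m\bar h}{m-1}\Bigr),
\end{align*}
which is precisely the system satisfied \emph{with equality} by $(\overline L,\overline h)$ on $\overline M$.

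The comparison $\bar L\leq\overline L$, $\bar h\leq\overline h$ then follows by the three-case maximum-principle argument of Claim~\ref{cl10}, applied to $A=\overline L-\bar L$ and $B=\overline h-\bar h$. The pivotal case is $A(r_0)=0$, $B(r_0)>0$, where the differential inequalities reduce to $A'(r_0)\geq B(r_0)\bigl[4\overline r_{1\overline 1}-2\overline r_{2\overline 2}\bigr] = B(r_0)\bigl[2\coth r_0-\coth(r_0/2)\bigr]>0$ (the function $r\mapsto 2\coth r-\coth(r/2)$ vanishes at $0$ and has derivative $\tfrac12\operatorname{sech}^2(r/2)>0$). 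The case $B(r_0)=0$, $A(r_0)>0$ gives $B'(r_0)\geq 2\bar h(r_0) A(r_0)>0$, and simultaneous vanishing is excluded by a Gronwall estimate on $A+B$, as in (\ref{4.7}). Initial positivity of $A, B$ is arranged by the perturbation $g \mapsto (1+\epsilon)g$ used in Claim~\ref{cl10}, with $\epsilon\to 0$ at the end. The hard part is squeezing a sharp comparison with the equality-system in $\overline M$ out of mere inequality-information in $M$; the Kähler-specific positivity $2\coth r - \coth(r/2)>0$, which is encoded in the Hessian of $r$ on $\overline M$, is precisely what forces the correct sign in the pivotal case and is the reason that (\ref{1.1}), as opposed to the usual real Bochner formula, does the work.
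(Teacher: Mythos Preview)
Your proposal is correct and follows essentially the same route as the paper. You set up the same pair of averaged quantities $u=\dashint\Delta r$, $v=\dashint\sum_{\alpha\neq1}r_{\alpha\overline\alpha}$, derive the same coupled differential inequalities (\ref{4.-2})--(\ref{4.-3}) from the Bochner formula and (\ref{1.1}) under each radiality hypothesis, and then close with the three-case maximum-principle argument of Claim~\ref{cl10}; the paper does exactly this and then simply writes ``the proof of claim~\ref{cl13} is almost the same as the proof of claim~\ref{cl10}, so we skip the proof here.'' Your explicit pivotal-case computation $4\overline r_{1\overline1}-2\overline r_{2\overline2}=\tanh(r_0/2)>0$ is in fact the averaged incarnation of the inequality (\ref{4.13}), so even the details you supply beyond the paper's sketch line up with its intended argument.
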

\begin{proof}

To prove theorem \ref{thm5}, it suffices to prove the following claim:
\begin{claim}\label{cl13}
{\begin{equation}\label{4.33}
\dashint \Delta r \leq \overline\Delta r, \dashint \sum\limits_{\alpha \neq 1}r_{\alpha\overline\alpha} \leq
\sum\limits_{\alpha\neq 1}\overline r_{\alpha\overline\alpha}.
\end{equation}}
\end{claim}
\begin{proof}

Let $u(r) = \dashint\Delta r, v(r) = \dashint \sum\limits_{\alpha \neq 1}r_{\alpha\overline\alpha}$.
Integrating the Bochner formula (\ref{2.1}) on the geodesic sphere, we find
 $$\int_{\partial B_p(r)} \frac{\partial \Delta r}{\partial r}  \leq \int_{\partial B(P,r)}-\frac{1}{2}Ric_{1\overline{1}}-2(\Delta r - \sum\limits_{\alpha \neq 1}r_{\alpha\overline{\alpha}})^2-\sum\limits_{\alpha \neq 1}
 r^2_{\alpha\overline{\alpha}}.$$
 Taking the average on the geodesic sphere $\partial B_p(r)$, we get
 \begin{equation}\label{4.28}
 \frac{\partial \dashint \Delta r}{\partial r} \leq -\frac{1}{2}\dashint Ric_{1\overline{1}} + 2\dashint (\Delta r)^2 - 2(\dashint \Delta r)^2 -
2\dashint (\Delta r - \sum\limits_{\alpha\neq1}r_{\alpha\overline{\alpha}})^2 - \sum\limits_{\alpha\neq1}\dashint |r_{\alpha\overline{\alpha}}|^2.
\end{equation}

If $\Delta r$ is a function of $r$, then (\ref{4.28}) becomes
  \begin{equation}\label{4.29}
  \begin{aligned}
 \frac{\partial \dashint \Delta r}{\partial r} &\leq -\frac{1}{2}\dashint Ric_{1\overline{1}} -
2\dashint (\Delta r - \sum\limits_{\alpha\neq1}r_{\alpha\overline{\alpha}})^2 - \sum\limits_{\alpha\neq1}\dashint |r_{\alpha\overline{\alpha}}|^2.
\end{aligned}
\end{equation}
After a slight simplification, we obtain
\begin{equation}\label{4.-2}
  u' \leq -\frac{1}{2}(\dashint R_{1\overline 1}) - 2u^2 + 4uv - \frac{2m-1}{m-1}v^2.
\end{equation}

(\ref{3.54}) becomes
\begin{equation}\label{4.30}
\begin{aligned}
 \frac{\partial \dashint \sum\limits_{\alpha \neq 1} r_{\alpha\overline{\alpha}}}{\partial r} = -2\dashint \sum\limits_{\alpha,\beta}|r_{\alpha\overline{\beta}}|^2+2\dashint r_{1\overline{1}}\Delta r.
 \end{aligned}
  \end{equation}
  Further simplification gives
 \begin{equation}\label{4.-3}
\begin{aligned}
v' \leq 2uv - \frac{2m}{m-1}v^2.
\end{aligned}
  \end{equation}

\bigskip
  If $\sum\limits_{\alpha \neq 1}r_{\alpha \overline{\alpha}}$ is a function of $r$, then (\ref{4.28}) becomes
 \begin{equation}\label{4.31}
 \begin{aligned}
 \frac{\partial \dashint \Delta r}{\partial r} \leq -\frac{1}{2}\dashint Ric_{1\overline{1}} - 2(\dashint \Delta r)^2-2\dashint |\sum\limits_{\alpha\neq1}r_{\alpha\overline{\alpha}}|^2 - \sum\limits_{\alpha\neq1}\dashint |r_{\alpha\overline{\alpha}}|^2 +4\dashint (\Delta r\sum\limits_{\alpha\neq1}r_{\alpha\overline{\alpha}})
 \end{aligned}
 \end{equation}
 where we expanded the term $2\dashint (\Delta r - \sum\limits_{\alpha\neq1}r_{\alpha\overline{\alpha}})^2$ in (\ref{4.28}).
 (\ref{4.31}) is equivalent to (\ref{4.-2}).

 For (\ref{3.54}), we write it as
\begin{equation}\label{4.32}
\begin{aligned}
\frac{\partial \dashint \sum\limits_{\alpha \neq 1} r_{\alpha\overline{\alpha}}}{\partial r} = -2\dashint \sum\limits_{\alpha\neq 1,\beta \neq1}|r_{\alpha\overline{\beta}}|^2+ 2\dashint (\Delta r -\sum\limits_{\alpha\neq1}r_{\alpha\overline{\alpha}})\sum\limits_{\alpha\neq1}r_{\alpha\overline{\alpha}}
\end{aligned}
\end{equation}
which could be written as (\ref{4.-3}).

Combining (\ref{4.-2}), (\ref{4.-3}), the proof of claim \ref{cl13} is almost the same as the proof of claim \ref{cl10}, so we skip the proof here.
\end{proof}
We complete the proof of theorem \ref{thm5}.
\end{proof}
\begin{remark} {Note that in the proof of theorem \ref{thm5}, one just needs to assume $\dashint Ric_{1\overline{1}}$ to be bounded from below by a negative constant.}
\end{remark}

\section{\bf{ An example}}

 In this section, we give a simple example showing that when the Ricci curvature is bounded from below by a positive constant, the diameter of the K\"ahler manifold could exceed the diameter of the complex space forms. This implies
 that in general situation, the sharp version of theorem \ref{thm1} is not true comparing with the complex space forms.

Let $N^m = \mathbb{C}\mathbb{P}^1\times\cdot\cdot\cdot\times\mathbb{C}\mathbb{P}^1$ to be the K\"ahler manifold equipped with the product metric, each $\mathbb{C}\mathbb{P}^1$ has the Fubini-Study metric. We can rescale $N^m$ so that $Ric = g$. It is simple to see $$diam(N^m)=\sqrt{m}\pi.$$

After a rescaling, $\mathbb{C}\mathbb{P}^m$ inherits a K\"ahler-Einsten metric with $Ric = g$. Given a unit vector $X \in T(\mathbb{C}\mathbb{P}^m)$, one can see that $$R_{XJXJXX} = \frac{2}{m+1},$$ therefore  $$diam(\mathbb{C}\mathbb{P}^m)=\frac{\pi}{\sqrt{\frac{2}{m+1}}}.$$

If $m > 1$, one sees that
$$diam(N^m) > diam(\mathbb{C}\mathbb{P}^m).$$

One can compare this example with the result of Li and Wang in \cite{[LW]}. Their theorem says that for a complete K\"ahler manifold, if the bisectional curvature is bounded from below by a positive constant, then $\mathbb{C}\mathbb{P}^m$ has the maximal diameter. We also compare the example with the result in \cite{[L]} by the author.
\begin{theorem}\label{thm7}
{Let $M^m$ be a K\"{a}hler manifold with real analytic metric. Suppose $Ric \geq K$ ($K$ is any real number), then given a point $p\in M$, for sufficiently small $r>0$, the area of geodesic spheres satisfies $A(\partial B_p(r))\leq A(\partial B_{N_K}(r))$,  where $N_K$ denotes the rescaled complex space form with $Ric = K$. The equality holds iff the $M$ is locally isometric to $N_K$.}
\end{theorem}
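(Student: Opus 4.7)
The plan is to compare the Taylor expansions of $A(\partial B_p(r))/r^{2m-1}$ and $A(\partial B_{N_K}(r))/r^{2m-1}$ at $r=0$ and use real analyticity for the equality case. Starting from Gray's classical expansion in geodesic normal coordinates,
\[
A(\partial B_p(r))=\omega_{2m-1}\,r^{2m-1}\Bigl[1-\tfrac{S(p)}{12m}\,r^{2}+c_{4}(p)\,r^{4}+O(r^{6})\Bigr],
\]
where $c_{4}(p)=\tfrac{1}{1440\,m(m+1)}\bigl(5S^{2}+8|Ric|^{2}-3|R|^{2}-18\Delta S\bigr)(p)$, I compare term by term with the analogous expansion on $N_K$, where $S^{*}=2mK$, $|Ric^{*}|^{2}=2mK^{2}$, $\Delta S^{*}=0$, and $|R^{*}|^{2}$ is the explicit constant determined by the complex space form curvature $R^{*}_{\alpha\bar\beta\gamma\bar\delta}=\tfrac{K}{m+1}(\delta_{\alpha\bar\beta}\delta_{\gamma\bar\delta}+\delta_{\alpha\bar\delta}\delta_{\gamma\bar\beta})$.

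At order $r^{2}$, tracing $Ric\geq Kg$ gives $S(p)\geq 2mK$, so the $M$-coefficient is at most the $N_K$-coefficient. If it is strictly smaller, then $A(\partial B_p(r))<A(\partial B_{N_K}(r))$ for all small $r$ and we are done. Otherwise $Ric(p)=Kg(p)$, and since $p$ is a minimum of the function $S$, $\Delta S(p)\geq 0$. At order $r^{4}$, the common $S^{2}$ and $|Ric|^{2}$ contributions cancel, and the comparison reduces to the scalar inequality $|R|^{2}(p)+6\Delta S(p)\geq |R^{*}|^{2}$. The key algebraic lemma I will establish is: for any K\"ahler algebraic curvature tensor with $Ric=Kg$, $|R|^{2}\geq |R^{*}|^{2}$, with equality iff the tensor equals $R^{*}$. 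To prove it I write $R=R^{*}+W$; the K\"ahler symmetries together with $Ric(W)=0$ force the cross term $\langle R^{*},W\rangle$ to vanish, whence $|R|^{2}=|R^{*}|^{2}+|W|^{2}$.

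If either the $r^{2}$ or $r^{4}$ comparison is strict we obtain the theorem for sufficiently small $r$. Otherwise $R(p)=R^{*}$ and $\Delta S(p)=0$, and I iterate: the coefficient at order $r^{2k}$ involves covariant derivatives of $R$ up to order $2k-2$ at $p$, and an analogous tensorial argument (using the vanishing conditions inherited from lower orders) pins down successive jets of $R$. For the equality case of the theorem, $A(\partial B_p(r))\equiv A(\partial B_{N_K}(r))$ for all small $r>0$ forces every Taylor coefficient to match, so the full Taylor series of the metric at $p$ in normal coordinates agrees with that of $N_K$; real analyticity then promotes this to a local isometry. The main obstacle is the algebraic rigidity lemma in the $r^{4}$ step, which uses the full strength of the K\"ahler symmetries (not just the Einstein condition), together with the iterative bookkeeping at orders $r^{2k}$, $k\geq 3$, needed to match successive jets of the curvature tensor cleanly.
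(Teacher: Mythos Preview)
The paper does not prove Theorem~\ref{thm7}; it is quoted from the author's separate work \cite{[L]} and used only to contextualize the example in Section~5. So there is no in-paper proof to compare against directly.

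On its own merits, your approach via Gray's expansion is natural, and your analysis at orders $r^{2}$ and $r^{4}$ is correct. The algebraic lemma---that for a K\"ahler algebraic curvature tensor with $Ric=Kg$ one has $|R|^{2}=|R^{*}|^{2}+|W|^{2}$ because $\langle R^{*},W\rangle$ reduces to Ricci traces of $W$---is exactly where the K\"ahler hypothesis enters, and it is sound.

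The genuine gap is the step you yourself flag: the iteration at orders $r^{2k}$, $k\geq 3$. You assert that ``an analogous tensorial argument \ldots\ pins down successive jets of $R$'', but this is not substantiated and is not obviously true. The $r^{6}$ coefficient in Gray's expansion already involves $|\nabla R|^{2}$, $\Delta|R|^{2}$, $\Delta^{2}S$, and mixed contractions; after imposing $R(p)=R^{*}$ and $\Delta S(p)=0$ from the lower orders, it is not clear that what remains assembles into a nonnegative quantity as it did at order $r^{4}$. The Ricci lower bound constrains $S$ (giving $\nabla S(p)=0$, $\nabla^{2}S(p)\geq 0$), but it does not directly control $\nabla R(p)$ or the non-Ricci components of higher curvature jets. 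Without a concrete mechanism at each order, you have not ruled out that the first nonzero coefficient of $A(\partial B_{p}(r))-A(\partial B_{N_{K}}(r))$ occurs at some $r^{2k}$, $k\geq 3$, with the wrong sign.

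The equality case inherits the same gap: matching all \emph{area} coefficients does not by itself force matching of all \emph{metric} jets in normal coordinates---that passage requires precisely the order-by-order rigidity you have left unproven. So as written, both the inequality and the rigidity statement rest on the unjustified higher-order step.
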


 If we apply theorem \ref{thm7} to the example, then for small $r$, $$A(\partial B_{N^m}(r))\leq A(\partial B_{\mathbb{C}\mathbb{P}^m}(r)).$$

 However, if $r$ lies between $diam(\mathbb{C}\mathbb{P}^m)$
 and $diam(N^m)$, then the inequality does not hold. It is not clear to the author whether the sharp version of theorem \ref{thm1} is true when the Ricci curvature is bounded from below by a negative constant. We can show that along the diagonal of $\mathbb{C}\mathbb{P}^1\times\mathbb{C}\mathbb{P}^1$,  the Laplacian of the distance function is greater than that of $\mathbb{C}\mathbb{P}^2$. However, the Laplacian of the distance function in $\mathbb{C}\mathbb{H}^1\times \mathbb{C}\mathbb{H}^1$ along the diagonal is smaller than that of $\mathbb{C}\mathbb{H}^2$.

\section{\bf{ Gradient estimate}}

\noindent\emph{Proof of theorem \ref{thm4}:}

Let us recall the following theorem due to Yau \cite{[Y]}:
\begin{theorem}\label{thm8}
{Let $M^n$ be a complete Riemannian manifold with Ricci curvature bounded from below by -(n-1). If f is a positive harmonic function on $M$, then
\begin{equation}\label{6.1}
|\nabla \log f| \leq n-1.
\end{equation}}
\end{theorem}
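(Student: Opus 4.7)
The plan is to run Yau's classical Bochner-plus-cutoff argument. Put $h=\log f$ and $w=|\nabla h|^{2}$; since $\Delta f=0$, one immediately has $\Delta h=-w$. The Bochner identity applied to $h$ then reads
\begin{equation*}
\tfrac{1}{2}\Delta w = |\nabla^{2}h|^{2} + \langle \nabla h,\nabla\Delta h\rangle + \mathrm{Ric}(\nabla h,\nabla h),
\end{equation*}
and inserting $\Delta h=-w$ together with the hypothesis $\mathrm{Ric}\geq -(n-1)$ yields
\begin{equation*}
\tfrac{1}{2}\Delta w \geq |\nabla^{2}h|^{2} - \langle \nabla h,\nabla w\rangle - (n-1)w.
\end{equation*}

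The sharp constant $n-1$ rests on a careful lower bound for $|\nabla^{2}h|^{2}$. At a point where $w>0$, I would pick an orthonormal frame with $e_{1}=\nabla h/|\nabla h|$; then $h_{1}=\sqrt{w}$, $h_{i}=0$ for $i>1$, and Cauchy--Schwarz on the off-$e_1$ block of the Hessian gives
\begin{equation*}
|\nabla^{2}h|^{2} \geq h_{11}^{2} + \sum_{i>1}h_{ii}^{2} \geq h_{11}^{2} + \frac{(\Delta h - h_{11})^{2}}{n-1} = h_{11}^{2} + \frac{(w+h_{11})^{2}}{n-1}.
\end{equation*}
On its own this only reproduces $|\nabla^{2}h|^{2}\geq w^{2}/n$ after freely optimising in $h_{11}$, but at a maximum point of the auxiliary function $\eta w$ produced by the cutoff argument below, the first order condition forces $h_{11}$ to be of size $O(1/R)$, so the displayed inequality effectively delivers $|\nabla^{2}h|^{2}\geq w^{2}/(n-1)$ in the limit, which is precisely what produces the sharp constant.

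Next I would pick a Li--Yau type cutoff $\eta$ supported in $B_{p}(2R)$, equal to $1$ on $B_{p}(R)$ and depending only on the distance from $p$; the Laplacian comparison under $\mathrm{Ric}\geq -(n-1)$ then gives $|\nabla\eta|^{2}/\eta\leq C/R^{2}$ and $\Delta\eta\geq -C/R - C/R^{2}$. At an interior maximum $x_{0}$ of $\eta w$, the condition $\nabla(\eta w)(x_{0})=0$ yields $\eta\nabla w = -w\nabla\eta$, which simultaneously converts the cross term $\langle\nabla h,\nabla w\rangle$ into $-w\langle\nabla h,\nabla\eta\rangle/\eta$ and pins down $h_{11}(x_{0}) = -\sqrt{w}\,\nabla_{1}\eta/(2\eta)$, both of order $1/R$. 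Combining this with $\Delta(\eta w)(x_{0})\leq 0$, the Bochner inequality multiplied by $2\eta$, the refined Hessian bound above, and a Young inequality to absorb the $\langle\nabla h,\nabla\eta\rangle$ term, I arrive at
\begin{equation*}
(\eta w)(x_{0}) \leq (n-1)^{2} + O(1/R).
\end{equation*}

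The main obstacle, and the reason Yau's argument is delicate, is threading the constants through the Young step so that exactly the factor $(n-1)^{2}$ survives rather than a larger multiple; the refined Hessian split above, combined with the forced smallness of $h_{11}$ at the maximum, is what makes this work. Once the local bound is in place, restricting to $B_{p}(R)$, where $\eta\equiv 1$ and hence $\eta w=w$, gives $w\leq (n-1)^{2}+O(1/R)$ on $B_{p}(R)$, and letting $R\to\infty$ — legitimate by completeness of $M$ and the global Ricci lower bound — produces $|\nabla\log f|\leq n-1$ on all of $M$.
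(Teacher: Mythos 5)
The paper does not actually prove this statement: Theorem \ref{thm8} is quoted as Yau's gradient estimate and cited to \cite{[Y]}, and is then used as a black-box input (it is what gives (\ref{6.6})) in the proof of Theorem \ref{thm4}. So there is no in-paper argument to compare against, and what you have supplied is a proof of the classical result itself. Your sketch is the standard, correct Bochner--cutoff--maximum-principle argument: the refined Hessian bound $|\nabla^{2}h|^{2}\geq h_{11}^{2}+(w+h_{11})^{2}/(n-1)$, combined with the fact that the first-order condition at the maximum of $\eta w$ makes $h_{11}$ negligible, is exactly what upgrades the crude constant $\sqrt{n(n-1)}$ (which the unrefined bound $|\nabla^{2}h|^{2}\geq w^{2}/n$ would give) to the sharp $n-1$, and letting the Young-inequality parameter degenerate with $R$ preserves the constant in the limit. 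Two standard technicalities should be made explicit to close the argument: (i) the maximum of $\eta w$ may occur where the distance function, hence $\eta$, fails to be smooth (cut locus), which is handled by Calabi's trick or by using the barrier/support-function version of the Laplacian comparison; (ii) at the maximum one has $|h_{11}|=\sqrt{w}\,|\nabla\eta|/(2\eta)$, which is not literally $O(1/R)$ where $\eta$ is small, so one must take a cutoff with $|\nabla\eta|^{2}/\eta\leq C/R^{2}$ and check that the offending terms, after being weighted by the appropriate power of $\eta$, contribute only $O(1/R)$. With those caveats your proof is complete and agrees with the argument in \cite{[Y]} (see also Li's lecture notes \cite{[LI]}).
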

Set $n = 2m$, $h = \log f$. By direct computation, we find
\begin{equation}\label{6.2}
\Delta h = -|\nabla h|^2.
\end{equation}

 At a point $p \in M$ such that $\nabla h \neq 0$, choose an orthonormal frame $\{d_1,...d_n\}$ near $p$ such that $d_1=\frac{\nabla h}{|\nabla h|}$; $Jd_{2\alpha-1}=d_{2\alpha}$ for $1\leq \alpha \leq m$. Define a unitary frame $\{e_{\alpha}\}$($\alpha = 1,...m$) so that $e_{\alpha} = \frac{1}{\sqrt{2}}(d_{2\alpha-1} - \sqrt{-1}d_{2\alpha})$ for all $\alpha$.

 Using the Bochner formula, we compute
  \begin{equation}\label{6.3}
\begin{aligned}
\Delta |\nabla h|^2 &= 2h^2_{ij} + 2Ric(\nabla h, \nabla h) + 2\langle \nabla h, \nabla \Delta h\rangle
\\&\geq 2\sum\limits_{i\neq j}h^2_{ij}+2h_{11}^2 + \frac{2(\Delta h - h_{11})^2}{n-1} + 2\sum\limits_{i\neq1}(\frac{\Delta h - h_{11}}{n-1}-h_{ii})^2\\&-2(n-1)|\nabla h|^2 - 2
\langle \nabla h, \nabla |\nabla h|^2\rangle\\& = 2\sum\limits_{i\neq j}h^2_{ij}+2h_{11}^2 + 2\sum\limits_{i\neq1}(\frac{\Delta h - h_{11}}{n-1}-h_{ii})^2\\&+\frac{2}{n-1}(|\nabla h|^4 + 2|\nabla h|^2h_{11}+h_{11}^2)-2(n-1)|\nabla h|^2 - 2
\langle \nabla h, \nabla |\nabla h|^2\rangle \\&= 2\sum\limits_{i\neq j}h^2_{ij}+\frac{2n}{n-1}h_{11}^2
+ 2\sum\limits_{i\neq1}(\frac{\Delta h- h_{11}}{n-1}-h_{ii})^2\\&+\frac{2}{n-1}|\nabla h|^4-2(n-1)|\nabla h|^2
-\frac{2n-4}{n-1}\langle \nabla h, \nabla |\nabla h|^2\rangle.
\end{aligned}
\end{equation}

In the computation above, we have used the fact
\begin{equation}\label{6.4}
\langle \nabla h, \nabla |\nabla h|^2\rangle = h_i(h_j^2)_i=2|\nabla h|^2h_{11}.
\end{equation}

Now we define
\begin{equation}\label{6.5}
\begin{aligned}
u = 2\sum\limits_{i\neq j}h^2_{ij}+\frac{2n}{n-1}&h_{11}^2
+ 2\sum\limits_{i\neq1}(\frac{\Delta h- h_{11}}{n-1}-h_{ii})^2 \geq 0,\\&
g = |\nabla h|^2.
\end{aligned}
\end{equation}

Theorem \ref{thm8} says that
\begin{equation}\label{6.6}
0 \leq g \leq (n-1)^2.
\end{equation}

We may write (\ref{6.3}) as
\begin{equation}\label{6.7}
\begin{aligned}
\Delta g &\geq u + \frac{2}{n-1}g^2-2(n-1)g - \frac{2n-4}{n-1}\langle \nabla h, \nabla g\rangle
\\& = u + \frac{2}{n-1}g(g-(n-1)^2) -\frac{2n-4}{n-1}\langle \nabla h, \nabla g\rangle \\& \geq u + 2(n-1)(g-(n-1)^2)- \frac{2n-4}{n-1}\langle \nabla h, \nabla g\rangle.
\end{aligned}
\end{equation}
In the second inequality we have used (\ref{6.6}).
Define a new function
\begin{equation}\label{6.8}
w = (n-1)^2 - g,
\end{equation}
then
\begin{equation}\label{6.9}
0 \leq w \leq (n-1)^2.
\end{equation}
Moreover, $w$ satisfies the inequality
$$-\Delta w \geq u - 2(n-1)w + \frac{2n-4}{n-1}\langle \nabla h, \nabla w\rangle,$$
that is,
\begin{equation}\label{6.10}
\Delta w + \frac{2n-4}{n-1}\langle \nabla h, \nabla w\rangle +u \leq 2(n-1)w.
\end{equation}

Let us invoke a theorem in \cite{[LI]}, page 76, which is proved by the standard Di Georgi-Nash-Moser iteration:
\begin{theorem}\label{thm9}
{ Let $M^n$ be a complete Riemannian manifold with $Ric \geq k$. Let $p$ be a point in $M$. If $f$ is a nonnegative function on $M$ satisfying the inequality $$\Delta f \leq Af$$ for some constant $A \geq 0$, then there exist positive constants $\lambda, C$ depending only on $r, A, k, n$ such that $$(\dashint_{B_p(r)}f^{\lambda})^{\frac{1}{\lambda}} \leq C \inf\limits_{B_p{(\frac{r}{16})}}f.$$}
\end{theorem}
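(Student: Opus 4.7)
\textbf{Plan of proof of Theorem \ref{thm9}.} The statement is a standard weak Harnack inequality for nonnegative functions satisfying $\Delta f \le Af$, and the natural route is the De Giorgi--Nash--Moser iteration scheme adapted to a Riemannian manifold with Ricci lower bound. The analytic inputs we need are all consequences of $\mathrm{Ric}\ge k$: the Bishop--Gromov volume doubling on $B_p(r)$, the Buser/Poincar\'e inequality on metric balls, and the resulting local $L^{2}$ Sobolev inequality of Saloff-Coste type. After replacing $f$ by $f+\epsilon$ (allowed since $\Delta(f+\epsilon)\le Af\le A(f+\epsilon)$) we may assume $f>0$, perform all computations, and at the end let $\epsilon\downarrow 0$.

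\textbf{Step 1: Caccioppoli estimates for powers $f^{q}$.} For any $\beta\neq 1$ and any nonnegative cutoff $\eta\in C^{\infty}_{c}(B_p(r))$, multiply $\Delta f-Af\le 0$ by $\eta^{2}f^{-\beta}$ (legitimate since $f>0$) and integrate by parts. Absorbing the cross term via Cauchy--Schwarz yields
\begin{equation*}
\int \eta^{2}f^{-\beta-1}|\nabla f|^{2}\;\le\; C(\beta)\int|\nabla\eta|^{2}f^{-\beta+1}\;+\;C(\beta,A)\int\eta^{2}f^{-\beta+1},
\end{equation*}
which, setting $q=(1-\beta)/2$, is an energy estimate of the form
\begin{equation*}
\int\eta^{2}|\nabla f^{q}|^{2}\;\le\; C(q)\!\int|\nabla\eta|^{2}f^{2q}\;+\;C(q,A)\!\int\eta^{2}f^{2q},
\end{equation*}
valid for every $q\neq 0$.

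\textbf{Step 2: Moser iteration toward $\inf f$.} Feed $u=\eta f^{q}$ with $q<0$ into the local $L^{2}$ Sobolev inequality on $B_p(r)$, whose constants depend only on $n,k,r$. Combining this with the Caccioppoli inequality of Step 1 gives a reverse H\"older step $\|f^{q}\|_{L^{2\chi}(B_{r'})}\le C(q,A,k,n,r)\|f^{q}\|_{L^{2}(B_{r''})}$ on pairs of nested balls with $\chi>1$. Iterating over a dyadic sequence of radii shrinking from $r/8$ to $r/16$ and using that $q<0$ reverses the direction of the $L^{p}$-norm as $p\to -\infty$, one obtains, for every $p_{0}>0$,
\begin{equation*}
\Bigl(\dashint_{B_p(r/8)}f^{-p_{0}}\Bigr)^{-1/p_{0}}\;\le\; C\,\inf_{B_p(r/16)}f.
\end{equation*}

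\textbf{Step 3: Bridging negative to positive powers via John--Nirenberg.} The borderline exponent $\beta=1$ (i.e.\ the test function $\eta^{2}/f$) produces the critical identity controlling $|\nabla\log f|$: one obtains $\int\eta^{2}|\nabla\log f|^{2}\le C\int(|\nabla\eta|^{2}+A\eta^{2})$. Coupled with the Poincar\'e inequality on $B_p(r)$, this shows that $v:=\log f$ has bounded mean oscillation on $B_p(r/4)$ with a BMO norm depending only on $r,A,k,n$. The John--Nirenberg inequality, valid in any doubling Poincar\'e space (in particular here by Bishop--Gromov plus Buser), then produces a small $p_{0}=p_{0}(r,A,k,n)>0$ such that
\begin{equation*}
\Bigl(\dashint_{B_p(r/4)}e^{p_{0}v}\Bigr)\Bigl(\dashint_{B_p(r/4)}e^{-p_{0}v}\Bigr)\;\le\; C,
\end{equation*}
i.e.\ $(\dashint f^{p_{0}})^{1/p_{0}}\le C(\dashint f^{-p_{0}})^{-1/p_{0}}$.

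\textbf{Step 4: Conclusion and passage to $B_p(r)$.} Combining Step 3 with Step 2 for the exponent $p_{0}$ gives
\begin{equation*}
\Bigl(\dashint_{B_p(r/4)}f^{p_{0}}\Bigr)^{1/p_{0}}\;\le\; C\,\inf_{B_p(r/16)}f.
\end{equation*}
Finally, a covering of $B_p(r)$ by finitely many balls of the form $B_p(r/4)$, the number of which is bounded using Bishop--Gromov volume doubling in terms of $n,k,r$, upgrades this to the statement on $B_p(r)$ with $\lambda=p_{0}$, completing the proof. The main technical obstacle is not the iteration itself, which is mechanical once the ingredients are in place, but the careful bookkeeping of how the Caccioppoli constants depending on $\beta$ deteriorate as one iterates, together with verifying that the Saloff-Coste local Sobolev constant and the John--Nirenberg constant depend only on the allowed parameters $r,A,k,n$.
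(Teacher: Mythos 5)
The paper does not actually prove Theorem \ref{thm9}; it imports it from P. Li's lecture notes \cite{[LI]} (page 76) with the remark that it follows from the standard De Giorgi--Nash--Moser iteration, and your outline is exactly that standard iteration (Caccioppoli estimates for powers, downward iteration through negative exponents, the logarithm/John--Nirenberg step to cross from positive to negative powers, and a covering--chaining step), so it matches the intended source. One small correction: for a supersolution the Caccioppoli inequality of Step 1 comes out with the good sign only when $\beta>0$, i.e.\ $q<\tfrac12$, not for ``every $q\neq 0$'' (for $\beta<0$ the gradient term cannot be absorbed), but since you only use $q<0$ and the borderline case $\beta=1$, this does not affect the argument.
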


We would like to apply theorem \ref{thm9} to the function $w$ in (\ref{6.10}). The situation is a little bit different: there is a first order term in (\ref{6.10}). However, the coefficient of the first order term in (\ref{6.10}) is bounded, theorem \ref{thm9} works for our case. Therefore we have
\begin{equation}\label{6.11}
(\dashint_{B_p(r)}w^{\lambda})^{\frac{1}{\lambda}} \leq C \inf\limits_{B_p{(\frac{r}{16})}}w.
\end{equation}

Define a cut-off function $\varphi$ depending only on the distance to $p$, given by
\begin{equation}\label{6.12}
\varphi(r) = \left\{
\begin{array}{rl} 1 & 0 \leq r \leq 1 \\
2-r & 1< r < 2 \\ 0 & r\geq 2.
\end{array}\right.
\end{equation}

Multiplying (\ref{6.10}) on both side by $\varphi^2 w^{-\frac{1}{3}}$, after the integration,
 we get
$$\int \varphi^2 w^{-\frac{1}{3}}\Delta w + \frac{2n-4}{n-1}\langle \nabla h, \nabla w\rangle w^{-\frac{1}{3}}\varphi^2
 + uw^{-\frac{1}{3}}\varphi^2 \leq 2(n-1)\int w^{\frac{2}{3}}\varphi^2.$$

Integration by parts gives
 \begin{equation}\label{6.13}
\begin{aligned}
2(n-1)\int w^{\frac{2}{3}}\varphi^2 &\geq \int uw^{-\frac{1}{3}}\varphi^2 - \int \langle \nabla(\varphi^2w^{-\frac{1}{3}}), \nabla w\rangle + \frac{3(2n-4)}{n-1}\int \langle \nabla h, \nabla w^{\frac{1}{3}}\rangle w^{\frac{1}{3}}\varphi^2 \\& = \int uw^{-\frac{1}{3}}\varphi^2 - 6\int \varphi w^{\frac{1}{3}}
\langle \nabla \varphi, \nabla w^{\frac{1}{3}}\rangle + 3\int \varphi^2|\nabla w^{\frac{1}{3}}|^2 \\&+
\frac{3(2n-4)}{n-1}\int \langle \nabla h, \nabla w^{\frac{1}{3}}\rangle w^{\frac{1}{3}}\varphi^2.
\end{aligned}
\end{equation}

Using Schwartz inequality, we find
\begin{equation}\label{6.14}
\begin{aligned}
&- 6\int \varphi w^{\frac{1}{3}}
\langle \nabla \varphi, \nabla w^{\frac{1}{3}}\rangle \geq -\delta \int \varphi^2|\nabla w^{\frac{1}{3}}|^2 - \frac{9}{\delta}\int |\nabla \varphi|^2 w^{\frac{2}{3}},\\&
\frac{3(2n-4)}{n-1}\int \langle \nabla h, \nabla w^{\frac{1}{3}}\rangle w^{\frac{1}{3}}\varphi^2 \geq
-\delta \int \varphi^2|\nabla w^{\frac{1}{3}}|^2 - \frac{C_1}{\delta}\int |\nabla h|^2\varphi^2 w^{\frac{2}{3}}.
\end{aligned}
\end{equation}
where $C_1$ is a constant depending only on $n$.

We take $\delta = 1$. Noting that $|\nabla h| \leq n - 1, |\nabla \varphi| \leq 2$, we yield from (\ref{6.13}) and (\ref{6.14}) that
\begin{equation}\label{6.15}
 \begin{aligned}
C_2\int_{B_p(2)} w^{\frac{2}{3}} &\geq \int_{B_p(1)}uw^{-\frac{1}{3}}\\&
\geq (n-1)^{-\frac{2}{3}}\int_{B_p(1)}u.
\end{aligned}
\end{equation}
where $C_2$ is a positive constant depending only on $n$.
Using (\ref{6.9}), (\ref{6.11}), (\ref{6.15}) and the relative volume comparison theorem, we find
\begin{equation}\label{6.16}
\dashint_{B_p(1)} u \leq C_3(w(p))^{\alpha}
\end{equation}
where $C_3, \alpha$ are positive constants depending only on $n$. Following (\ref{6.5}), (\ref{6.16}),
we obtain
\begin{equation}\label{6.17}
\dashint_{B_p(1)}2\sum\limits_{i\neq j}h^2_{ij}+\frac{2n}{n-1}h_{11}^2
+ 2\sum\limits_{i\neq1}(\frac{\Delta h- h_{11}}{n-1}-h_{ii})^2\leq C(n)(w(p))^{\alpha}.
\end{equation}
(\ref{6.2}), (\ref{6.8}), (\ref{6.9}), (\ref{6.11}) imply
\begin{equation}\label{6.18}
\dashint_{B_p(1)}(\Delta h + (n - 1)^2)^2 \leq C(n)(w(p))^{\beta}.
\end{equation}
where $\beta$ is a positive constant depending only on $n$.
(\ref{6.17}) and (\ref{6.18}) imply
\begin{equation}\label{6.19}
\dashint_{B_p(1)}2\sum\limits_{i\neq j}h^2_{ij}+\frac{2n}{n-1}h_{11}^2
+ 2\sum\limits_{i\neq1}(1-n-h_{ii})^2\leq C(n)(w(p))^{\gamma}.
\end{equation}
where $\gamma = \gamma(n) > 0$.
 Now we would like to use the K\"ahler structure of $M$.
 Applying (\ref{1.1}) to $h$, we find
\begin{equation}\label{6.20}
\frac{1}{2}\langle \nabla h, \nabla(\sum\limits_{\gamma \neq 1}h_{\gamma \overline{\gamma}})\rangle = h_{1\overline{1}}\Delta h -|h_{\alpha \overline{\beta}}|^2+ Re (div Y)
\end{equation}
where $Y = \sum\limits_{\gamma \neq 1}h_{\overline{\alpha}}h_{\alpha \overline{\gamma}}e_{\gamma}$.

Suppose at a point $p \in M$,
\begin{equation}\label{6.21}
|\nabla h(p)| > n-1-\epsilon
\end{equation}
 where $\epsilon$ is a very small constant. Then
\begin{equation}\label{6.22}
w(p) \leq C(n)\epsilon.
\end{equation}
Integrating (\ref{6.20}) on the geodesic ball $B_p(r)$, we get
\begin{equation}\label{6.23}
\begin{aligned}
&-\int_{B_p(r)}\Delta h \sum\limits_{\alpha \neq 1}(h_{\alpha\overline{\alpha}}+2m-1) + \frac{1}{2}\int_{\partial B_p( r)} \sum\limits_{\alpha \neq 1}(h_{\alpha\overline{\alpha}}+2m-1)\langle \nabla h, \nabla r\rangle \\&= \int_{B_p(r)} h_{1\overline{1}}\Delta h - |h_{\alpha\overline{\beta}}|^2 + \frac{1}{2}Re\int_{\partial B_p(r)} \sum\limits_{\alpha \neq 1} |\nabla h|h_{1\overline{\alpha}}\langle e_{\alpha}, \nabla r\rangle.
\end{aligned}
\end{equation}

Define the annulus $A = \{x\in M\|\frac{1}{2} \leq d(x, p) \leq 1\}$.
Integrating (\ref{6.23}) with respect to $r$ from $\frac{1}{2}$ to $1$, dividing both side by $Vol(B_p(1))$, we find
\begin{equation}\label{6.24}
\begin{aligned}
&-\int\limits_{\frac{1}{2}}^{1}\frac{\int_{B_p(r)}\Delta h \sum\limits_{\alpha \neq 1}(h_{\alpha\overline{\alpha}}+2m-1)}{Vol(B_p(1))}dr +
\frac{1}{2}\frac{\int_{A} \sum\limits_{\alpha \neq 1}(h_{\alpha\overline{\alpha}}+2m-1)\langle \nabla h, \nabla r\rangle }{Vol(B_p(1))} \\&= \int\limits_{\frac{1}{2}}^{1}\frac{\int_{B(P,r)} h_{1\overline{1}}\Delta h - |h_{\alpha\overline{\beta}}|^2 }{Vol(B_p(1))}dr+ \frac{1}{2}\frac{Re\int_{A} \sum\limits_{\alpha \neq 1} |\nabla h|h_{1\overline{\alpha}}\langle e_{\alpha}, \nabla r\rangle}{Vol(B_p(1))}.
\end{aligned}
\end{equation}

In view of (\ref{6.19}), after the complexification, we obtain
\begin{equation}\label{6.25}
\dashint_{B_p(1)}\sum\limits_{\alpha\neq \beta}|h_{\alpha\overline\beta}|^2+(h_{1\overline 1}+\frac{2m-1}{2})^2
+ \sum\limits_{\alpha\neq1}(1-2m-h_{\alpha\overline\alpha})^2\leq C(n)\epsilon^{\gamma}.
\end{equation}
Following (\ref{6.25}) and the relative volume comparison, we see that up to a negligible error, we can replace the complex hessian of $h$ in (\ref{6.24}) by the corresponding constants in (\ref{6.25}). Explicitly,
\begin{equation}\label{6.26}
h_{\alpha\overline\beta} \rightarrow \left\{
\begin{array}{rl}  0 &  \alpha \neq \beta \\ 1-2m & \alpha=\beta, \alpha \neq 1 \\
\frac{1-2m}{2} & \alpha = \beta = 1 \\
\end{array} \right.
\end{equation}

In order to get a contradiction to (\ref{6.21}), it suffices to find a gap between the LHS and the RHS of (\ref{6.24}) if we replace $h_{\alpha\overline\beta}$ by (\ref{6.26}).
Plugging (\ref{6.26}) in (\ref{6.24}), the LHS is 0, the RHS is
\begin{equation}\label{6.27}
\begin{aligned}
&(\frac{1-2m}{2}(\frac{1-2m}{2}+(m-1)(1-2m)) - (\frac{1-2m}{2})^2\\&-(m-1)(1-2m)^2)\int\limits_{\frac{1}{2}}^{1}\frac{Vol(B(P, r))}{Vol(B(P, 1))}dr\\&=-\frac{(2m-1)^2(m-1)}{2}\int\limits_{\frac{1}{2}}^{1}\frac{Vol(B(P, r))}{Vol(B(P, 1))}dr \\&\leq -\frac{(2m-1)^2(m-1)}{2}C(n)
\end{aligned}
\end{equation}
where $C(n)$ is a positive constant depending only on $n$.

The proof of theorem \ref{thm4} is complete.
\qed

  \end{document}